\documentclass[10pt]{amsart}
\usepackage{latexsym}
\usepackage{amsmath}
\usepackage{amssymb}
\usepackage{mathrsfs}
\usepackage{graphicx}
\usepackage{color}
\usepackage{pgfpages}
\usepackage{ifthen}
\usepackage{leftidx,tensor}
\usepackage[T1]{fontenc}
\usepackage[utf8]{inputenc}
\usepackage{mathtools}
\usepackage{comment}
\usepackage{dsfont}
\usepackage[nocompress]{cite}
\usepackage{newunicodechar}
\newunicodechar{，}{,}

\usepackage[shortlabels]{enumitem}
\usepackage{aliascnt}
\usepackage[bookmarks=true,pdfstartview=FitH, pdfborder={0 0 0}, colorlinks=true,citecolor=red, linkcolor=blue]{hyperref}
\usepackage{bbm}
\usepackage{nicefrac}

\theoremstyle{plain}
\newtheorem{thm}{Theorem}[section]
\newaliascnt{cor}{thm}
\newaliascnt{prop}{thm}
\newaliascnt{lem}{thm}

\newtheorem{cor}[cor]{Corollary}

\newtheorem{prop}[prop]{Proposition}
\newtheorem{lem}[lem]{Lemma}
\aliascntresetthe{cor}
\aliascntresetthe{prop}
\aliascntresetthe{lem} 
\theoremstyle{definition}
\newaliascnt{defn}{thm}
\newaliascnt{asu}{thm}
\newaliascnt{con}{thm}

\newtheorem{asu}[asu]{Assumption}

\aliascntresetthe{defn}
\aliascntresetthe{asu}
\aliascntresetthe{con}
\newcounter{stp}
\newcounter{stpi}
\newcounter{stpci}
\newcounter{stpiii}

\newtheorem{step}[stp]{Step}

\theoremstyle{thm}
\newaliascnt{rem}{thm}
\newaliascnt{exa}{thm}
\newaliascnt{masu}{thm}
\newaliascnt{nota}{thm}
\newaliascnt{sett}{thm}
\aliascntresetthe{rem}
\aliascntresetthe{exa}
\aliascntresetthe{masu}
\aliascntresetthe{nota}
\aliascntresetthe{sett}
\numberwithin{equation}{section}

\setlist[enumerate]{font = \normalfont}

\newcommand{\eps}{\varepsilon}
\renewcommand{\bar}[1]{\overline{#1}}

\newcommand{\R}{\mathbb{R}}

\newcommand{\E}{\mathbb{E}}

\newcommand{\rC}{\mathrm{C}}
\newcommand{\rL}{\mathrm{L}}
\newcommand{\rW}{\mathrm{W}}
\newcommand{\rH}{\mathrm{H}}

\newcommand{\rD}{\mathrm{D}}
\newcommand{\rN}{\mathrm{N}}

\newcommand{\rX}{\mathrm{X}}

\newcommand{\rLq}{\rL^q}
\newcommand{\rLp}{\rL^p}

\newcommand{\rd}{\mathrm{d}}

\renewcommand{\rm}{\mathrm{m}}
\newcommand{\rr}{\mathrm{r}}
\newcommand{\rv}{\mathrm{v}}
\newcommand{\rvs}{\mathrm{vs}}
\newcommand{\rc}{\mathrm{c}}
\newcommand{\rev}{\mathrm{ev}}
\newcommand{\rcd}{\mathrm{cd}}
\newcommand{\rac}{\mathrm{ac}}
\newcommand{\rcr}{\mathrm{cr}}
\newcommand{\rth}{\mathrm{th}}
\newcommand{\rcp}{\mathrm{cp}}
\newcommand{\rref}{\mathrm{ref}}
\newcommand{\rpd}{\mathrm{pd}}
\newcommand{\rpv}{\mathrm{pv}}
\newcommand{\rcn}{\mathrm{cn}}

\newcommand{\cL}{\mathcal{L}}

\DeclareMathOperator{\Cof}{Cof}
\newcommand{\Hinfty}{\mathcal{H}^\infty}

\renewcommand{\div}{\mathrm{div} \,}
\newcommand{\divH}{\mathrm{div}_{\rH} \,}
\newcommand{\nablaH}{\nabla_{\rH}}

\newcommand{\del}{\partial}
\newcommand{\dk}[1]{\partial_{#1}}
\newcommand{\dt}{\dk{t}} 
\newcommand{\dz}{\dk{z}} 

\newcommand{\tfor}{\enspace \text{for} \enspace}

\newcommand{\tand}{\enspace \text{and} \enspace}

\newcommand{\T}{\mathbb{T}}

\begin{document}

\title[Moisture dynamics in the hydrostatic Atmosphere]{Moisture Dynamics with Phase Changes in a Compressible Hydrostatic Atmosphere}

\author{Lin Ma}
\address{State Key Laboratory of Mathematical Sciences, Academy of Mathematics and Systems Science,
Chinese Academy of Sciences, Beijing 100190, China}
\email{malin.cnu@foxmail.com}

\author{Tarek Z\"{o}chling}
\address{Technische Universit\"{a}t Darmstadt,
Schlo\ss{}gartenstra{\ss}e 7, 64289 Darmstadt, Germany.}
\email{zoechling@mathematik.tu-darmstadt.de}
\subjclass[2020]{35K59, 35Q35, 35Q86, 76N10}
\keywords{strong well-posedness, global well-posedness for small data, Lagrangian coordinates, moist atmospheric flows, micro-physics moisture model, phase changes, hydrostatic compressible non-isothermal flows}

\maketitle

\begin{abstract}
In this article, a rigorous well-posedness result is established for a moist compressible primitive-equation system with phase changes.
The model couples the compressible primitive equations to a bulk microphysics model for vapor water, cloud water, and rain water, including evaporation, condensation, auto-conversion, collection, and sedimentation effects. 
Global strong well-posedness is proved for initial data sufficiently close to constant equilibria. 
\end{abstract}

\section{Introduction}\label{sec:intro}
\noindent
Atmospheric models that incorporate moisture dynamics and bulk microphysics have a long tradition in fluid dynamics and atmospheric science. In the hydrostatic regime, the pressure gradient is balanced by gravity in the vertical direction, that is
\[
    \partial_z p = -\rho g.
\]
Here $p$ denotes
the pressure, $\rho$ the density, and $g$ the gravitational constant. 
This makes the compressible setting particularly delicate, since pressure, density, temperature, and moisture are coupled both through the equation of state and through the hydrostatic constraint. Therefore, rigorous analytical results for moist, hydrostatic, compressible models are so far only available either in pressure coordinates, where the pressure variable is posed on a fixed interval and an evolving surface pressure is not treated directly, or for moisture systems with prescribed velocity fields; see, for instance, \cite{CZT:12,CZFTT:13,BCZT:14,CZHKTZ:15,HKLT:17,HKLT:20,HKLT:23}. The purpose of this paper is to establish a well-posedness result for a moist, hydrostatic, compressible atmosphere with phase changes, without passing to pressure coordinates and without prescribing the velocity field.

\smallskip

\noindent
Let us briefly explain the structural difficulty caused by combining hydrostatic
balance with compressibility in the presence of moisture. 
Denoting by $\rho_{\rd}$, $\rho_{\rv}$, $\rho_{\rc}$ and $\rho_{\rr}$ the densities of dry air, water vapor, cloud water and rain water, we introduce the corresponding mixing ratios $q_j$ by
\(
q_j \coloneqq \nicefrac{\rho_j}{\rho_{\rd}} \text{ for } j\in\{\rv,\rc,\rr\}.
\)
Thus, the total moist-air density is given by
\(
    \rho
    =
    \rho_{\rd}(1+q_{\rv}+q_{\rc}+q_{\rr}).
\)
With physical constants normalized, the pressure is determined by the moist ideal gas law
\(
    p
    =
    \rho_{\rd}(1+q_{\rv})T ,
\)
which implies that the hydrostatic balance takes the form
\[
    \del_z p
    =
    -\frac{p(1+q_{\rv}+q_{\rc}+q_{\rr})}{(1+q_\rv) T} .
\]
Here only the gaseous water component, namely water vapor, contributes to the pressure through the equation of state, whereas cloud water and rain water contribute to the total moist-air density and therefore enter the hydrostatic balance; cf. \cite{HK:18,DKLT:24,BHMZ:26a,BHMZ:26b}. This special structure shows that the pressure evolution is influenced not only
by the temperature, as in the dry non-isothermal case treated in \cite{TZ:26},
but also by all mixing ratios \(q_{\rv},q_{\rc}\) and $q_{\rr}$. Indeed, solving with respect to the vertical variable, yields
\begin{equation*}
    p(t,x,y,z)
    =
    p_s(t,x,y)\,
    \exp\Bigl(
        -\int_0^z
        \frac{(1+q_{\rv}+q_{\rc}+q_{\rr})}{(1+q_{\rv})T}(t,x,y,\eta)\,\rd\eta
    \Bigr),
\end{equation*}
where $p_s$ denotes the pressure evaluated at $z=0$. Inserting this expression for the pressure into the moist ideal gas law, one obtains a representation of the form
\[
    \rho_{\rd}
    =
    \bar\rho_{\rd}\,
    \hat B(T,q_{\rv},q_{\rc},q_{\rr}),
\]
where \(\bar\rho_{\rd}\) is the vertical average of the dry-air density and
where \(\hat B\) is a \emph{nonlinear, nonlocal} functional of the temperature and
moisture variables, given by
\begin{equation*}
    \hat B(T,q_{\rv},q_{\rc},q_{\rr})
    :=
    \frac{
        \displaystyle
        \frac{1}{T(1+q_{\rv})}
        \exp
        \big (
            -\int_0^z
            \frac{(1+q_{\rv}+q_{\rc}+q_{\rr})}{T(1+q_{\rv})}
            \,\rd\eta
        \big )
    }{
        \displaystyle
        \int_0^1
        \frac{1}{T(1+q_{\rv})}
        \exp
        \big (
            -\int_0^\zeta
            \frac{(1+q_{\rv}+q_{\rc}+q_{\rr})}{T(1+q_{\rv})}
            \,\rd\eta
        \big )
        \,\rd\zeta
    }.
\end{equation*} 
This representation is the first key structural ingredient. It allows us to replace the full dry-air density equation by an equation for the vertically averaged dry-air density and, at the same time, yields an explicit formula for the vertical velocity $w$. In the hydrostatic regime, this velocity does not satisfy an evolution equation of its own; rather, it is determined diagnostically through the dry-air continuity equation. This diagnostic character of $w$ is one of the classical features of hydrostatic compressible dynamics and is a major source of analytical difficulty. In the moist setting, this difficulty becomes even more pronounced than in the dry case, since the diagnostic determination of $w$ is coupled not only to the temperature as in \cite{TZ:26}, but also to the moisture variables entering the total density and the hydrostatic balance; see also \autoref{sec:coupled model}.

\smallskip

\noindent
Next, let us also comment on the structure of the moisture dynamics.
The moisture evolution is described by transport--diffusion equations for the mixing ratios. In schematic form,
\[
    D_t q_j-\Delta q_j=S_j \ \text{ for }
    j\in\{\rv,\rc,\rr\},
\]
where
\(
    D_t=\partial_t+v\cdot\nablaH+w \, \partial_z
\)
denotes the material derivative,
with $u=(v,w)$ being the air velocity, separated into its horizontal component $v$ and its vertical component $w$. The source terms \(S_j\) account for phase transitions between the moisture species. The dominant mechanisms are condensation and evaporation (vapor--cloud exchange), autoconversion and accretion (cloud-to-rain conversion), and sedimentation (vertical fall of rain relative to the air).
A key analytical difficulty is that these phase transitions are activated only in certain thermodynamic regimes. As a result, the sources involve positive-part ``switch terms,'' such as 
\[
    (q_{\rv}-q_{\rvs})^+,
    \quad
    (q_{\rvs}-q_{\rv})^+ \ \text{ and } \ 
    (q_{\rc}-1)^+,
\]
where \(q_{\rvs}\) denotes the saturation mixing ratio. Here $f^+$ denotes the positive part a function, that is $f^+ =\max \{ f,0\} $. The switch terms have two main effects. On the one hand, they strongly couple the moisture variables to the
thermodynamic equation, in particular through the different heat capacities of
dry air, water vapor, and liquid water. On the other hand, they are only
Lipschitz continuous and therefore not Fr\'echet differentiable. This lack of
differentiability is reflected directly in the choice of the solution spaces.
Indeed, if the switch terms are treated as forcing terms, then they cannot be
expected to have two spatial derivatives in \(\rL^2\). For instance, for a
positive-part contribution one only has, in general,
\[
    \nabla(q^+)
    =
    \mathbf 1_{\{q>0\}}\nabla q
    \quad\text{a.e.},
\]
whereas second derivatives would produce derivatives of the characteristic
function of the active set. Thus the switch terms do not naturally belong to
\(\rH^2(\Omega)\) in space.
Consequently, the right-hand sides have to be estimated in spaces with only
\(\rH^1\)-regularity in space. This lowering of the spatial regularity creates
an additional difficulty in the nonlinear estimates. In three space dimensions,
\(\rH^1(\Omega)\) is not an algebra with respect to pointwise multiplication.
Therefore, products cannot be controlled by a simple algebra estimate. Compared
to the dry non-isothermal case treated in \cite{TZ:26}, the nonlinear estimates
become substantially more delicate and require a careful use of product
estimates in mixed space-time norms; see also \autoref{lem: est nonlinear moist}.
\smallskip

\noindent
Let us now comment on previous work and categorize our result.
For general background on moisture dynamics in atmospheric models, we refer to the survey article of Stevens \cite{Ste:05}, the article by Pauluis, Czaja and Korty \cite{PCK:08}, and the monograph by Khouider \cite{Kho:19}, as well as the references therein. 
The microphysical model considered here is based on the description of warm clouds in terms of vapor water, cloud water, and rain water. 
From a modeling point of view, this line goes back in particular to the work of Kessler \cite{Kes:69} and to the cloud model of Grabowski and Smolarkiewicz \cite{GS:96}. 
Building on these ideas, Klein and Majda \cite{KM:06} developed a systematic multiscale model for moist deep convection on mesoscales, involving a bulk microphysics closure for the interaction of vapor, cloud water, and rain water. 
This model was later refined by Hittmeir and Klein \cite{HK:18}, where a more detailed thermodynamic setting was introduced. 
In particular, the phase changes between the different forms of water are coupled to the temperature equation through latent heat effects, leading to a substantially more strongly coupled system.

\smallskip

\noindent
The rigorous mathematical analysis of moisture models with phase changes has developed in several directions. 
For models involving a single moisture quantity and saturation effects, we refer to the works of Coti-Zelati and Temam \cite{CZT:12}, Coti-Zelati, Fr\'emond, Temam and Tribbia \cite{CZFTT:13}, Bousquet, Coti-Zelati and Temam \cite{BCZT:14}, and Coti-Zelati, Huang, Kukavica, Temam and Ziane \cite{CZHKTZ:15}. 
These works already address important analytical difficulties caused by saturation and condensation effects in atmospheric models. 
Another class of reduced moist atmospheric models was introduced by Frierson, Majda and Pauluis \cite{FMP:04} in the context of large-scale precipitation fronts in the tropical atmosphere. 
The corresponding mathematical analysis was carried out by Majda and Souganidis \cite{MS:10} in a weak formulation, while Li and Titi \cite{LT:16} proved global well-posedness and a relaxation limit for a related tropical atmosphere model with moisture. 
Further reduced models involving clouds and phase changes have recently been studied by Zhang, Smith and Stechmann \cite{ZSS:21}, who numerically investigated the effect of phase changes on fast-wave averaging, and by Remond-Tiedrez, Smith and Stechmann \cite{RTSS:24a,RTSS:24b}, who analyzed models arising in precipitating quasi-geostrophic and moist Boussinesq dynamics.

\smallskip

\noindent
The rigorous analysis of the three-component moisture model of Klein and Majda \cite{KM:06}, and later of its thermodynamically refined version due to Hittmeir and Klein \cite{HK:18}, started only recently. 
In \cite{HKLT:17}, Hittmeir, Klein, Li and Titi proved global strong well-posedness for the nonlinear moisture dynamics with phase changes under the assumption that the velocity field is prescribed. 
The same authors subsequently coupled the moisture model to the viscous incompressible primitive equations in \cite{HKLT:20}. 
This result relies on the hydrostatic primitive-equation structure and builds on the fundamental global well-posedness theory for the primitive equations due to Cao and Titi \cite{CT:07}. 
The thermodynamically refined moisture model from \cite{HK:18} was then treated in \cite{HKLT:23}, again in the case of a prescribed velocity field. 
Thus, in the available rigorous results for the full three-component moisture dynamics with phase changes, either the velocity field is prescribed or the fluid equations are formulated within the hydrostatic primitive-equation framework in pressure coordinates. Concerning the dry compressible primitive equations we refer to \cite{HIRZ:25, LT:20, LT:21} in the isothermal regime as well as \cite{TZ:26} in the heat conducting situation.

\smallskip

\noindent
A different recent direction concerns the coupling of the microphysics model to compressible Navier--Stokes dynamics.
Doppler, Klein, Liu and Titi \cite{DKLT:24} proved local strong well-posedness for the refined moisture model coupled to compressible atmospheric dynamics.
This result was further developed in \cite{BHMZ:26a,BHMZ:26b}, where the corresponding compressible Navier--Stokes--moisture model with phase changes is shown to be globally strongly well-posed close to constant equilibria; the analysis is carried out in an $\rL^p$-$\rL^q$ framework in \cite{BHMZ:26a} and in a Hilbert-space setting in \cite{BHMZ:26b}.

\medskip

\noindent
The article is structured as follows. In \autoref{sec:coupled model}, we collect the
preliminaries and derive the structural
representation of the dry-air density. In \autoref{sec:main results}, we state the
main well-posedness result. In \autoref{sec: Lagrange moist}, we introduce the
moisture-dependent Lagrangian transformation and prove the corresponding
transformation estimates. Using this transformation we state the fully transformed system in \autoref{sec Lagrange model}. In \autoref{sec: local}, we establish the linear
maximal-regularity estimates followed by the nonlinear estimates, and 
auxiliary switch estimates for the moisture equations in \autoref{subsec: nonlinear estimates}. Finally, in
\autoref{sec: proof}, we combine these ingredients and prove global-wellposedness using a contraction mapping argument.

\section{Description of the coupled moisture-fluid model}\label{sec:coupled model}
\noindent
\noindent
The purpose of this section is to introduce the coupled model of the moisture
dynamics and the non-isothermal compressible primitive equations. We work in
physical height coordinates and consider the domain
\[
    \Omega=\T^2\times(0,1),
    \qquad
    \Omega_\tau:=(0,\tau)\times\Omega,
\]
where \(\tau\in(0,\infty]\). We write
\[
    \nablaH:=(\partial_x,\partial_y)^\top \ \text{ and } \
    \divH v:=\nablaH\cdot v
\]
The velocity field is written as
\[
    u=(v,w)\colon \Omega_\tau\to\R^3,
\]
where \(v=(v_1,v_2)\colon\Omega_\tau\to\R^2\) denotes the horizontal velocity
and \(w\colon\Omega_\tau\to\R\) the vertical velocity. The material derivative
is then denoted by
\[
    \rD_t
    :=
    \partial_t+u\cdot\nabla
    =
    \partial_t+v\cdot\nablaH+w\, \partial_z .
\]
Furthermore, denote by \(\rho_\rd\), \(\rho_\rv\), \(\rho_\rc\), and \(\rho_\rr\) the
densities of dry air, water vapor, cloud water, and rain water, respectively.
The corresponding mixing ratios are
\[
    q_\rv=\frac{\rho_\rv}{\rho_\rd},
    \qquad
    q_\rc=\frac{\rho_\rc}{\rho_\rd},
    \qquad
    q_\rr=\frac{\rho_\rr}{\rho_\rd}.
\]
Thus, the total moist-air density is
\[
    \rho
    :=
    \rho_\rd+\rho_\rv+\rho_\rc+\rho_\rr
    =
    \rho_\rd(1+q_\rv+q_\rc+q_\rr).
\]
The terminal fall velocity of rain water is denoted by \(V_\rr\). It is assumed
to be a given smooth function describing the downward sedimentation speed of
rain droplets.
For the potential temperature \(\theta\), the pressure \(p\), and positive
constants \(p_\rref\), \(c_\rpd\), and \(R_\rd\), with \(c_\rpd>R_\rd\), the
absolute temperature is given by
\[
    T
    :=
    \theta
    \Bigl(\frac{p}{p_\rref}\Bigr)^{\frac{\gamma-1}{\gamma}},
    \qquad
    \gamma
    :=
    \frac{c_\rpd}{c_\rpd-R_\rd}
    >1.
\]
The mixed heat capacity is
\[
    c_\nu
    :=
    c_\rpd+c_\rpv q_\rv+c_1(q_\rc+q_\rr),
\]
and we define
\[
    \sigma
    :=
    \Bigl(\frac{c_\rpv}{c_\rpd}R_\rd-R_\rv\Bigr)q_\rv
    +
    \frac{c_1}{c_\rpd}R_\rd(q_\rc+q_\rr).
\]
The latent heat of condensation is assumed to be affine in \(T\), namely
\[
    L(T)
    :=
    L_\rref+(c_\rpv-c_1)(T-T_\rref),
\]
where \(L_\rref\) and \(T_\rref\) are positive reference values.
With these quantities we set
\begin{equation*}
    \begin{aligned}
        Q_\rm
        &\coloneqq 1 + q_\rv + q_\rc + q_\rr,\\
        Q_\rth
        &\coloneqq \frac{c_\nu}{\gamma} + \sigma = \frac{c_\rpd}{\gamma} + \Bigl(\frac{c_\rpv}{\gamma} + \frac{c_\rpv}{c_\rpd} R_\rd - R_\rv\Bigr) q_\rv + \Bigl(\frac{c_1}{\gamma} + \frac{c_1}{c_\rpd}R_\rd\Bigr)(q_\rc + q_\rr),\\
        Q_\rcp
        &\coloneqq \sigma - \frac{R_\rd}{c_\rpd} c_\nu = - R_\rd - R_\rv q_\rv,\\
        Q_1
        &\coloneqq \frac{R_\rv}{R_\rd + R_\rv q_\rv}\Bigl(\sigma - \frac{R_\rd}{c_\rpd} c_\nu\Bigr) + c_\rpv - c_1 = c_\rpv - c_1 - R_\rv, \tand Q_2 \coloneqq L_\rref - (c_\rpv - c_1) T_\rref.
    \end{aligned}
\end{equation*}
The saturation mixing ratio is denoted by
\(
    q_\rvs=q_\rvs(p,T).
\)
As in \cite{HKLT:17}, we assume that there are temperatures
\(0\leq T_A\leq T_B\), given in Kelvin, such that
\[
    q_\rvs(p,T)=0
    \qquad
    \text{for } T\leq T_A \text{ and } T\geq T_B .
\]
Moreover, we
assume that \(q_\rvs\) is non-negative, uniformly bounded, and Lipschitz
continuous as a function of \(p\) and \(T\). For a function \(f\), we
write
\[
    f^+:=\max\{f,0\}.
\]
Finally, we define the horizontal Lam\'e operator by
\[
    \rL_\rH v
    :=
    \mu\Delta v
    +
    (\mu+\lambda)\nablaH\divH v\ \text{ with } \
    \mu>0 \ \text{ and } \ 
    2\mu+\lambda>0.
\]
With this notation, the hydrostatic moist compressible primitive-equation
system in \(\Omega_\tau\) reads
\begin{equation}
    \label{eq:coupled moisture compr PE comp intro}
    \left\{
    \begin{aligned}
        \rD_t\rho_{\rd}+\rho_{\rd}\div u
        &=0,
        \\
        \rho_\rd Q_\rm \rD_t v-\rL_\rH v+\nablaH p
        &=
        \rho_{\rd}q_{\rr}V_{\rr}\partial_z v,
        \\
        \partial_z p
        &=
        -\rho_{\rd}Q_{\rm}g,
        \\
        Q_{\rth}\rD_t T-\kappa \Delta T
        &=
        c_1 q_{\rr}V_{\rr}\partial_z T
        +
        Q_{\rcp}T\div u
        -
        (Q_1T+Q_2)(S_\rev-S_\rcd),
        \\
        \rD_t q_{\rv}-\Delta q_{\rv}
        &=
        S_\rev-S_\rcd,
        \\
        \rD_t q_{\rc}-\Delta q_{\rc}
        &=
        S_\rcd-(S_\rac+S_\rcr),
        \\
        \rD_t q_{\rr}-\Delta q_{\rr}
        &=
        \partial_z(q_\rr V_\rr)
        +
        q_\rr V_\rr \partial_z\log\rho_\rd
        +
        (S_\rac+S_\rcr)
        -
        S_\rev,
        \\
        p
        &=
        \rho_{\rd}(R_\rd+R_\rv q_{\rv})T .
    \end{aligned}
    \right.
\end{equation}
Here \(S_\rev\), \(S_\rcd\), \(S_\rac\), and \(S_\rcr\) denote the conversion
terms. More precisely, \(S_\rev\) describes evaporation of rain water,
\(S_\rcd\) condensation of water vapor into cloud water and the inverse process,
\(S_\rac\) auto-conversion of cloud water into rain water, and \(S_\rcr\)
collection of cloud water by falling rain. They are given by
\[
\begin{aligned}
    S_\rev
    &:=
    c_\rev
    \frac{p}{\rho}
    (q_\rvs-q_\rv)^+q_\rr
    =
    c_\rev
    T
    \frac{R_\rd+R_\rv q_\rv}
    {1+q_\rv+q_\rc+q_\rr}
    (q_\rvs-q_\rv)^+q_\rr,
    \\
    S_\rcd
    &:=
    c_\rcd(q_\rv-q_\rvs)q_\rc
    +
    c_\rcn(q_\rv-q_\rvs)^+q_\rcn,
    \\
    S_\rac
    &:=
    c_\rac(q_\rc-q_\rac)^+,
    \qquad
    S_\rcr
    :=
    c_\rcr q_\rc q_\rr .
\end{aligned}
\]
Here \(c_\rev,c_\rcd,c_\rcn,c_\rac,q_\rcn,q_\rac\), and \(c_\rcr\) are positive
constants.
Moist primitive-equation models of this type have been studied extensively in
the literature, see for instance
\cite{CZT:12,CZFTT:13,BCZT:14,CZHKTZ:15,HKLT:17,HKLT:20,HKLT:23}. These works are formulated after a
transformation to pressure coordinates. In particular, the pressure variable is
taken on a fixed interval \(p\in(p_0,p_1)\), so that the lower boundary is
represented by a prescribed pressure level rather than by the evolving physical
surface pressure. In contrast, we work directly in physical height coordinates.

\smallskip

\noindent
For simplicity of notation, and since this does not affect the analytical
structure of the problem, we impose from now on the normalization
\[
    p_\rref=1,
    \qquad
    R_\rd=R_\rv=1,
    \qquad
    c_\rpd=c_\rpv=2,
    \qquad
    c_1=1,
    \qquad
    L_\rref=T_\rref+1,
\]
and
\[
    c_\rev=c_\rcd=c_\rcn=q_\rcn=c_\rac=q_\rac=c_\rcr=1,
    \qquad
    \kappa=1.
\]
Then
\[
    \gamma=2,
    \qquad
    T=\theta p^{1/2},
\]
and
\[
    c_\nu
    =
    2+2q_\rv+q_\rc+q_\rr,
    \qquad
    \sigma
    =
    \frac12(q_\rc+q_\rr).
\]
Consequently,
\[
    Q_\rm
    =
    Q_\rth
    =
    1+q_\rv+q_\rc+q_\rr,
    \qquad
    Q_\rcp
    =
    -(1+q_\rv),
\]
and
\[
    Q_1=0,
    \qquad
    Q_2=1.
\]
The conversion terms reduce to
\[
\begin{aligned}
    S_\rev
    &=
    T
    \frac{1+q_\rv}{Q_\rm}
    (q_\rvs-q_\rv)^+q_\rr,
    \\
    S_\rcd
    &=
    (q_\rv-q_\rvs)q_\rc
    +
    (q_\rv-q_\rvs)^+,
    \\
    S_\rac
    &=
    (q_\rc-1)^+,
    \qquad
    S_\rcr
    =
    q_\rc q_\rr .
\end{aligned}
\]
For further details on the physical origin of these coefficients, we refer to
\cite[Section~1]{DKLT:24}.
Plugging these normalized quantities into
\eqref{eq:coupled moisture compr PE comp intro}, we arrive at the following
system on \(\Omega_\tau\):
\begin{equation}
    \label{eq:coupled moisture compr PE detailed}
    \left\{
    \begin{aligned}
        \partial_t\rho_\rd+\div(\rho_\rd u)
        &=0,
        \\
        \rho_\rd Q_\rm
        \bigl[
            \partial_t v+(u\cdot\nabla)v
        \bigr]
        -\rL_\rH v+\nablaH p
        &=
        \rho_\rd q_\rr V_\rr\partial_z v,
        \\
        \partial_z p
        &=
        -\rho_\rd Q_\rm g,
        \\
        Q_\rm
        \bigl[
            \partial_t T+(u\cdot\nabla)T
        \bigr]
        -\Delta T
        &=
        q_\rr V_\rr\partial_z T
        -(1+q_\rv)T\div u
        \\
        &\quad
        -
        \Bigl[
            T
            \frac{1+q_\rv}{Q_\rm}
            (q_\rvs-q_\rv)^+q_\rr
            -
            (q_\rv-q_\rvs)q_\rc
            -
            (q_\rv-q_\rvs)^+
        \Bigr],
        \\
        \partial_t q_\rv+(u\cdot\nabla)q_\rv-\Delta q_\rv
        &=
        T
        \frac{1+q_\rv}{Q_\rm}
        (q_\rvs-q_\rv)^+q_\rr
        -
        (q_\rv-q_\rvs)q_\rc
        -
        (q_\rv-q_\rvs)^+,
        \\
        \partial_t q_\rc+(u\cdot\nabla)q_\rc-\Delta q_\rc
        &=
        (q_\rv-q_\rvs)q_\rc
        +
        (q_\rv-q_\rvs)^+
        -
        (q_\rc-1)^+
        -
        q_\rc q_\rr,
        \\
        \partial_t q_\rr+(u\cdot\nabla)q_\rr-\Delta q_\rr
        &=
        \frac{1}{\rho_\rd}
        \partial_z(\rho_\rd q_\rr V_\rr)
        +
        (q_\rc-1)^+
        +
        q_\rc q_\rr
        -
        T
        \frac{1+q_\rv}{Q_\rm}
        (q_\rvs-q_\rv)^+q_\rr,
        \\
        p
        &=
        \rho_\rd(1+q_\rv)T .
    \end{aligned}
    \right.
\end{equation}
The model is completed by the boundary conditions
\begin{equation}\label{eq:bc fluid PE}
    \del_z v|_{z=0,1} =
    w |_{z=0,1}= 0,
\end{equation}
concerning the velocity
and we assume homogeneous Neumann boundary conditions for the other quantities, so
\begin{equation}\label{eq:bc heat & moisture PE}
    \del_z T|_{z=0,1} =
    \del_z q_j |_{z=0,1}= 0,
    \tfor j \in \{\rv,\rc,\rr\}.
\end{equation}
Finally, we take into account the initial conditions
\begin{equation}\label{eq:init conds PE}
    \rho_\rd(0) = \rho_{\rd,0},
    \enspace
    v(0) = v_0,
    \enspace
    T(0) = T_0,
    \enspace
    q_\rv(0) = q_{\rv,0},
    \enspace
    q_\rc(0) = q_{\rc,0}
    \tand
    q_\rr(0) = q_{\rr,0},
\end{equation}
which are required to satisfy the boundary conditions \eqref{eq:bc fluid PE} and \eqref{eq:bc heat & moisture PE} provided they are assumed to be sufficiently regular. In this context, let us remark that we use \(Q_{\rm,0}\) in order to denote
\[
Q_{\rm,0} = Q_\rm(0) = 1 + q_{\rv,0} + q_{\rc,0} + q_{\rr,0}.
\]
In the following, we reformulate the system \eqref{eq:coupled moisture compr PE detailed} taking into account the explicit vertical dependence of the pressure $p$ given the hydrostatic balance. 
Specifically,
we combine hydrostatic balance with the equation of state and obtain, assuming non degeneracy of the temperature $T$ and $1+ q_\rv$ that
\[
\del_z p
=
-\frac{gQ_{\rm}}{(1+q_{\rv})T}\,p.
\]
Hence the pressure admits the representation
\begin{equation}
    \label{eq: pressure moist}
    p(t,x,y,z)
    =
    p_s(t,x,y)\,
    \exp\Bigl(
        -\int_0^z
        \frac{gQ_{\rm}}{(1+q_{\rv})T}(\cdot,\eta)\,\rd\eta
    \Bigr),
\end{equation}
where $p_s$ denotes the pressure evaluated at $z=0$, that is, $ p_s(t,x,y)\coloneqq p(t,x,y,0).$
In particular, the pressure at the top boundary, \(p_t(t,x,y)\coloneqq p(t,x,y,1)\), is given by
\begin{equation*}
    p_t(t,x,y)
    =
    p_s(t,x,y)\,
    \exp\Bigl(
        -\int_0^1
        \frac{gQ_{\rm}}{(1+q_{\rv})T}(\cdot,\eta)\,\rd\eta
    \Bigr).
\end{equation*}
Using the equation of state, the dry air density can be written as
\begin{equation}
    \label{eq: density moist}
    \rho_{\rd}(t,x,y,z)
    =
    p_s(t,x,y)\,B(T,q_{\rv},q_{\rc},q_{\rr})(t,x,y,z),
\end{equation}
where the functional \(B\) is defined by 
\begin{equation}
    \label{eq: func B moist}
    B(T,q_{\rv},q_{\rc},q_{\rr})(t,x,y,z)
    \coloneqq
    \frac{1}{(1+q_{\rv}(t,x,y,z))\,T(t,x,y,z)}
    \exp\Bigl(
        -\int_0^z
        \frac{gQ_{\rm}}{(1+q_{\rv})T}(\cdot,\eta)\,\rd\eta
    \Bigr).
\end{equation}
Averaging vertically, we infer
\begin{equation}
    \label{eq: avg density moist}
    \bar{\rho}_{\rd}(t,x,y)
    =
    p_s(t,x,y)\,\bar{B}(T,q_{\rv},q_{\rc},q_{\rr})(t,x,y),
\end{equation}
where $\bar f$ denotes the vertical average of a sufficiently regular function $f$, namely $\bar f = \int_0^1 f(\cdot ,\eta) \rd \eta$.
Next, we introduce the normalized functional \(\hat{B}\), defined by the ration of $B$ and his vertical average $\bar B$, i.e,
\begin{equation}
    \label{eq: hat B moist}
    \hat{B}(T,q_{\rv},q_{\rc},q_{\rr})(t,x,y,z)
    \coloneqq
    \frac{B(T,q_{\rv},q_{\rc},q_{\rr})(t,x,y,z)}
    {\bar{B}(T,q_{\rv},q_{\rc},q_{\rr})(t,x,y)} \ \text{ and }\
    \int_0^1
    \hat{B}(T,q_{\rv},q_{\rc},q_{\rr})(\cdot,\eta)\,\rd\eta
    =1.
\end{equation}
With this notation, the dry air density can be expressed as 
\begin{equation}\label{eq:dry airdensity}
    \rho_{\rd}(t,x,y,z)
    =
    \bar{\rho}_{\rd}(t,x,y)\,
    \hat{B}(T,q_{\rv},q_{\rc},q_{\rr})(t,x,y,z),
\end{equation}
and the pressure is recovered by the ideal gas law.
Motivated by this identity, we vertically average the continuity equation and obtain
\begin{equation}
    \label{eq: avg cont moist}
    \del_t \bar{\rho}_{\rd} + \divH(\bar{\rho}_{\rd}\,b)=0,
    \qquad
    b(t,x,y)
    \coloneqq
    \int_0^1
    \bigl(
        \hat{B}(T,q_{\rv},q_{\rc},q_{\rr})\,v
    \bigr)(\cdot,\eta)\,\rd\eta.
\end{equation}
In the hydrostatic setting, the un-averaged continuity equation is no longer employed as an independent evolution law. Instead, once \(\bar{\rho}_{\rd}\) and \(\hat{B}(T,q_{\rv},q_{\rc},q_{\rr})\) are represented as above, it provides a diagnostic formula for the vertical velocity \(w\) in terms of \(\bar{\rho}_{\rd}\), \(T\), \(q_{\rv}\), \(q_{\rc}\), \(q_{\rr}\), and \(v\). More precisely, using the density representation, we infer that \(w\) is given by
\begin{equation}
    \label{eq:W-moist}
    \begin{aligned}
        \bigl(
            \bar{\rho}_{\rd}\,\hat{B}(T,q_{\rv},q_{\rc},q_{\rr})\,w
        \bigr)(t,x,y,z)
         &=
        -\int_0^z
        \Big[
            \hat{B}(T,q_{\rv},q_{\rc},q_{\rr})\,(v-b)\cdot\nablaH \bar{\rho}_{\rd}
            +\bar{\rho}_{\rd}\,
            \big(
                \del_t\hat{B}(T,q_{\rv},q_{\rc},q_{\rr})
                \\ &\quad+\divH(
                    \hat{B}(T,q_{\rv},q_{\rc},q_{\rr})\,v
                )
                -\hat{B}(T,q_{\rv},q_{\rc},q_{\rr})\divH b
            \big)
        \Big](\cdot,\eta)\,\rd\eta.
    \end{aligned}
\end{equation}
We emphasize that the boundary conditions \(w|_{z=0,1}=0\) are automatically satisfied by this representation whenever \(\bar{\rho}_{\rd}\) solves the averaged continuity equation \eqref{eq: avg cont moist}. In the following, to ease notation, we will write $$B = B(T, q_\rv, q_\rc, q_\rr)\ ,\ \bar{B } = \bar B(T, q_\rv, q_\rc, q_\rr)\ \text{ and }\ \hat{B } = \hat B(T, q_\rv, q_\rc, q_\rr) .$$
With these preparations, the system \eqref{eq:coupled moisture compr PE detailed} can be recast in the following form
\begin{equation}
    \left\{
    \begin{aligned}
        \del_t \bar{\rho}_{\rd} + \divH(\bar{\rho}_{\rd}\,b)
        &=0
        &&\text{ in }\T^2_\tau,\\
        \bar{\rho}_{\rd}\,\hat{B}\,Q_{\rm}
        \bigl[
            \del_t v + (u\cdot\nabla)v
        \bigr]
        -\rL_\rH v
        &=
        \bar{\rho}_{\rd}\,\hat{B}\,q_{\rr}V_{\rr}\del_z v -\nablaH\bigl(
            \bar{\rho}_{\rd}\hat{B}(1+q_{\rv})T
        \bigr)
        &&\text{ in }\Omega_\tau,\\
        Q_{\rm}\bigl[
            \del_t T + (u\cdot\nabla)T
        \bigr]
        -\Delta T
        &=
        q_{\rr}V_{\rr}\del_z T
        -(1+q_{\rv})T \div u\\
        &\quad
        -\Bigl[
            T\cdot \frac{1+q_{\rv}}{Q_{\rm}}(q_{\rvs}-q_{\rv})^+q_{\rr}
            -(q_{\rv}-q_{\rvs})q_{\rc}
            -(q_{\rv}-q_{\rvs})^+
        \Bigr]
        &&\text{ in }\Omega_\tau,\\
        \del_t q_{\rv} + (u\cdot\nabla)q_{\rv} - \Delta q_{\rv}
        &=
        T\cdot \frac{1+q_{\rv}}{Q_{\rm}}(q_{\rvs}-q_{\rv})^+q_{\rr}
        -(q_{\rv}-q_{\rvs})q_{\rc}
        -(q_{\rv}-q_{\rvs})^+
        &&\text{ in }\Omega_\tau,\\
        \del_t q_{\rc} + (u\cdot\nabla)q_{\rc} - \Delta q_{\rc}
        &=
        (q_{\rv}-q_{\rvs})q_{\rc}
        +(q_{\rv}-q_{\rvs})^+
        -(q_{\rc}-1)^+
        -q_{\rc}q_{\rr}
        &&\text{ in }\Omega_\tau,\\
        \del_t q_{\rr} + (u\cdot\nabla)q_{\rr} - \Delta q_{\rr}
        &=
        \frac{1}{\hat{B}}
        \del_z(
            \hat{B}\,q_{\rr}V_{\rr})
        +(q_{\rc}-1)^+ + q_{\rc}q_{\rr}
        -T\cdot \frac{1+q_{\rv}}{Q_{\rm}}(q_{\rvs}-q_{\rv})^+q_{\rr}
        &&\text{ in }\Omega_\tau,\\
        \bar{\rho}_{\rd}(0)
        &=
        \bar{\rho}_{\rd,0},
        \
        v(0)=v_0, \
        T(0)=T_0, \
        q_{j}(0)=q_{j,0} \text{ for } j \in \{ \rv, \rc, \rr \}.
    \end{aligned}
    \right.
    \label{eq:coupled moisture compr PE reformulated}
\end{equation}
In this formulation, the prognostic variables are the averaged dry air density \(\bar{\rho}_{\rd}\), the horizontal velocity \(v\), the temperature \(T\), and the mixing ratios \(q_{\rv}\), \(q_{\rc}\), and \(q_{\rr}\). The remaining quantities are determined diagnostically. Specifically,
the averaged flux is
\[
b
=
\int_0^1
(
    \hat{B}\,v
)(\cdot,\eta)\,\rd\eta,
\]
with $\hat B$ as in \eqref{eq: hat B moist}
and the vertical velocity \(w\) is recovered from \eqref{eq:W-moist}.

\section{Main results}\label{sec:main results}
\noindent
We are now in the position to formulate our main theorem concerning the global, strong well-posedness for initial data close to the constant steady state
\[
(\bar{\rho}_{\rd}^\ast,0,T^\ast,0,0,0)
\]
of the system \eqref{eq:coupled moisture compr PE detailed} subject to the boundary conditions \eqref{eq:bc fluid PE} and \eqref{eq:bc heat & moisture PE} and supplemented by the initial conditions \eqref{eq:init conds PE}. Here, it is assumed that $T^\ast>0$ is chosen in a way that $q_\rvs =0$. We emphasize that regimes where $q_\rvs$ has a gap to $0$ can also be treated, see \autoref{subsec: nonlinear estimates}. Before stating the main theorem, we formulate the assumptions on the initial data.

\begin{asu}\label{assu:data0 moist}
Let \(\tau>0\) and \(\bar{\rho}_{\rd}^\ast,T^\ast>0\). Assume that
\begin{equation*}
    (\bar{\rho}_{\rd,0}, v_0, T_0, q_{j,0})\in \rH^2(\T^2)
   \times \rH^2(\Omega;\R^2) \times \rH^2(\Omega)
   \times 
   \rH^2(\Omega) 
\end{equation*}
and
\begin{equation*}
    \del_z v_0|_{z=0,1}=
    \del_z T_0|_{z=0,1}=
    \del_z q_{j,0}|_{z=0,1}=0\ \text{ for } j\in\{\rv,\rc,\rr\}.
\end{equation*}
Moreover, assume that the first-order compatibility quantities
\(
( \dt \bar \rho_\rd ,
\del_t v,
\del_t T,
\del_t q_j)|_{t=0}
\)
which are given by the respective right-hand sides of \eqref{eq:coupled moisture compr PE reformulated} evaluated at the initial data, satisfy
\[
( \dt \bar \rho_\rd,
\del_t v,
\del_t T,
\del_t q_j)|_{t=0} \in \rH^1(\T^2) \times  \rH^1(\Omega;\R^2) \times \rH^1(\Omega) \times \rH^1(\Omega) \ \text{ for } j\in\{\rv,\rc,\rr\}. 
\]
Finally, for some $\eps>0$ suppose that
\begin{equation*}
 \begin{aligned}
       &\quad \|\bar{\rho}_{\rd,0}-\bar{\rho}_{\rd}^\ast\|_{\rH^2(\T^2)}
    +
    \|v_0\|_{\rH^2(\Omega)}
    +
    \|T_0-T^\ast\|_{\rH^2(\Omega)}
    +
    \sum_{j\in\{\rv,\rc,\rr\}}
    \|q_{j,0}\|_{\rH^2(\Omega)} \\&  +
        \|(\dt \bar \rho_\rd)|_{t=0}\|_{\rH^1(\T^2)}
        +
        \|(\dt v)|_{t=0}\|_{\rH^1(\Omega)}
        +
        \|(\dt T)|_{t=0}\|_{\rH^1(\Omega)}
        +
        \sum_{j\in\{\rv,\rc,\rr\}}
        \|(\dt q_j)|_{t=0}\|_{\rH^1(\Omega)}
    \le \eps^2.
 \end{aligned}
\end{equation*}
\end{asu}
\noindent
The main theorem of this article then reads as follows.
\begin{thm}[Global, strong well-posedness of \eqref{eq:coupled moisture compr PE detailed}] \mbox{} \\
    \label{thm: global WP}
    Let \(\tau>0\) and \(\bar{\rho}_{\rd}^\ast,T^\ast>0\). Assume that the initial data
    \[
    (\bar{\rho}_{\rd,0},v_0,T_0,q_{\rv,0},q_{\rc,0},q_{\rr,0})
    \]
    satisfy \autoref{assu:data0 moist}. Then there exists
    \[
    \eps_0=\eps_0(\tau,\bar{\rho}_{\rd}^\ast,T^\ast,\Omega)>0
    \]
    such that for every \(\eps\in(0,\eps_0)\) the reformulated system
    \eqref{eq:coupled moisture compr PE reformulated} subject to the boundary conditions
    \eqref{eq:bc fluid PE} and \eqref{eq:bc heat & moisture PE} admits a unique strong solution
    \[
    (\bar{\rho}_{\rd},v,T,q_{\rv},q_{\rc},q_{\rr})
    \]
    with
    \begin{equation*}
    \begin{aligned}
        \bar{\rho}_{\rd}
        &\in \rH^{1}(0,\tau;\rH^{2}(\T^2)) \cap \rH^2(0,\tau;\rL^2(\T^2)), \\
        v
        &\in \rH^{1}(0,\tau;\rH^2(\Omega;\R^2)) \cap \rL^2(0,\tau;\rH^{3}(\Omega;\R^2)) \cap \rH^2(0,\tau; \rL^2(\Omega;\R^2)), \\
        T
        &\in \rH^{1}(0,\tau;\rH^2(\Omega)) \cap \rL^2(0,\tau;\rH^{3}(\Omega))\cap \rH^2(0,\tau; \rL^2(\Omega)), \\
        q_j
        &\in \rH^{1}(0,\tau;\rH^2(\Omega)) \cap \rL^2(0,\tau;\rH^{3}(\Omega))\cap \rH^2(0,\tau; \rL^2(\Omega)) \ \text{ for } j\in\{\rv,\rc,\rr\}.
    \end{aligned}
    \end{equation*}
    With \(\hat{B}(T,q_{\rv},q_{\rc},q_{\rr})\) as in \eqref{eq: hat B moist}, define the diagnostic quantities
    \[
    \rho_{\rd}:=\bar{\rho}_{\rd}\,\hat{B}(T,q_{\rv},q_{\rc},q_{\rr}),
    \quad
    p:=\rho_{\rd}(1+q_{\rv})T,
    \ \text{ and }\ 
    w \text{ by } \eqref{eq:W-moist}.
    \]
    Then
    \[
    (\rho_{\rd},u=(v,w),p,T,q_{\rv},q_{\rc},q_{\rr})
    \]
    is a unique, strong solution of the full coupled moisture--compressible primitive equation system
    \eqref{eq:coupled moisture compr PE detailed}. Moreover, for all \((t,x,y,z)\in[0,\tau]\times\Omega\),
    \begin{equation*}
      \frac{\bar{\rho}_\rd^\ast \hat{B}(T^\ast)(z)}{2} \le \rho_\rd(t,x,y,z)
\le \frac{3\,\bar{\rho}_\rd^\ast \hat{B}(T^\ast)(z)}{2} \quad \text{ and } \quad
        \frac{T^\ast}{2}
        \le
        T(t,x,y,z)
        \le
        \frac{3\,T^\ast}{2}.
    \end{equation*}
\end{thm}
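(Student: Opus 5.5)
The proof will be a fixed-point argument in Lagrangian coordinates on the fixed time interval $(0,\tau)$, following the structure announced in the introduction. The main obstacle is the nonlinear estimate for the switch and moisture source terms; the other steps are more standard.

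\emph{Step 1: Lagrangian reformulation.} First I would pass to a moisture-dependent Lagrangian transformation generated by the full velocity $u=(v,w)$. Here $w$ is given by \eqref{eq:W-moist}, so the transformation depends on $\bar\rho_\rd$, $T$, $q_j$ and $v$. In these coordinates the transport terms $(u\cdot\nabla)$ disappear. The system \eqref{eq:coupled moisture compr PE reformulated} then takes the form of a linear system with constant coefficients, linearized at $(\bar\rho_\rd^\ast,0,T^\ast,0,0,0)$, plus nonlinear remainders. The nonlinear remainders come from three sources: the transformation (cofactor matrices, deviations of the Jacobian from the identity), the quasilinear coefficients $\bar\rho_\rd\hat B Q_\rm$ and $Q_\rm$, and the functional $\hat B$. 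I would prove estimates for the transformation of the form
\[
\|\nabla X-\mathrm{I}\|\lesssim \tau^{1/2}\|u\|.
\]
These are bounded in the solution class of \autoref{thm: global WP}. The velocity norm must include $w$, which is controlled through \eqref{eq:W-moist} by $\bar\rho_\rd$, $\partial_t\hat B$ and $\nablaH v$. This is why the solution class requires $\partial_t T,\partial_t q_j\in\rL^2(0,\tau;\rH^2)$.

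\emph{Step 2: linear maximal regularity on $(0,\tau)$.} The linearized problem consists of three parts:
\begin{itemize}
\item a horizontal Lamé parabolic equation for $v$, with pressure forcing $\nablaH(\bar\rho\hat B(T^\ast)T+\ldots)$;
\item heat equations for $T$ and $q_j$ with Neumann conditions;
\item the transport equation $\partial_t\bar\rho+\bar\rho^\ast\divH\bar v=f$.
\end{itemize}
The linearized density equation is integrated in time, so it gains $\rH^1(0,\tau;\rH^2)$ from $\bar v\in\rL^2(0,\tau;\rH^3)$. The couplings are lower-triangular or lower-order, so on a finite interval they can be absorbed. For the data in \autoref{assu:data0 moist} I would prove an estimate
\[
\|z\|_{\mathbb E_\tau}\le C(\tau)\bigl(\|z_0\|+\|f\|_{\mathbb F_\tau}\bigr).
\]
Here $\mathbb E_\tau$ is the space in the theorem, and $\mathbb F_\tau$ is built on $\rH^1(0,\tau;\rL^2)\cap\rL^2(0,\tau;\rH^1)$. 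This is the time-differentiated $\rL^2$ theory: the first-order compatibility quantities enter through the time-differentiated problem.

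\emph{Step 3: nonlinear estimates — the main obstacle.} I need a bound
\[
\|N(z)-N(z')\|_{\mathbb F_\tau}\le C(\|z\|+\|z'\|)\,\|z-z'\|_{\mathbb E_\tau}\quad\text{in the ball } \|z\|\le\eps .
\]
The difficulty is the positive-part switches $(q_\rv-q_\rvs)^+$, $(q_\rvs-q_\rv)^+$ and $(q_\rc-1)^+$. They are only Lipschitz continuous, so they can be placed only in $\rH^1$ in space, and they are differences rather than differentiable maps.

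I would handle them as follows.
\begin{itemize}
\item Since $T^\ast$ is chosen with $q_\rvs=0$ near $T^\ast$, and $T$ stays in $[T^\ast/2,3T^\ast/2]$ by smallness, we have $q_\rvs(p,T)\equiv0$ on the solution. Hence $(q_\rv-q_\rvs)^\pm=q_\rv^\pm$.
\item Since $\|q_\rc\|_\infty\lesssim\eps<1$, the term $(q_\rc-1)^+$ vanishes.
\item The remaining terms $q_\rv^+$ and $(-q_\rv)^+q_\rr T(\ldots)$ satisfy the Lipschitz bound $|a^+-b^+|\le|a-b|$ pointwise, with $\nabla(a^+)=\mathbf 1_{\{a>0\}}\nabla a$.
\item For the time derivative in $\rH^1(0,\tau;\rL^2)$ I would use the same chain rule in $t$.
\end{itemize}
The contraction estimate in $\rH^1$-type norms fails for pure positive parts. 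I would therefore try to run the contraction in a weaker norm ($\rL^2$-based), and keep the strong bound only as an invariant ball. Completeness of the closed ball in the weaker metric follows by weak lower semicontinuity.

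Products of $\rH^1$ functions are handled by the following embeddings:
\begin{itemize}
\item $\rH^1(0,\tau;\rH^2)\hookrightarrow\rC([0,\tau];\rL^\infty)$;
\item $\rL^\infty(0,\tau;\rH^2)\cdot\rL^2(0,\tau;\rH^1)\subset\rL^2(0,\tau;\rH^1)$;
\item $\rL^4$–$\rL^4$ pairings for mixed terms.
\end{itemize}
The nonlocal functional $\hat B$ and its time derivative are estimated via smooth composition together with the vertical integrals. These require $T$ bounded away from $0$.

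\emph{Step 4: conclusion.} For $\eps<\eps_0(\tau)$, Banach's fixed point theorem gives a unique Lagrangian solution. The pointwise bounds follow from the embedding into $\rC([0,\tau]\times\Omega)$ and smallness: $\rho_\rd=\bar\rho_\rd\hat B$, and $\hat B$ is close to $\hat B(T^\ast)$. Since $\nabla X$ is close to $\mathrm I$, the Lagrangian map is invertible, so transforming back yields the Eulerian solution. Setting $w$ via \eqref{eq:W-moist} recovers the continuity equation with $w|_{z=0,1}=0$, because \eqref{eq: avg cont moist} holds. The pressure $p$ and the hydrostatic balance follow from \eqref{eq: pressure moist}–\eqref{eq:dry airdensity}. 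Uniqueness in \eqref{eq:coupled moisture compr PE detailed} follows by reversing this derivation.
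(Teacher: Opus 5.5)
The central problem is your Lagrangian transformation. In \eqref{eq: avg cont moist} the averaged density $\bar\rho_\rd$ lives on $\T^2$ and is transported by the averaged field $b=\int_0^1 \hat B v\,\rd\eta$, not by $u$. Composing with the flow of $u=(v,w)$ therefore leaves $(v-b)\cdot\nablaH\bar\rho_\rd$ in the forcing $f$ of your density equation, or $b\cdot\nablaH\bar\rho_\rd$ if you keep $\bar\rho_\rd$ Eulerian. Requiring $f\in\rL^2(0,\tau;\rH^2(\T^2))$ would then need $\bar\rho_\rd\in\rL^2(0,\tau;\rH^3(\T^2))$, which a transport equation never provides, so the hyperbolic derivative loss is not removed. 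Moreover, by \eqref{eq:W-moist} the diagnostic $w$ contains $\int_0^z \hat B(v-b)\cdot\nablaH\bar\rho_\rd\,\rd\eta$ with $\bar\rho_\rd$ only in $\rH^2(\T^2)$. Hence $\nablaH w$ is in general only square integrable in the horizontal variables. That is far too rough to generate a flow with $\nabla\rX-\mathrm I$ bounded, let alone in $\rL^\infty(0,\tau;\rH^2)$ as the transformed second-order operators require. The paper instead uses the two-dimensional flow \eqref{eq:flow moist} of $b$, which does three things:
\begin{itemize}
\item it removes exactly the hyperbolic term, so $G_\rho$ contains no $\nablaH\bar\rho_\rd^\rL$;
\item it acts only horizontally, so the Neumann conditions are unchanged and only $\mathrm Z-\mathrm I\in\rL^\infty(0,\tau;\rH^2(\T^2))$ is needed;
\item it leaves $\mathrm Z\tilde v^\rL\cdot\nablaH$ and $w^\rL\partial_z$ acting only on the parabolic unknowns, where they are quadratic remainders.
\end{itemize}

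Two further steps fail as stated. First, $-(1+q_\rv)T\div u$ contains $T^\ast\partial_z w$. Through $\partial_t\hat B=D\hat B[\partial_t T,\partial_t q_\rv,\partial_t q_\rc,\partial_t q_\rr]$, this is a linear term of top order in $\partial_t T$, plus linear $\partial_t q_j$ and $\divH v$ terms. It is neither lower order nor quadratic, so the linearized $T$-equation is not a heat equation. The paper moves it to the left-hand side and obtains $\mathcal H_T[\partial_t T]-\Delta T$ with $\mathcal H_T=2\,\mathrm{Id}-\mathcal P$, where $\mathcal P$ is a projection. Then $\mathcal H_T^{-1}=\frac12(\mathrm{Id}+\mathcal P)$, and the paper proves $\mathcal R$-sectoriality of $\mathcal H_T^{-1}(-\Delta+\omega)$.

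Second, $(q_\rv-q_\rvs)^+=q_\rv^+$ enters the vapor, cloud and temperature equations without any additional small factor. Its norm in $\mathbb E^h_0(0,\tau)$ is therefore of order $\|q_\rv\|\sim\eps$, not $\eps^2$. Your estimate $\|N(z)-N(z')\|\le C(\|z\|+\|z'\|)\|z-z'\|$ is false for this term, and since the maximal-regularity constant $C(\tau)$ is not small, the ball is not invariant. The paper keeps $q_\rv^+$ on the left of the vapor equation and uses its monotonicity in energy estimates to obtain $\|q_\rv^\rL\|_{\mathbb E^h_1(0,\tau)}\le C\eps^2$ (\autoref{prop: est on q cutoff}, with \autoref{cor: difference est q cutoff} for the contraction). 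The triangular structure then turns the switch in the cloud and temperature equations into forcing of size $O(\eps^2)$. The same holds for the linear $\partial_t q_j$ and $\nablaH q_j$ couplings in the $T$- and $v$-equations.
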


\section{Moist Lagrangian Coordinates}\label{sec: Lagrange moist}

\noindent
Throughout the rest of this article we normalize the gravity constant to be equal to $1$, that is $g=1$.
In order to circumvent the hyperbolic effects that arise in the newly derived continuity equation
\eqref{eq: avg cont moist}, it is natural to introduce characteristics associated with the flow field \(b\). 
Define the flow \(\rX\) as the solution of the differential equation
\begin{equation}
    \label{eq:flow moist}
    \left\{
    \begin{aligned}
        \dt \rX(t,x,y)
        &= b\bigl(t,\rX(t,x,y)\bigr),
        \qquad t>0,\\
        \rX(0,x,y)
        &=(x,y),
    \end{aligned}
    \right.
\end{equation}
for \((x,y)\in\T^2\). Note that \(\rX\) is a two-dimensional flow that depends on the temperature $T$ as well as the mixing ratios $q_j$ for $j \in \{\rv, \rc,\rr\}$ through the nonlinear and nonlocal functional $\hat{B}$. Solving \eqref{eq:flow moist} leads to
\begin{equation}
    \label{eq:diffeo X moist}
    \rX(t,x,y)
    =
    (x,y)^\top
    +
    \int_0^t
    b\bigl(s,\rX(s,x,y)\bigr)\rd s
\end{equation}
for \((x,y)\in\T^2\). Note that in contrast to the transform introduced in \cite{TZ:26}, the flow field $b$ also depends on the mixing ratios $q_j$ for $j \in \{\rv,\rc,\rr\}$.
In the following, we show that the flow \(\rX(t,\cdot)\colon\T^2\to\T^2\) is a well-defined \(\rC^1\)-diffeomorphism provided that the temperature $T$, the mixing ratios $q_j$, and the horizontal velocity $v$ are given and sufficiently regular. Precisely, for \(\tau>0\) and assume that
\[
v\in \E^v_1(0,\tau) \ \text{ and } \
T,\ q_{\rv},\ q_{\rc},\ q_{\rr}\in \E^h_1(0,\tau),
\]
where 
\begin{equation*}
    \label{eq: assu v T q spaces}
    \E^v_1(0,\tau)
    \coloneqq
    \rL^2(0,\tau;\rH^3(\Omega;\R^2))
    \cap
    \rH^1(0,\tau;\rH^2(\Omega;\R^2)) \cap
\rH^2(0,\tau;\rL^2(\Omega;\R^2))
\end{equation*}
and
\begin{equation*}
    \label{eq: assu scalars spaces}
    \E^h_1(0,\tau)
    \coloneqq
    \rL^2(0,\tau;\rH^3(\Omega))
    \cap
    \rH^1(0,\tau;\rH^2(\Omega))\cap
    \rH^2(0,\tau;\rL^2(\Omega)).
\end{equation*}
Moreover, let \(T^\ast>0\) and assume that
\begin{equation*}
    \label{eq: smallness v T q}
    \|v\|_{\E^v_1(0,\tau)}
    +
    \|T-T^\ast\|_{\E^h_1(0,\tau)}
    +
    \sum_{j\in\{\rv,\rc,\rr\}}
    \|q_j\|_{\E^h_1(0,\tau)}
    \le \eps
\end{equation*}
for some \(\eps>0\).
Using the embeddings
\[
\E^h_1(0,\tau)
\hookrightarrow
\rL^\infty(0,\tau;\rH^2(\Omega))
\hookrightarrow
\rL^\infty(0,\tau;\rL^\infty(\Omega)),
\]
there is a constant \(C_{\mathrm{emb}}(\Omega,\tau)>0\) such that
\begin{equation*}
    \|T-T^\ast\|_{\rL^\infty(0,\tau;\rL^\infty(\Omega))}
    +
    \sum_{j\in\{\rv,\rc,\rr\}}
    \|q_j\|_{\rL^\infty(0,\tau;\rL^\infty(\Omega))}
    \le
    C_{\mathrm{emb}}\,\eps.
\end{equation*}
Choosing
\(
C_{\mathrm{emb}}\,\eps
<
\min\{
    {T^\ast}/{2},
  1/2
\},
\)
we infer that
\begin{equation}
    \label{eq: nondegeneracy T q}
    \frac{T^\ast}{2}
    \le T(t,x,y,z)
    \le \frac{3T^\ast}{2}, \quad 
    \frac{1}{2}
    \le 1+q_{\rv}(t,x,y,z)
    \le \frac{3}{2}   \ \text{ and }\   \frac{1}{2}
    \le 1+ Q_{\rm}(t,x,y,z)
    \le \frac{3}{2}
\end{equation}
for all \((t,x,y,z)\in[0,\tau]\times\Omega\).
Since all maps related to the definition of $\hat{B}$ are smooth in the regime \eqref{eq: nondegeneracy T q}, the operator $(T,q_j) \mapsto \hat{B}(T,q_j)$ is locally Lipschitz from $\E^h_1(0,\tau)^4$ to $\E^h_1(0,\tau)$ and standard composition estimates yield that
\begin{equation}\label{eq: deltaB fund}
    \| \hat{B} - \hat{B}^\ast(z) \|_{\E^h_1(0,\tau)} \leq C \Big (  \| T -T^\ast \|_{\E^h_1(0,\tau)}+  \sum_{j\in\{\rv,\rc,\rr\}}
    \|q_j\|_{\E^h_1(0,\tau)} \Big )\leq C\eps,  
\end{equation}
where $\hat{B}^\ast(z)$ is the induced state for the functional $\hat{B}$ coming from the constant state $(T^\ast,0,0,0)$, that is,
\begin{equation*}
    \hat{B}^\ast(z) := \frac{\mathrm{exp}(-z/ T^\ast)}{T^\ast (1- \mathrm{exp}(-1/T^\ast)) }.
\end{equation*}
We note that the state $\hat{B}^\ast:=\hat{B}^\ast(z)$ depends on the vertical variable, which is induced by hydrostatic balance $\dz p  
=-\bar{\rho}_\rd \hat{B}Q_\rm$.  Furthermore, we obtain
\begin{equation*}
    \frac{2}{3 \, T^\ast} \, \frac{\mathrm{exp}(-2 / T^\ast)}{1- \mathrm{exp}(-2 / T^\ast)} \leq \hat{B}(\Theta(t,x,y,z)) \leq \frac{2}{T^\ast} \, \frac{1}{1- \mathrm{exp}(-2 / (3\,T^\ast))} \ \text{ for all }\ (t,x,y,z) \in [0,\tau] \times \Omega.
\end{equation*}
Next, we estimate the transport field
\(b\). Since \(\rH^2(\T^2)\) is a Banach algebra with respect to point-wise multiplication, we infer that
\begin{equation*}
    \begin{aligned}\label{eq: est b space}
        \| \nablaH b(t) \|_{\rH^{2}(\T^2)} &\leq \int_0^1 \| \nablaH \hat{B}(t,\cdot,\eta) \|_{\rH^{2}(\T^2)}\, \| v(t,\cdot,\eta) \|_{\rH^{2}(\T^2)} + \| \hat{B}(t,\cdot,\eta)\|_{\rH^{2}(\T^2)}\,  \| \nablaH v(t,\cdot,\eta) \|_{\rH^{2}(\T^2)} \rd \eta \\
        &\leq  C\| \hat{B}(t) \|_{\rH^{3}(\Omega)} \, \| v(t) \|_{\rH^{2}(\Omega)} + \| \hat{B}(t) \|_{\rH^{2}(\Omega)} \, \|v(t)\|_{\rH^{2}(\Omega)}
    \end{aligned}
\end{equation*}
and therefore
\begin{equation*}
   \begin{aligned}
        \| \nablaH b \|_{\rL^2(0,\tau;\rH^{2}(\T^2))} & \leq  \| \hat{B} \|_{\rL^2(0,\tau;\rH^{3}(\Omega))} \, \| v \|_{\rL^\infty(0,\tau;\rH^{2}(\Omega))} + \| \hat{B} \|_{\rL^\infty(0,\tau;\rH^{2}(\Omega))} \, \|v\|_{\rL^2(0,\tau;\rH^{3}(\Omega))} \\ &\leq C \| \hat{B} \|_{\E^h_1(0,\tau)} \, \| v \|_{\E^v_1{(0,\tau)}} 
        \\&\leq C \eps.
   \end{aligned}
\end{equation*}
Differentiating \eqref{eq:diffeo X moist} with respect to the horizontal variables and using standard estimates for Sobolev flows, we infer that there is a constant \(C>0\) such that
\begin{equation*}
    \label{eq:est of nablaH X - Id moist}
    \sup_{t\in[0,\tau]}
    \|\nablaH \rX(t,\cdot)-\mathrm{I}_2\|_{\rH^2(\T^2)}
    \le
    C\int_0^\tau
    \|\nablaH b(s,\cdot)\|_{\rH^2(\T^2)}\rd s
    \le
    C\tau^{\nicefrac12}\eps.
\end{equation*}
Since \(\rH^2(\T^2)\hookrightarrow\rL^\infty(\T^2)\), this yields
\begin{equation*}
    \label{eq:est of nablaHX - Id in Linfty moist}
    \sup_{t\in[0,\tau]}
    \|\nablaH \rX(t,\cdot)-\mathrm{I}_2\|_{\rL^\infty(\T^2)}
    \le
    C\tau^{\nicefrac12}\eps.
\end{equation*}
Hence, choosing
\[
\alpha
\coloneqq
\min\Bigl\{
    \frac{1}{2C\tau^{\nicefrac12}},
    \frac{T^\ast}{2C_{\mathrm{emb}}},
    \frac{1}{2C_{\mathrm{emb}}},
\Bigr\}
\]
and assuming \(\eps\in(0,\alpha)\), we obtain
\begin{equation}
    \label{eq:est of nablaHX - Id moist half}
    \sup_{t\in[0,\tau]}
    \|\nablaH \rX(t,\cdot)-\mathrm{I}_2\|_{\rL^\infty(\T^2)}
    \le
    \frac12.
\end{equation}
A Neumann series argument then guarantees invertibility of \(\nablaH \rX\). Denoting by \(\mathrm{Y}(t,\cdot)\) the inverse of \(\rX(t,\cdot)\), we find that
\[
\nablaH \mathrm{Y}(t,\rX(t,\cdot))
=[\nablaH \rX]^{-1}(t,\cdot).
\]
Writing
\begin{equation*}
    \label{eq: ex of nablaH Y moist}
    \mathrm{Z}(t,\cdot)
    \coloneqq
    [\nablaH \rX]^{-1}(t,\cdot)
    =
    \frac{1}{\mathrm{J}(t,\cdot)}
    \bigl(\Cof \nablaH \rX(t,\cdot)\bigr)^\top \ \text{ with } \ \mathrm{J}(t,\cdot) ;= \det\nablaH \rX(t,\cdot)
\end{equation*}
we infer from \eqref{eq:est of nablaHX - Id moist half} that
\[
\mathrm{J}\geq  C>0
\quad\text{on }(0,\tau)\times\T^2
\]
for some constant \(C>0\). Moreover, since
\(
\rL^\infty(0,\tau;\rH^2(\T^2))
\cap
\rH^1(0,\tau;\rH^2(\T^2))
\)
is a Banach algebra with respect to point-wise multiplication, we obtain
\begin{equation*}
    \label{eq: est det and cof moist}
    \begin{aligned}
        \|\mathrm{J}\|_{\rL^\infty(0,\tau;\rH^2(\T^2))}
        +
        \|\mathrm{J}\|_{\rH^1(0,\tau;\rH^2(\T^2))}
       +
        \|\Cof\nablaH \rX\|_{\rL^\infty(0,\tau;\rH^2(\T^2))}
        +
        \|\Cof\nablaH \rX\|_{\rH^1(0,\tau;\rH^2(\T^2))}
        \le C.
    \end{aligned}
\end{equation*}
Consequently,
\begin{equation*}
    \label{eq: est Z moist}
    \|\mathrm{Z}\|_{\rL^\infty(0,\tau;\rH^2(\T^2))}
    +
    \|\mathrm{Z}\|_{\rH^1(0,\tau;\rH^2(\T^2))}
    \le C,
\end{equation*}
and, in particular we obtain
\begin{equation}
    \label{eq: est Z minus I moist}
    \|\mathrm{Z}-\mathrm{I}_2\|_{\rL^\infty(0,\tau;\rH^2(\T^2))}
    +
    \|\mathrm{J}-1\|_{\rL^\infty(0,\tau;\rH^2(\T^2))}
    \le
    C\eps.
\end{equation}
Furthermore, for all \(j,k,l\in\{1,2\}\), there is a constant \(C>0\) such that
\begin{equation}
    \label{eq: est dZ moist}
    \Bigl\|
        \frac{\partial \mathrm{Z}_{l,j}}{\partial y_k}
    \Bigr\|_{\rL^\infty(0,\tau;\rH^1(\T^2))}
    \le
    C\eps.
\end{equation}
Finally, the identity
\[
    \dt \mathrm Z
    =
    -\mathrm Z(\dt\nablaH\rX)\mathrm Z .
\]
yields
\[
    \dt\mathrm Z
    =
    -\mathrm Z\nablaH b^\rL .
\]
Consequently, there is a
constant \(C>0\) such that
\begin{equation}
    \label{eq: est dtZ moist}
    \|\dt\mathrm Z\|_{\rL^2(0,\tau;\rH^2(\T^2))}
    +
    \|\dt\mathrm Z^\top\|_{\rL^2(0,\tau;\rH^2(\T^2))} +  \|\dt\mathrm J\|_{\rL^2(0,\tau;\rH^2(\T^2))}
    \le
    C\eps .
\end{equation}
The above properties of the Lagrangian transformation \(\rX\) are summarized in the following.

\begin{lem}\label{lem:ests of trafo moist}
Let \(\tau>0\) and assume that $(v,T,q_j) \in \E^v_1(0,\tau) \times \E^h_1(0,\tau) \times \E^h_1(0,\tau)$ for $j \in \{ \rv,\rc,\rr\}$.
Suppose that
\[
\|v\|_{\E^v_1(0,\tau)}
+
\|T-T^\ast\|_{\E^h_1(0,\tau)}
+
\sum_{j\in\{\rv,\rc,\rr\}}
\|q_j\|_{\E^h_1(0,\tau)}
\le
\eps
\]
for some \(\eps=\eps(\tau,T^\ast,\Omega)>0\) sufficiently small. Denote by \(\rX\) the flow associated with the field \(b\) as defined in \eqref{eq:flow moist}.

\begin{enumerate}[(a)]
    \item There is a constant \(C>0\) such that
    \begin{equation*}
        \sup_{t\in(0,\tau)}
        \|\nablaH \rX(t,\cdot)-\mathrm{I}_2\|_{\rL^{\infty}(\T^2)}
        +
        \sup_{t\in(0,\tau)}
        \|\nablaH \rX(t,\cdot)-\mathrm{I}_2\|_{\rH^2(\T^2)}
        \le
        C\eps.
    \end{equation*}
    In particular, if \(\eps\in(0,\alpha)\), then
    \begin{equation*}
        \|\nablaH \rX-\mathrm{I}_2\|_{\rL^\infty(0,\tau;\rW^{1,\infty}(\T^2))}
        \le
        \frac12,
    \end{equation*}
    so \(\nablaH \rX(t,\cdot)\) is invertible for all \(t\in[0,\tau]\), and \(\mathrm{Z}=[\nablaH \rX]^{-1}\) is well-defined.

    \item Similar estimates as in (a) are valid for \(\mathrm{Z}\) amd $\mathrm{J}$, namely
    \begin{equation*}
        \begin{aligned}
            \sup_{t\in(0,\tau)} \Big (
            \|\mathrm{Z}(t,\cdot)-\mathrm{I}_2\|_{\rH^2(\T^2)}
            +
            \|\mathrm{Z}^\top(t,\cdot)-\mathrm{I}_2\|_{\rH^2(\T^2)}+
            \|\mathrm{J}(t,\cdot)-1\|_{\rH^2(\T^2)} + 
            \|\mathrm{J}^\top(t,\cdot)-1\|_{\rH^2(\T^2)} \Big )
            \le
            C\eps,
        \end{aligned}
    \end{equation*}
    for some constant \(C>0\).
    \item For all \(j,k,l\in\{1,2\}\), there exists a constant \(C>0\) such that
    \begin{equation*}
        \Bigl\|
            \frac{\partial \mathrm{Z}_{l,j}}{\partial y_k}
        \Bigr\|_{\rL^\infty(0,\tau;\rH^1(\T^2))}
        \le
        C\eps.
    \end{equation*}
        \item The time derivatives of the transformation quantities satisfy
    \begin{equation*}
        \|\dt \mathrm Z\|_{\rL^2(0,\tau;\rH^2(\T^2))}
        +
        \|\dt \mathrm Z^\top\|_{\rL^2(0,\tau;\rH^2(\T^2))}
        +
        \|\dt \mathrm J\|_{\rL^2(0,\tau;\rH^2(\T^2))}
        \le
        C\eps ,
    \end{equation*}
   for some constant $C>0$.
\end{enumerate}
\end{lem}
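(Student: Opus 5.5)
The plan is to treat the lemma as a summary of the preceding computations and make each step quantitative in the order (a)–(d). First we fix the regime. The embedding $\E^h_1(0,\tau)\hookrightarrow \rL^\infty(0,\tau;\rL^\infty(\Omega))$ and $\eps<\alpha$ give the nondegeneracy bounds \eqref{eq: nondegeneracy T q}. In this regime $\hat B$ is a smooth composition of pointwise nonlinearities, vertical integrals $\int_0^z$ and a vertical average. All of these act boundedly on $\E^h_1(0,\tau)$, since vertical integration preserves $\rH^k$ regularity on $\Omega$ and $\rH^2(\Omega)$ is an algebra. This yields \eqref{eq: deltaB fund}.

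For the transport field we bound $b$ itself, and not only $\nablaH b$. Because $v$ is small, we get
\[
\|b\|_{\rL^2(0,\tau;\rH^3(\T^2))}+\|b\|_{\rH^1(0,\tau;\rH^2(\T^2))}+\|b\|_{\rL^\infty(0,\tau;\rH^2(\T^2))}\le C\eps ,
\]
using the trace-free vertical averaging bound $\|\bar f\|_{\rH^k(\T^2)}\le\|f\|_{\rH^k(\Omega)}$ and the product estimates already displayed.

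For (a), we differentiate \eqref{eq:diffeo X moist}:
\[
\nablaH\rX(t)-\mathrm I_2=\int_0^t (\nablaH b)(s,\rX(s))\,\nablaH\rX(s)\,\rd s .
\]
The $\rH^2(\T^2)$ norm of the composition $f\circ\rX(s)$ is controlled by $C\|f\|_{\rH^2}$, with a constant depending on $\|\nablaH\rX\|_{\rW^{1,\infty}}$ and $\|\nablaH\rX\|_{\rH^2}$. Combined with the algebra property of $\rH^2(\T^2)$, this gives
\[
\phi(t):=\|\nablaH\rX(t)-\mathrm I_2\|_{\rH^2}\le C\int_0^t\|\nablaH b(s)\|_{\rH^2}\bigl(1+\phi(s)\bigr)\rd s .
\]
Gronwall and Cauchy--Schwarz in time then yield $\phi\le C\tau^{1/2}\eps$.

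This is the step I expect to be the main obstacle. The composition constant depends on $\rX$ itself, so the argument has to be closed by a continuity (bootstrap) argument. We work on the maximal interval where $\phi\le 1$ and $\|\nablaH\rX-\mathrm I_2\|_{\rL^\infty}\le \tfrac12$, and smallness of $\eps$ shows that this interval is all of $[0,\tau]$. The $\rW^{1,\infty}$ claim needs extra care, because $\rH^2(\T^2)$ does not embed into $\rW^{1,\infty}$ for $\nablaH\rX$. Here we use $\nablaH b\in \rL^2(0,\tau;\rH^2)$ and $\nabla^2_{\rH}b\in\rL^2(0,\tau;\rH^{1})$. If that is insufficient, we upgrade to $\nablaH b\in \rL^2(0,\tau;\rH^{2})\hookrightarrow \rL^2(0,\tau;\rC^{0,\gamma})$ and use the ODE for $\nabla^2_{\rH}\rX$ in $\rL^\infty$ where the regularity of $b$ allows. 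Otherwise we state the $\rL^\infty$ part only for $\nablaH\rX$, which is all that is needed for invertibility via the Neumann series.

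For (b), we use $\mathrm J=\det\nablaH\rX$ and $\Cof\nablaH\rX$, which are polynomials in the entries of $\nablaH\rX$. The algebra property of $\rH^2(\T^2)$ gives $\|\mathrm J-1\|_{\rH^2}+\|\Cof\nablaH\rX-\mathrm I_2\|_{\rH^2}\le C\eps$. Since $\mathrm J\ge \tfrac14$ by (a), the map $s\mapsto 1/s$ is smooth on the range of $\mathrm J$, and a composition estimate gives $\|1/\mathrm J-1\|_{\rH^2}\le C\eps$. Writing
\[
\mathrm Z-\mathrm I_2=\tfrac1{\mathrm J}(\Cof\nablaH\rX)^\top-\mathrm I_2
\]
as a sum of products of small and bounded terms then gives (b); transposes are trivial. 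Part (c) follows immediately from (b), because $\partial_{y_k}\mathrm Z_{l,j}$ is a derivative of $\mathrm Z-\mathrm I_2$ and so $\|\partial_{y_k}\mathrm Z\|_{\rH^1}\le\|\mathrm Z-\mathrm I_2\|_{\rH^2}$.

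For (d), we use $\dt\nablaH\rX=\nablaH b^\rL\,$ with $b^\rL=b\circ\rX$. By (a) and the composition estimate, $\|\nablaH b^\rL\|_{\rL^2(0,\tau;\rH^2)}\le C\|\nablaH b\|_{\rL^2(0,\tau;\rH^2)}\le C\eps$. Then $\dt\mathrm Z=-\mathrm Z\,\nablaH b^\rL$ and $\dt\mathrm J=\mathrm J\,\mathrm{tr}(\mathrm Z\nablaH b^\rL)$ (Jacobi's formula). Combined with the $\rL^\infty(0,\tau;\rH^2)$ bounds on $\mathrm Z$ and $\mathrm J$ from (b) and the algebra property, this gives the $\rL^2(0,\tau;\rH^2(\T^2))$ estimates with constant $C\eps$.
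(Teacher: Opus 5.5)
Your proof follows the paper's route. You bound $\nablaH b$ in $\rL^2(0,\tau;\rH^2(\T^2))$, integrate the equation for $\nablaH\rX$ to get $\sup_t\|\nablaH\rX-\mathrm I_2\|_{\rH^2(\T^2)}\le C\tau^{1/2}\eps$, pass to $\rL^\infty$ and invert by a Neumann series. You then treat $\mathrm J$, $\Cof\nablaH\rX$ and hence $\mathrm Z$ by the algebra property of $\rH^2(\T^2)$, and differentiate $\mathrm Z$ in time. Your composition estimate with the continuity argument makes explicit what the paper calls standard estimates for Sobolev flows. Correct one point there: the composition constant depends only on $\|\nablaH\rX\|_{\rL^\infty}$, a lower bound for $\mathrm J$, and $\|\nablaH^2\rX\|_{\rL^4}\le C\|\nablaH\rX-\mathrm I_2\|_{\rH^2}$. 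It does not depend on a $\rW^{1,\infty}$ norm as you wrote, and that is exactly why your bootstrap closes.

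For the $\rW^{1,\infty}$ claim in (a), commit to your fallback and discard both upgrades. The equation for $\nablaH^2\rX$ would require $\nablaH^2 b\in\rL^1(0,\tau;\rL^\infty(\T^2))$, which $b\in\rL^2(0,\tau;\rH^3(\T^2))$ does not give. H\"older continuity of $\nablaH b$ only makes $\nablaH\rX$ H\"older, not Lipschitz. The paper's argument likewise proves only $\sup_t\|\nablaH\rX-\mathrm I_2\|_{\rL^\infty(\T^2)}\le\tfrac12$, which is all that invertibility and parts (b)--(d) require.

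The $\rW^{1,\infty}$ bound as stated can in fact fail. Take $T\equiv T^\ast$, $q_j\equiv0$ and $v=\delta g(x)$, with $g\in\rH^3(\T^2)$, $g(0)=\nabla g(0)=0$ and $\partial_1^2g_1=\log\log(1/|y|)+O(1)$ near $0$. Then $b=\delta g$, and $\partial_1^2\rX_1(t,\cdot)$ is unbounded near $0$ for every $t>0$.

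Finally, you share a small slip with the paper. The correct identity is $\dt\mathrm Z=-\mathrm Z(\dt\nablaH\rX)\mathrm Z=-\mathrm Z\,\nablaH b^\rL\,\mathrm Z$, not $-\mathrm Z\,\nablaH b^\rL$. The extra factor is bounded in $\rL^\infty(0,\tau;\rH^2(\T^2))$, so (d) is unaffected.
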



\medskip 
\section{Moist Lagrangian formulation of the model}\label{sec Lagrange model}

\noindent 
With the estimate of the transformation at hand, we introduce the new unknowns following the characteristics \(\rX\). For \(\bar{\rho}_{\rd}^\ast>0\), \(T^\ast>0\), 
we define
\begin{equation*}
    \begin{aligned}
        \bar{\rho}_{\rd}^\rL(t,\cdot)
:=
\bar{\rho}_{\rd}(t,\rX(t,\cdot))-\bar{\rho}_{\rd}^\ast, \ v^\rL(t,\cdot,z)
&:=
v(t,\rX(t,\cdot),z), \ T^\rL(t,\cdot,z)
:=
T(t,\rX(t,\cdot),z)-T^\ast \text{ and}\\
q_j^\rL(t,\cdot,z)
&:=
q_j(t,\rX(t,\cdot),z)
\ \text{ for } 
j\in\{\rv,\rc,\rr\}.
    \end{aligned}
\end{equation*}
The remaining quantities are defined diagnostically. Specifically, the transformed functional \(\hat B^\rL\) is then defined by
\[
\hat B^\rL(t,\cdot,z)
:=
\hat B\bigl(T(t,\rX(t,\cdot),z),q_{\rv}(t,\rX(t,\cdot),z),q_{\rc}(t,\rX(t,\cdot),z),q_{\rr}(t,\rX(t,\cdot),z)\bigr)
=
\hat B^\ast+\delta \hat B( T^\rL,  q_\rv^\rL, q_\rc^\rL, q_\rr^\rL),
\]
where
\[
\delta \hat B ( T^\rL,  q_\rv^\rL, q_\rc^\rL, q_\rr^\rL)
:=
\hat B(T^\rL+T^\ast, q_\rv^\rL, q_\rc^\rL, q_\rr^\rL)-\hat B^\ast.
\]
In the following, we will also suppress the dependence of $\delta \hat B$ on the transformed temperature and mixing ratios and write
\begin{equation*}
    \delta \hat{B} = \delta \hat B ( T^\rL,  q_\rv^\rL, q_\rc^\rL, q_\rr^\rL).
\end{equation*}
As in \eqref{eq: deltaB fund}, we observe that in the regime \eqref{eq: nondegeneracy T q}, the operator
\(
(T,q_j)\mapsto \hat B(T,q_j)
\)
is locally Lipschitz from \(\E_1^h(0,\tau)^4\) to \(\E_1^h(0,\tau)\). Therefore, there exists a constant \(C>0\) such that
\[
\|\delta \hat B\|_{\E_1^h(0,\tau)}
\le
C\Bigl(
\|T^\rL\|_{\E_1^h(0,\tau)}
+
\sum_{j\in\{\rv,\rc,\rr\}}
\|q_j^\rL\|_{\E_1^h(0,\tau)}
\Bigr).
\]
In the following, we calculate the Fr\'echet derivative of \(\hat B\) explicitly and deduce the corresponding linearized operator \(D\hat B^\ast\), that is, \(D\hat B\) evaluated at the reference state \((T^\ast,0,0,0)\).
To this end, we set
\begin{equation*}
    A((T,q_{\rv},q_{\rc},q_{\rr})(z)
    :=
    \int_0^z
    \frac{1+q_{\rv}+q_{\rc}+q_{\rr}}{(1+q_{\rv})T}(\cdot,\eta)\rd\eta,
\end{equation*}
Let \((h_T,h_{\rv},h_{\rc},h_{\rr})\) be a direction. A direct differentiation yields
\begin{equation*}
    \begin{aligned}
        &\quad (D\hat B)(T,q_{\rv},q_{\rc},q_{\rr})[h_T,h_{\rv},h_{\rc},h_{\rr}](z)
        \\
        &=
        \frac{\exp(-A(z))}{\bar B}
        \Biggl(
            -\frac{h_T(z)}{(1+q_{\rv})T^2}
            -\frac{h_{\rv}(z)}{(1+q_{\rv})^2T}
          \\&\quad  -\frac{1}{(1+q_{\rv})T}
            \int_0^z
            \Biggl[
                -\frac{1+q_{\rv}+q_{\rc}+q_{\rr}}{(1+q_{\rv})T^2}h_T
                -\frac{q_{\rc}+q_{\rr}}{(1+q_{\rv})^2T}h_{\rv}
                +\frac{1}{(1+q_{\rv})T}(h_{\rc}+h_{\rr})
            \Biggr](\cdot,\eta)\,\rd\eta
        \Biggr)
        \\
        &\quad
        -\frac{B}{\bar B^2}
        \int_0^1
        \exp(-A(\xi))
        \Biggl(
            -\frac{h_T(\xi)}{(1+q_{\rv})T^2}
            -\frac{h_{\rv}(\xi)}{(1+q_{\rv})^2T}
           \\ &\quad -\frac{1}{(1+q_{\rv})T}
            \int_0^\xi
            \Biggl[
                -\frac{1+q_{\rv}+q_{\rc}+q_{\rr}}{(1+q_{\rv})T^2}h_T
                -\frac{q_{\rc}+q_{\rr}}{(1+q_{\rv})^2T}h_{\rv}
                +\frac{1}{(1+q_{\rv})T}(h_{\rc}+h_{\rr})
            \Biggr](\cdot,\eta)\rd\eta
        \Biggr)\rd\xi .
    \end{aligned}
\end{equation*}
Hence, \((D\hat B)(T,q_{\rv},q_{\rc},q_{\rr})\) is a sum of multiplication operators and \(z\)-integral operators.
Moreover, the linearized operator \(D\hat B^\ast=(D\hat B)(T^\ast,0,0,0)\) is explicitly given by
\begin{equation}\label{eq: DB lin moist}
    \begin{aligned}
       &\quad D\hat B^\ast[h_T,h_{\rv},h_{\rc},h_{\rr}](z)
        \\&=
        \hat B^\ast(z)
        \Biggl(
            -\frac{h_T(z)}{T^\ast}
            -h_{\rv}(z)
            +\frac{1}{(T^\ast)^2}\int_0^z h_T(\eta)\rd\eta
            -\frac{1}{T^\ast}\int_0^z (h_{\rc}(\eta)+h_{\rr}(\eta))\rd\eta
            -C[h_T,h_{\rv},h_{\rc},h_{\rr}]
        \Biggr),
    \end{aligned}
\end{equation}
where
\begin{equation*}
    \begin{aligned}
        C[h_T,h_{\rv},h_{\rc},h_{\rr}]
        &:=
        \int_0^1
        \hat B^\ast(\xi)
        \Biggl[
            -\frac{h_T(\xi)}{T^\ast}
            -h_{\rv}(\xi)
            +\frac{1}{(T^\ast)^2}\int_0^\xi h_T(\eta)\rd\eta
            -\frac{1}{T^\ast}\int_0^\xi (h_{\rc}(\eta)+h_{\rr}(\eta))\rd\eta
        \Biggr]\rd\xi .
    \end{aligned}
\end{equation*}
As before, we split the Fr\'echet derivative into its linearized part at the reference state and the nonlinear remainder by setting
\begin{equation*}
    \delta(D\hat B)(T^\rL,q_{\rv}^\rL,q_{\rc}^\rL,q_{\rr}^\rL)
    :=
    (D\hat B)(T^\ast+T^\rL,q_{\rv}^\rL,q_{\rc}^\rL,q_{\rr}^\rL)
    -
    D\hat B^\ast .
\end{equation*}
Notice that the maps
\[
(T,q_{\rv})
\mapsto
\frac{1}{(1+q_{\rv})T},
\quad
(T,q_{\rv},q_{\rc},q_{\rr})
\mapsto
\frac{1+q_{\rv}+q_{\rc}+q_{\rr}}{(1+q_{\rv})T} \ \text{ and } \ G\mapsto \int_0^z G(\cdot,\eta)\rd\eta,
\
 \exp(-G), \
 \frac{1}{G}
\]
are all locally Lipschitz on \(\E_1^h(0,\tau)^4\) and $\E_1^h(0,\tau)$ respectively, in the regime \eqref{eq: nondegeneracy T q}, while multiplication and integration are bounded operators. Consequently, the operator-valued map
\[
(T,q_{\rv},q_{\rc},q_{\rr})\mapsto (D\hat B)(T,q_{\rv},q_{\rc},q_{\rr})
\]
is locally Lipschitz from \(\E_1^h(0,\tau)^4\) into \(\mathcal L(\E_1^h(0,\tau)^4,\E_1^h(0,\tau))\). In particular, we obtain
\begin{equation*}
    \|\delta(D\hat B)\|_{\mathrm{op}}
    \le
    C\Bigl(
        \|T^\rL\|_{\E_1^h(0,\tau)}
        +
        \sum_{j\in\{\rv,\rc,\rr\}}
        \|q_j^\rL\|_{\E_1^h(0,\tau)}
    \Bigr).
\end{equation*}
Next, by the chain rule,
\begin{equation}\label{eq:hori deriv of hat B moist final}
    \begin{aligned}
        \nablaH \hat B^\rL
        &=
        \bigl(
            D\hat B^\ast
            +
            \delta(D\hat B)(T^\rL,q_{\rv}^\rL,q_{\rc}^\rL,q_{\rr}^\rL)
        \bigr)
        \bigl[
            \nablaH T^\rL,\nablaH q_{\rv}^\rL,\nablaH q_{\rc}^\rL,\nablaH q_{\rr}^\rL
        \bigr].
    \end{aligned}
\end{equation}
In particular, the transformed averaged velocity field can be written as
\begin{equation*}
    b^\rL(t,\cdot)
    :=
    b(t,\rX(t,\cdot))
    =
    \int_0^1 \hat B^\ast\,v^\rL(t,\cdot,z)\,\rd z
    +
    \int_0^1 \delta\hat B(t,\cdot,z)\,v^\rL(t,\cdot,z)\,\rd z.
\end{equation*}
Consequently, its horizontal divergence is given by
\begin{equation}\label{eq: divH b moist}
    \begin{aligned}
        \divH b^\rL
        &=
        \int_0^1
        \Big[
            \nablaH \hat B^\rL \cdot v^\rL
            +
            \hat B^\rL\,\divH v^\rL
        \Big](\cdot,z)\,\rd z
        \\
        &=
        \int_0^1
        \Big[
            \bigl(
                D\hat B^\ast
                +
                \delta(D\hat B)
            \bigr)
            \bigl[
                \nablaH T^\rL,\nablaH q_{\rv}^\rL,\nablaH q_{\rc}^\rL,\nablaH q_{\rr}^\rL
            \bigr]
            \cdot v^\rL
            +
            \bigl(
                \hat B^\ast+\delta\hat B
            \bigr)\,\divH v^\rL
        \Big](\cdot,z)\,\rd z.
    \end{aligned}
\end{equation}
Next, we transform the diagnostic formula for the vertical velocity. Using \eqref{eq:W-moist}, together with the above representation of \(\hat B^\rL\) , 
    \(\dt \hat B^\rL\)
   and \(\nablaH \hat B^\rL\) , we obtain
using the shorthand
\(
\tilde v^\rL := v^\rL-b^\rL
\) 
the transformed formula
\begin{equation}\label{eq:wL_rhobar_hatB_moist}
    \begin{aligned}
       &\quad \bigl((\bar{\rho}_{\rd}\hat{B}w)^\rL\bigr)(t,\cdot,z)
       \\
       &=
        -\int_0^z
        \Big[
            \hat{B}^\rL \tilde v^\rL \cdot \mathrm{Z}^\top \nablaH \bar{\rho}_{\rd}^\rL
            +
            (\bar{\rho}_{\rd}^\rL+\bar{\rho}_{\rd}^\ast)
            \hat{B}^\rL
            \nablaH \tilde v^\rL : \mathrm{Z}^\top
            +
            (\bar{\rho}_{\rd}^\rL+\bar{\rho}_{\rd}^\ast)
            \bigl(
                D\hat B^\ast
                +
                \delta(D\hat B)
            \bigr)
       \\
       &\quad
            \bigl[
                \del_t T^\rL + \mathrm{Z} \tilde v^\rL\cdot\nablaH T^\rL,
                \del_t q_{\rv}^\rL + \mathrm{Z} \tilde v^\rL\cdot\nablaH q_{\rv}^\rL,
                \del_t q_{\rc}^\rL + \mathrm{Z} \tilde v^\rL\cdot\nablaH q_{\rc}^\rL,
                \del_t q_{\rr}^\rL + \mathrm{Z} \tilde v^\rL\cdot\nablaH q_{\rr}^\rL
            \bigr]
        \Big](\cdot,\eta)\,\rd\eta.
    \end{aligned}
\end{equation}
We further split
\begin{equation}\label{eq:wL-moist-full}
    (\bar{\rho}_{\rd}\hat{B}w)^\rL=(\bar\rho_{\rd}^\rL+\bar\rho_{\rd}^\ast)\hat B^\rL w^\rL
    =
    J_1^{\mathrm m}+J_2^{\mathrm m},
\end{equation}
where \(J_1^{\mathrm m}\) collects the terms that are linear with respect to the solution and \(J_2^{\mathrm m}\) contains all higher order terms. More precisely,
\begin{equation}
    \label{eq:J1-moist}
    \begin{aligned}
        J_1^{\mathrm m}
        &:=
        -\bar\rho_{\rd}^\ast
        \int_0^z
        \big[
            D\hat B^\ast
            \bigl[
                \del_t T^\rL,\del_t q_{\rv}^\rL,\del_t q_{\rc}^\rL,\del_t q_{\rr}^\rL
            \bigr]
            +
            \hat B^\ast\,\divH v^\rL
        \big]
        (\cdot,\eta)\,\rd\eta
        \\
        &\quad
        +
        \bar\rho_{\rd}^\ast
        \Big(\int_0^z \hat B^\ast(\cdot,\eta)\,\rd\eta\Big)
        \Big(\int_0^1
            \big[\hat B^\ast\,\divH v^\rL\big](\cdot,\zeta)\,\rd\zeta
        \Big).
    \end{aligned}
\end{equation}
The remainder \(J_2^{\mathrm m}\) is then defined by
\[
J_2^{\mathrm m}
:=
\bigl((\bar\rho_{\rd}^\rL+\bar\rho_{\rd}^\ast)\hat B^\rL w^\rL\bigr)-J_1^{\mathrm m}.
\]
Finally, we write
\(
Q_{\rm}^\rL:=q_{\rv}^\rL+q_{\rc}^\rL+q_{\rr}^\rL,
\)
so that
\[
Q_{\rm}(t,\rX(t,\cdot),z)=1+Q_{\rm}^\rL(t,\cdot,z).
\]
To simplify notation, we introduce the transformed Laplacian \(\cL_1\) and the transformed horizontal \(\nablaH\divH\)-operator \(\cL_2\). For regular enough scalar functions \(f\), we define
\[
\cL_1 f
:=
\sum_{j,k,l=1}^2
\frac{\del^2 f}{\del y_k\del y_l} \mathrm Z_{k,j}\mathrm Z_{l,j}
+
\sum_{j,k,l=1}^2
\frac{\del f}{\del y_l}\frac{\del \mathrm Z_{l,j}}{\del y_k}\mathrm Z_{k,j}
+
\del_z^2 f,
\]
and hence
\begin{equation}\label{eq:lapla}
    (\cL_1-\Delta)f
    =
    \sum_{j,k,l=1}^2
    \frac{\del^2 f}{\del y_k\del y_l}(\mathrm Z_{k,j}-\delta_{k,j})\mathrm Z_{l,j}
    +
    \sum_{k,l=1}^2
    \frac{\del^2 f}{\del y_k\del y_l}(\mathrm Z_{l,k}-\delta_{l,k})
    +
    \sum_{j,k,l=1}^2
    \frac{\del f}{\del y_l}\frac{\del \mathrm Z_{l,j}}{\del y_k}\mathrm Z_{k,j}.
\end{equation}
For regular enough vector fields \(V=(V_1,V_2)\), we define
\[
\bigl((\cL_2-\nablaH\divH)V\bigr)_i
:=
\sum_{j,k,l=1}^2
\frac{\partial^2 V_j}{\partial y_k\partial y_l}(\mathrm Z_{k,j}-\delta_{k,j})\mathrm Z_{l,i}
+
\sum_{j,l=1}^2
\frac{\partial^2 V_j}{\partial y_j\partial y_l}(\mathrm Z_{l,i}-\delta_{l,i})
+
\sum_{j,k,l=1}^2
\frac{\partial V_j}{\partial y_k}\frac{\partial \mathrm Z_{k,j}}{\partial y_l}\mathrm Z_{l,i}.
\]
Moreover, we write
\[
V_{\rr}^\rL(t,\cdot,z)
:=
V_{\rr}(t,\rX(t,\cdot),z).
\]
With the above conventions, the transformed equations are almost in a suitable form. However, a delicate term which still contains a hidden linear contribution is the factor
\[
(1+q_{\rv})T\,\del_z w
\]
in the temperature equation. Therefore, we now isolate its linear part and incorporate it into the left-hand side of the transformed system.
Using hydrostatic balance
\(
\del_z p=-\bar\rho_{\rd}\hat B Q_{\rm}
\)
together with the equation of state
\(
p=\bar\rho_{\rd}\hat B(1+q_{\rv})T,
\)
we obtain the identity
\begin{equation}\label{eq:vertical pressure work identity}
    (1+q_{\rv})T\,\del_z w
    =
    \frac{(1+q_{\rv})T}{\bar\rho_{\rd}\hat B}\,\del_z(\bar\rho_{\rd}\hat B\,w)
    +
    \frac{Q_{\rm}+\del_z\bigl((1+q_{\rv})T\bigr)}{\bar\rho_{\rd}\hat B}\,
    (\bar\rho_{\rd}\hat B\,w).
\end{equation}
After transformation and using the decomposition
\(
(\bar\rho_{\rd}\hat B w)^\rL = J_1^{\mathrm m}+J_2^{\mathrm m},
\)
we infer
\begin{equation}\label{eq:vertical linear split}
    \begin{aligned}
        \bigl((1+q_{\rv})T\,\del_z w\bigr)^\rL
        &=
        \frac{(1+q_{\rv}^\rL)(T^\ast+T^\rL)}
        {(\bar\rho_{\rd}^\rL+\bar\rho_{\rd}^\ast)(\hat B^\ast+\delta\hat B)}
        \,\del_z(J_1^{\mathrm m}+J_2^{\mathrm m})
        +
        \frac{
            1+Q_\rm^\rL
            +
            \del_z\bigl((1+q_{\rv}^\rL)(T^\ast+T^\rL)\bigr)
        }
        {(\bar\rho_{\rd}^\rL+\bar\rho_{\rd}^\ast)(\hat B^\ast+\delta\hat B)}
        (J_1^{\mathrm m}+J_2^{\mathrm m})
    \end{aligned}
\end{equation}
Since the reference state is given by
\(
(\bar\rho_{\rd}^\ast,0,T^\ast,0,0,0),
\)
the corresponding frozen coefficients are
\[
\frac{T^\ast}{\bar\rho_{\rd}^\ast\hat B^\ast}
\ \text{ and }\
\frac{1}{\bar\rho_{\rd}^\ast\hat B^\ast}.
\]
Hence, by adding and subtracting these reference-state coefficients, we obtain the exact decomposition
\begin{equation}\label{eq:vertical linear split final}
    \begin{aligned}
        \bigl((1+q_{\rv})T\,\del_z w\bigr)^\rL
        &=
        \frac{T^\ast}{\bar\rho_{\rd}^\ast\hat B^\ast}\,\del_z J_1^{\mathrm m}
        +
        \frac{1}{\bar\rho_{\rd}^\ast\hat B^\ast}\,J_1^{\mathrm m}
        +
        \mathcal N,
    \end{aligned}
\end{equation}
where
\begin{equation}\label{eq:def-Nw}
    \begin{aligned}
        \mathcal N
        &:=
        \frac{
            \bar\rho_{\rd}^\ast\hat B^\ast T^\ast q_{\rv}^\rL
            +
            \bar\rho_{\rd}^\ast\hat B^\ast T^\rL
            +
            \bar\rho_{\rd}^\ast\hat B^\ast q_{\rv}^\rL T^\rL
            -
            T^\ast\bar\rho_{\rd}^\rL\hat B^\ast
            -
            T^\ast\bar\rho_{\rd}^\rL\delta\hat B
            -
            T^\ast\bar\rho_{\rd}^\ast\delta\hat B
        }
        {
            \bar\rho_{\rd}^\ast\hat B^\ast
            (\bar\rho_{\rd}^\rL+\bar\rho_{\rd}^\ast)
            (\hat B^\ast+\delta\hat B)
        }
        \,\del_z J_1^{\mathrm m}
        \\
        &\quad
        +
        \frac{
            \bar\rho_{\rd}^\ast\hat B^\ast Q_{\rm}^\rL
            +
            \bar\rho_{\rd}^\ast\hat B^\ast\del_z T^\rL
            +
            \bar\rho_{\rd}^\ast\hat B^\ast q_{\rv}^\rL\del_z T^\rL
            +
            \bar\rho_{\rd}^\ast\hat B^\ast T^\ast\del_z q_{\rv}^\rL
            +
            \bar\rho_{\rd}^\ast\hat B^\ast T^\rL\del_z q_{\rv}^\rL
            -
            \bar\rho_{\rd}^\rL\hat B^\ast
            -
            \bar\rho_{\rd}^\rL\delta\hat B
            -
            \bar\rho_{\rd}^\ast\delta\hat B
        }
        {
            \bar\rho_{\rd}^\ast\hat B^\ast
            (\bar\rho_{\rd}^\rL+\bar\rho_{\rd}^\ast)
            (\hat B^\ast+\delta\hat B)
        }
        \,J_1^{\mathrm m}
        \\
        &\quad
        +
        \frac{
            T^\ast
            +
            T^\rL
            +
            T^\ast q_{\rv}^\rL
            +
            q_{\rv}^\rL T^\rL
        }
        {
            (\bar\rho_{\rd}^\rL+\bar\rho_{\rd}^\ast)
            (\hat B^\ast+\delta\hat B)
        }
        \,\del_z J_2^{\mathrm m}
        \\
        &\quad
        +
        \frac{
            1
            +
            Q_{\rm}^\rL
            +
            \del_z T^\rL
            +
            q_{\rv}^\rL\del_z T^\rL
            +
            T^\ast\del_z q_{\rv}^\rL
            +
            T^\rL\del_z q_{\rv}^\rL
        }
        {
            (\bar\rho_{\rd}^\rL+\bar\rho_{\rd}^\ast)
            (\hat B^\ast+\delta\hat B)
        }
        \,J_2^{\mathrm m}.
    \end{aligned}
\end{equation}
Using the explicit formula for \(J^\rm_1\) and  $\int_0^z \hat{B}^\ast(\eta)\,d\eta
=
\frac{
1-e^{-z/T^\ast}
}{
1-e^{-1/T^\ast}
}$
, we infer
\begin{equation}\label{eq:vertical linear operator identity}
    \begin{aligned}
       & \quad \frac{1}{\bar\rho_{\rd}^\ast \hat B^\ast}J^\rm_1
        +
        \frac{T^\ast}{\bar\rho_{\rd}^\ast \hat B^\ast}\del_z J^\rm_1
       \\ &=
        -\frac{T^\ast}{\hat B^\ast}
        D\hat B^\ast
        \bigl[
            \del_t T^\rL,\del_t q_{\rv}^\rL,\del_t q_{\rc}^\rL,\del_t q_{\rr}^\rL
        \bigr]
        -
        \frac{1}{\hat B^\ast}
        \int_0^z
        D\hat B^\ast
        \bigl[
            \del_t T^\rL,\del_t q_{\rv}^\rL,\del_t q_{\rc}^\rL,\del_t q_{\rr}^\rL
        \bigr](\cdot,\eta)\,\rd\eta
        \\
        &\quad
        -
        \frac{
            \int_0^z \hat B^\ast(\cdot,\eta)\,\divH v^\rL(\cdot,\eta)\,\rd\eta
        }{
            \hat B^\ast
        }
        - T^\ast \divH v^\rL
        +
        T^\ast e^{z/T^\ast}
        \int_0^1
        \big[
            \hat B^\ast\,\divH v^\rL
        \big](\cdot,\eta)\,\rd\eta .
    \end{aligned}
\end{equation}
Using the explicit formula for \(J^\rm_1\), the identity \eqref{eq:vertical linear operator identity} can be decomposed by linearity of \(D\hat B^\ast\) as
\begin{equation}\label{eq:vertical linear operator split}
    \begin{aligned}
        \frac{1}{\bar\rho_{\rd}^\ast \hat B^\ast}J^\rm_1
        +
        \frac{T^\ast}{\bar\rho_{\rd}^\ast \hat B^\ast}\del_z J^\rm_1
        &=
        \mathcal G_T[\del_t T^\rL]
        +
        \sum_{j\in\{\rv,\rc,\rr\}} \mathcal G_j[\del_t q_j^\rL]
        -
        \frac{
            \int_0^z \hat B^\ast(\cdot,\eta)\,\divH v^\rL(\cdot,\eta)\,\rd\eta
        }{
            \hat B^\ast
        }
        -T^\ast \divH v^\rL
        \\
        &\quad
        +
        T^\ast e^{z/T^\ast}
        \int_0^1
        \big[
            \hat B^\ast\,\divH v^\rL
        \big](\cdot,\eta)\,\rd\eta ,
    \end{aligned}
\end{equation}
where the contribution corresponding to the temperature variable is given explicitly by
\begin{equation}\label{eq:def-GT}
    \mathcal G_T[h](z)
    :=
    h(z)
    -
    \frac{\exp\bigl((z-1)/T^\ast\bigr)}
    {T^\ast(1-\exp(-1/T^\ast))}
    \int_0^1 h(\eta)\,\rd\eta ,
\end{equation}
and, for \(j\in\{\rv,\rc,\rr\}\), the corresponding moisture contributions are defined by
\begin{equation*}
    \mathcal G_{j}[h](z)
    :=
    -\frac{T^\ast}{\hat B^\ast}D\hat B^\ast[0,h_j](z)
    -
    \frac{1}{\hat B^\ast}
    \int_0^z D\hat B^\ast[0,h_j](\eta)\,\rd\eta,
\end{equation*}
with $h_j$ being a three dimensional vector with only $0$'s and the entry $h$ in the $j$-th position.
Collecting the identities derived above, the transformed system takes the form
\begin{equation}\label{eq: full moist CPE Lagrange final}
    \left\{
    \begin{aligned}
        \del_t \bar\rho_{\rd}^\rL
        +\bar\rho_{\rd}^\ast
        \int_0^1
        \big[
            \hat B^\ast\,\divH v^\rL
        \big](\cdot,\eta)\,\rd\eta
        &= G_{\rho},
        &&\text{ in }\T^2_\tau,\\
          \del_t v^\rL
        -\frac{\rL_\rH}{\bar\rho_{\rd}^\ast\hat B^\ast} v^\rL
        +\frac{T^\ast}{\bar\rho_{\rd}^\ast}\nablaH \bar\rho_{\rd}^\rL
        +\nablaH T^\rL+  \frac{T^\ast}{\hat B^\ast}
    D\hat B^\ast
    \bigl[
        \nablaH T^\rL,0,0,0
    \bigr]
        &= G_v,
        &&\text{ in }\Omega_\tau,\\
        \mathcal H_T[\del_t T^\rL]
        -\Delta T^\rL
        -\frac{
            \int_0^z \hat B^\ast(\cdot,\eta)\,\divH v^\rL(\cdot,\eta)\,\rd\eta
        }{
            \hat B^\ast
        }
        +T^\ast e^{z/T^\ast}
        \int_0^1
        \big[
            \hat B^\ast\,\divH v^\rL
        \big](\cdot,\eta)\,\rd\eta
        &= G_T,
        &&\text{ in }\Omega_\tau,\\
        \del_t q_{\rv}^\rL-\Delta q_{\rv}^\rL
        &= G_{\rv},
        &&\text{ in }\Omega_\tau,\\        \del_t q_{\rc}^\rL-\Delta q_{\rc}^\rL
        &= G_{\rc},
        &&\text{ in }\Omega_\tau,\\
        \del_t q_{\rr}^\rL-\Delta q_{\rr}^\rL
        &= G_{\rr},
        &&\text{ in }\Omega_\tau,\\
        \bar\rho_{\rd}^\rL(0)=\bar\rho_{\rd,0}-\bar\rho_{\rd}^\ast,
        \
        v^\rL(0)=v_0,
        \
        T^\rL(0)=T_0-T^\ast,
        \
        q_j^\rL(0)=q_{j,0}\ \text{ for } j\in\{\rv,\rc,\rr\},
    \end{aligned}
    \right.
\end{equation}
where the operator $\mathcal{H}_T$ is given by
\begin{equation*}
    \mathcal{H}_T[h] :=  h + \mathcal{G}_T [h] =   2 h
    -
    \frac{\exp\bigl((z-1)/T^\ast\bigr)}
    {T^\ast(1-\exp(-1/T^\ast))}
    \int_0^1 h(\cdot,\eta)\rd\eta .
\end{equation*}
Here the right-hand sides are given by
\begin{equation*}
    \begin{aligned}
        G_{\rho}
        &:=
        -\bar\rho_{\rd}^\rL\,\divH b^\rL   -(\bar\rho_{\rd}^\rL+\bar\rho_{\rd}^\ast)\,
        \nablaH b^\rL:(\mathrm Z^\top-\mathrm I_2)
        \\
        &\quad
        -\bar\rho_{\rd}^\ast
        \int_0^1
        \Big[
            \delta\hat B\,\divH v^\rL
            +
            \bigl(
                D\hat B^\ast+\delta(D\hat B)
            \bigr)
            \bigl[
                \nablaH (T^\rL, q_{\rv}^\rL, q_{\rc}^\rL, q_{\rr}^\rL)
            \bigr]
            \cdot v^\rL
        \Big](\cdot,\eta)\rd\eta,
    \end{aligned}
\end{equation*}
and
\begin{equation*}
    \begin{aligned}
        G_v
        &:=
    -\Big(
        \frac{\bar\rho_{\rd}^\rL \,\delta\hat B}
        {\bar\rho_{\rd}^\ast \,\hat B^\ast}
        +
        \frac{\bar\rho_{\rd}^\rL}{\bar\rho_{\rd}^\ast}
        +
        \frac{\delta\hat B}{\hat B^\ast}
        +
        Q_{\rm}^\rL
        +
        \frac{\bar\rho_{\rd}^\rL}{\bar\rho_{\rd}^\ast}Q_{\rm}^\rL
        +
        \frac{\delta\hat B}{\hat B^\ast}Q_{\rm}^\rL
        +
        \frac{\bar\rho_{\rd}^\rL \,\delta\hat B}
        {\bar\rho_{\rd}^\ast \,\hat B^\ast}Q_{\rm}^\rL
    \Big)
    \dt v^\rL   -
\frac{(\bar \rho_\rd \hat{B} w)^\rL(1+ Q_\rm^\rL) 
        }{
            \bar\rho_{\rd}^\ast\hat B^\ast
        }   \del_z v^\rL    \\&\quad
        -
\frac{
            (\bar\rho_{\rd}^\rL+\bar\rho_{\rd}^\ast)\hat B^\rL(1+Q_{\rm}^\rL)
        }{
            \bar\rho_{\rd}^\ast\hat B^\ast
        }
            \mathrm Z\tilde v^\rL\cdot\nablaH v^\rL        
        +\frac{\mu}{\bar\rho_{\rd}^\ast\hat B^\ast}
        (\cL_1-\Delta)v^\rL  
         \\
        &\quad +
        \frac{\mu+\lambda}{\bar\rho_{\rd}^\ast\hat B^\ast}
        (\cL_2-\nablaH\divH)v^\rL + \frac{
            (\bar\rho_{\rd}^\rL+\bar\rho_{\rd}^\ast)\hat B^\rL q_{\rr}^\rL V_{\rr}^\rL
        }{
            \bar\rho_{\rd}^\ast\hat B^\ast
        }
        \,\del_z v^\rL 
            \\
        &\quad
        -\frac{1}{\bar\rho_{\rd}^\ast\hat B^\ast}
        \Big[
            (\mathrm Z^\top-\mathrm I_2)\nablaH
            \big(
                (\bar\rho_{\rd}^\rL+\bar\rho_{\rd}^\ast)
                \hat B^\rL
                (1+q_{\rv}^\rL)
                (T^\ast+T^\rL)
            \big)  -T^\ast \nablaH q_{\rv}^\rL
        -\frac{T^\ast}{\hat B^\ast}
        D\hat B^\ast
        \bigl[
            \nablaH (0, q_{\rv}^\rL, q_{\rc}^\rL, q_{\rr}^\rL)
        \bigr]
            \\
            &\quad
            +
            \big[
                \hat B^\ast T^\rL
                +
                T^\ast\delta\hat B
                +
                \delta\hat B T^\rL
                +
                \hat B^\ast T^\ast q_{\rv}^\rL
                +
                T^\ast\delta\hat B q_{\rv}^\rL
                +
                \hat B^\ast q_{\rv}^\rL T^\rL
                +
                \delta\hat B q_{\rv}^\rL T^\rL
            \big]
            \nablaH\bar\rho_{\rd}^\rL 
            \\
            &\quad
            +
            \big[
                \hat B^\ast\bar\rho_{\rd}^\rL
                +
                \bar\rho_{\rd}^\rL\delta\hat B
                +
                \bar\rho_{\rd}^\ast\delta\hat B
                +
                \bar\rho_{\rd}^\ast\hat B^\ast q_{\rv}^\rL
                +
                \bar\rho_{\rd}^\rL\hat B^\ast q_{\rv}^\rL
                +
                \bar\rho_{\rd}^\ast\delta\hat B q_{\rv}^\rL
                +
                \bar\rho_{\rd}^\rL\delta\hat B q_{\rv}^\rL
            \big]
            \nablaH T^\rL
            \\
            &\quad
            +
            \big[
                T^\ast\hat B^\ast\bar\rho_{\rd}^\rL
                +
                \hat B^\ast\bar\rho_{\rd}^\ast T^\rL
                +
                \hat B^\ast\bar\rho_{\rd}^\rL T^\rL
                +
                T^\ast\bar\rho_{\rd}^\ast\delta\hat B
                +
                T^\ast\bar\rho_{\rd}^\rL\delta\hat B
                +
                \bar\rho_{\rd}^\ast\delta\hat B T^\rL
                +
                \bar\rho_{\rd}^\rL\delta\hat B T^\rL
            \big]
            \nablaH q_{\rv}^\rL
            \\
            &\quad
            +
            \big[
                T^\ast\bar\rho_{\rd}^\rL
                +
                \bar\rho_{\rd}^\ast T^\rL
                +
                \bar\rho_{\rd}^\rL T^\rL
                +
                \bar\rho_{\rd}^\ast T^\ast q_{\rv}^\rL
                +
                T^\ast\bar\rho_{\rd}^\rL q_{\rv}^\rL
                +
                \bar\rho_{\rd}^\ast q_{\rv}^\rL T^\rL
                +
                \bar\rho_{\rd}^\rL q_{\rv}^\rL T^\rL
            \big]
            D\hat B^\ast
            \big[
                \nablaH(T^\rL,q_{\rv}^\rL,q_{\rc}^\rL,q_{\rr}^\rL)
            \big]
            \\
            &\quad
            +
            \big[
                \bar\rho_{\rd}^\ast T^\ast
                +
                T^\ast\bar\rho_{\rd}^\rL
                +
                \bar\rho_{\rd}^\ast T^\rL
                +
                \bar\rho_{\rd}^\rL T^\rL
                +
                \bar\rho_{\rd}^\ast T^\ast q_{\rv}^\rL
                +
                T^\ast\bar\rho_{\rd}^\rL q_{\rv}^\rL
                +
                \bar\rho_{\rd}^\ast q_{\rv}^\rL T^\rL
                +
                \bar\rho_{\rd}^\rL q_{\rv}^\rL T^\rL
            \big]
            \delta(D\hat B)
            \big[
                \nablaH(T^\rL,q_{\rv}^\rL,q_{\rc}^\rL,q_{\rr}^\rL)
            \big]
        \Big]
    \end{aligned}
\end{equation*}
as well as 
\begin{equation*}
    \begin{aligned}
        G_T
        &:=
        -Q_{\rm}^\rL\,\del_t T^\rL
        -(1+Q_{\rm}^\rL)
        \Big[
            \mathrm Z\tilde v^\rL\cdot\nablaH T^\rL
            +
            w^\rL\del_z T^\rL
        \Big]
        +(\cL_1-\Delta)T^\rL
        +
        q_{\rr}^\rL V_{\rr}^\rL\,\del_z(T^\ast+T^\rL)         -\mathcal N
        \\
        &\quad
        -
            (1+q_{\rv}^\rL)(T^\ast+T^\rL)\,\nablaH v^\rL:(\mathrm Z^\top - \mathrm{I}_2) - q_\rv^\rL ( T^\ast + T^\rL) \divH v^\rL -T^\rL \divH v^\rL \\ &\quad
        +\frac{T^\ast   D\hat B^\ast
        \bigl[
            0,\del_t (q_{\rv}^\rL, q_{\rc}^\rL, q_{\rr}^\rL)
        \bigr]}{\hat B^\ast}
        +\frac{1}{\hat B^\ast}
        \int_0^z   D\hat B^\ast
        \bigl[
            0,\del_t (q_{\rv}^\rL, q_{\rc}^\rL, q_{\rr}^\rL)
        \bigr]
       (\cdot,\eta)\rd\eta
        \\
        &\quad
        -
            (T^\ast+T^\rL)\frac{1+q_{\rv}^\rL}{1+Q_{\rm}^\rL}(q_{\rvs}^\rL-q_{\rv}^\rL)^+q_{\rr}^\rL
            +(q_{\rv}^\rL-q_{\rvs}^\rL)q_{\rc}^\rL
            +(q_{\rv}^\rL-q_{\rvs}^\rL)^+,
    \end{aligned}
\end{equation*}
where $\mathcal N$ is given by \eqref{eq:def-Nw} and
\begin{equation*}
    \begin{aligned}
        G_{\rv}
        &:=
        (\cL_1-\Delta)q_{\rv}^\rL
        -
        \mathrm Z\tilde v^\rL\cdot\nablaH q_{\rv}^\rL
        -
        w^\rL\del_z q_{\rv}^\rL
        \\
        &\quad
        +
        (T^\ast+T^\rL)\frac{1+q_{\rv}^\rL}{1+Q_{\rm}^\rL}(q_{\rvs}^\rL-q_{\rv}^\rL)^+q_{\rr}^\rL
        -(q_{\rv}^\rL-q_{\rvs}^\rL)q_{\rc}^\rL
        -(q_{\rv}^\rL-q_{\rvs}^\rL)^+,
        \\
        G_{\rc}
        &:=
        (\cL_1-\Delta)q_{\rc}^\rL
        -
        \mathrm Z\tilde v^\rL\cdot\nablaH q_{\rc}^\rL
        -
        w^\rL\del_z q_{\rc}^\rL
        \\
        &\quad
        +
        (q_{\rv}^\rL-q_{\rvs}^\rL)q_{\rc}^\rL
        +(q_{\rv}^\rL-q_{\rvs}^\rL)^+
        -(q_{\rc}^\rL-1)^+
        -q_{\rc}^\rL q_{\rr}^\rL,
        \\
        G_{\rr}
        &:=
        (\cL_1-\Delta)q_{\rr}^\rL
        -
        \mathrm Z\tilde v^\rL\cdot\nablaH q_{\rr}^\rL
        -
        w^\rL\del_z q_{\rr}^\rL
        +
        \frac{1}{\hat B^\rL}
        \del_z\bigl(\hat B^\rL q_{\rr}^\rL V_{\rr}^\rL\bigr)
        \\
        &\quad
        +
        (q_{\rc}^\rL-1)^+
        +q_{\rc}^\rL q_{\rr}^\rL
        -
        (T^\ast+T^\rL)\frac{1+q_{\rv}^\rL}{1+Q_{\rm}^\rL}(q_{\rvs}^\rL-q_{\rv}^\rL)^+q_{\rr}^\rL .
    \end{aligned}
\end{equation*}
Finally, the boundary conditions are unchanged by the transformation, since the latter acts only in the horizontal directions.

\section{Linear Theory}\label{sec: local}
\noindent
In this section, we study the fully linearized system associated with \eqref{eq: full moist CPE Lagrange final}. At this stage, we regard the right-hand sides as given forcing terms and focus on the corresponding linear problem. Since the averaged density is now treated again as an independent unknown, the linearized system consists of six evolution equations for the variables
\[
(\xi,V,T,Q_{\rv},Q_{\rc},Q_{\rr}),
\]
where \(\xi\) denotes the linearized counterpart of \(\bar\rho_{\rd}^\rL\).
To formulate the system in a concise way, we introduce some notation that will be used throughout this section.
Define the functions \(\alpha\) and \(\beta\) and the integral operators \(\mathcal I_z\) by
\begin{equation*}
    \alpha(z):=\frac{1}{\bar\rho_{\rd}^\ast\hat B^\ast(z)},
    \quad
    \beta(z):=\frac{\exp((z-1)/T^\ast)}{T^\ast(1-\exp(-1/T^\ast))} \ \text{ and } \  \mathcal I_z f:=\int_0^z [\hat B^\ast f](\cdot,\eta)\,\rd\eta \ \text{ for } z \in [0,1],
\end{equation*}
where \(f\) is regular enough function.
Next, we set
\begin{equation*}
    \mathcal A T
    :=
    \nablaH T
    +{T^\ast}   \bar{\rho}^\ast_\rd  \, \alpha\,
    D\hat B^\ast
    \bigl[
        \nablaH (T,0,0,0)
    \bigr].
\end{equation*}

With this notation, we consider the fully linearized problem corresponding to \eqref{eq: full moist CPE Lagrange final}, given by
\begin{equation}
    \left\{
    \begin{aligned}
        \del_t \xi
        +\bar\rho_{\rd}^\ast \mathcal I_1(\divH V)
        &= g_1,
        &&\text{ in }\T^2_\tau,\\
        \del_t V
        -\alpha\, \rL_\rH V
        +\frac{T^\ast}{\bar\rho_{\rd}^\ast}\nablaH \xi
        +\mathcal A T
        &= g_2,
        &&\text{ in }\Omega_\tau,\\
        \mathcal H_T[\del_t T]
        -\Delta T
        -\bar{\rho}^\ast_\rd  \, \alpha\,  \mathcal I_z(\divH V)
        +T^\ast e^{z/T^\ast}\mathcal I_1(\divH V)
        &= g_3,
        &&\text{ in }\Omega_\tau,\\
        \del_t Q_{\rv}-\Delta Q_{\rv}
        &= g_4,
        &&\text{ in }\Omega_\tau,\\
        \del_t Q_{\rc}-\Delta Q_{\rc}
        &= g_5,
        &&\text{ in }\Omega_\tau,\\
        \del_t Q_{\rr}-\Delta Q_{\rr}
        &= g_6,
        &&\text{ in }\Omega_\tau,\\
        \xi(0)=\xi_0,\quad
        V(0)=V_0,\quad
        T(0)=T_0,\quad
        Q_j(0)=Q_{j,0} \ \text{ for } j\in\{\rv,\rc,\rr\}.
    \end{aligned}
    \right.
    \label{eq: full moist CPE Lagrange linear}
\end{equation}
with prescribed forcing terms \(g_1,\dots,g_6\). The system is supplemented with the boundary conditions
\begin{equation}\label{eq: bc linear moist}
    \partial_z V|_{z=0,1}=
    \partial_z T|_{z=0,1}=
    \partial_z Q_j|_{z=0,1}=0  \ \text{ for }
     j\in\{\rv,\rc,\rr\}.
\end{equation}
The next part of this section is devoted to proving a well-posedness result concerning the linearized system \eqref{eq: full moist CPE Lagrange linear} subject to the boundary conditions \eqref{eq: bc linear moist}. In view of the regularity class we aim at, we introduce the corresponding forcing and solution spaces directly.
For the density component, we define
\begin{equation*}
    \mathbb E^\rho_0(0,\tau)
    :=
    \rL^2(0,\tau;\rH^2(\T^2))
    \cap
    \rH^1(0,\tau;\rL^2(\T^2)).
\end{equation*}
For the remaining components, we set
\begin{equation*}
    \mathbb E^v_0(0,\tau)
    :=
    \rL^2(0,\tau;\rH^1(\Omega;\R^2))\cap \rH^1(0,\tau;\rL^2(\Omega;\R^2))
    \ \text{ and } \
    \mathbb E^h_0(0,\tau)
    :=
    \rL^2(0,\tau;\rH^1(\Omega))\cap \rH^1(0,\tau;\rL^2(\Omega)).
\end{equation*}
Accordingly, the full forcing space is given by
\begin{equation*}
    \mathbb E_0(0,\tau)
    :=
    \mathbb E^\rho_0(0,\tau)
    \times
    \mathbb E^v_0(0,\tau)
    \times
    \mathbb E^h_0(0,\tau)^4.
\end{equation*}
We further define
\begin{equation*}
    \mathbb E^\rho_1(0,\tau)
    :=
    \rH^1(0,\tau;\rH^2(\T^2))
    \cap
    \rH^2(0,\tau;\rL^2(\T^2))
\end{equation*}
and
for the remaining components, we set
\begin{equation*}
    \mathbb E^v_1(0,\tau)
    :=
    \rH^1(0,\tau;\rH^2(\Omega;\R^2))
    \cap
    \rL^2(0,\tau;\rH^3(\Omega;\R^2))
    \cap
    \rH^2(0,\tau;\rL^2(\Omega;\R^2))
\end{equation*}
as well as 
\begin{equation*}
    \mathbb E^h_1(0,\tau)
    :=
    \rH^1(0,\tau;\rH^2(\Omega))
    \cap
    \rL^2(0,\tau;\rH^3(\Omega))
    \cap
    \rH^2(0,\tau;\rL^2(\Omega)).
\end{equation*}
Accordingly, the full solution space is given by
\begin{equation*}
    \mathbb E_1(0,\tau)
    :=
    \mathbb E^\rho_1(0,\tau)
    \times
    \mathbb E^v_1(0,\tau)
    \times
    \mathbb E^h_1(0,\tau)^4.
\end{equation*}
We are now in a position to state the main result of this section.
\begin{lem}
    \label{lem: linear max reg}
    Let \(\tau>0\), \(\bar\rho_{\rd}^\ast>0\), and \(T^\ast>0\). We assume that the initial data satisfy
\begin{equation*}
   ( \xi_0, V_0, T_0, Q_{j,0})\in \rH^2(\T^2) \times 
 \rH^2(\Omega;\R^2) \times 
 \rH^2(\Omega)^4 \ \text{ for }
  j\in\{\rv,\rc,\rr\},
\end{equation*}
together with the boundary compatibility conditions
\begin{equation*}
    \del_z V_0|_{z=0,1}=
    \del_z T_0|_{z=0,1}=
    \del_z Q_{j,0}|_{z=0,1}=0 \ \text{ for }
 j\in\{\rv,\rc,\rr\}.
\end{equation*}
Moreover, we assume that the first-order compatibility quantities
\(
    (\partial_t \xi,\partial_t V,\partial_t T,\partial_t Q_j)|_{t=0}
\)
computed from the respective right-hand sides of
\eqref{eq: full moist CPE Lagrange linear} by inserting the initial data and
the forcing terms at time \(t=0\), satisfy
\begin{equation*}
   (\partial_t \xi,\partial_t V,\partial_t T,\partial_t Q_j)|_{t=0}
   \in
   \rH^1(\T^2)
   \times
   \rH^1(\Omega;\R^2)
   \times
   \rH^1(\Omega)^4
   \quad
   \text{for } j\in\{\rv,\rc,\rr\}.
\end{equation*}
    Concerning the forcing terms $ (g_1,g_2,g_3,g_4,g_5,g_6)$, we suppose that 
    \[
        (g_1,g_2,g_3,g_4,g_5,g_6)\in \mathbb E_0(0,\tau).
    \]
    Then there exists a unique, global and strong solution
    \begin{equation*}       (\xi,V,T,Q_{\rv},Q_{\rc},Q_{\rr})\in \mathbb E_1(0,\tau)
    \end{equation*}
    to \eqref{eq: full moist CPE Lagrange linear}--\eqref{eq: bc linear moist}. Moreover, there is a constant \(C=C(\tau,\bar\rho_{\rd}^\ast,T^\ast,\Omega)>0\) such that
    \begin{equation*}
        \begin{aligned}
        &\quad\|(\xi,V,T,Q_{\rv},Q_{\rc},Q_{\rr})\|_{\mathbb E_1(0,\tau)}
      \\  &\le
        C\Big(
        \|(g_1,g_2,g_3,g_4,g_5,g_6)\|_{\mathbb E_0(0,\tau)}
 +
        \|\xi_0\|_{\rH^2(\T^2)}
        +
        \|V_0\|_{\rH^2(\Omega)}
        +
        \|T_0\|_{\rH^2(\Omega)}
        +
        \sum_{j\in\{\rv,\rc,\rr\}}
        \|Q_{j,0}\|_{\rH^2(\Omega)}
   \\&     \quad +
        \|(\dt \xi)|_{t=0}\|_{\rH^1(\T^2)}
        +
        \|(\dt V)|_{t=0}\|_{\rH^1(\Omega)}
        +
        \|(\dt T)|_{t=0}\|_{\rH^1(\Omega)}
        +
        \sum_{j\in\{\rv,\rc,\rr\}}
        \|(\dt Q_j)|_{t=0}\|_{\rH^1(\Omega)}
        \Big).
        \end{aligned}
    \end{equation*}
\end{lem}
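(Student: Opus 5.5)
The plan is to exploit the triangular structure of \eqref{eq: full moist CPE Lagrange linear}. The three moisture equations for \(Q_\rv,Q_\rc,Q_\rr\) are decoupled Neumann heat equations. I would treat them first by standard Hilbert-space maximal regularity for the Neumann Laplacian on \(\Omega=\T^2\times(0,1)\), applied on two levels.

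On the lower level, \(g_j\in\rL^2(0,\tau;\rH^1)\) together with \(Q_{j,0}\in\rH^2\) gives \(Q_j\in\rH^1(0,\tau;\rH^1)\cap\rL^2(0,\tau;\rH^3)\). Here I use the Neumann compatibility of \(Q_{j,0}\) and elliptic regularity of the Neumann Laplacian from \(\rH^1\) to \(\rH^3\), which holds because the boundary is flat and one can reflect evenly in \(z\).

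On the upper level, I differentiate in time. The function \(\dt Q_j\) solves the same heat equation with forcing \(\dt g_j\in\rL^2(0,\tau;\rL^2)\) and initial value \((\dt Q_j)|_{t=0}\in\rH^1\). This gives \(\dt Q_j\in\rH^1(0,\tau;\rL^2)\cap\rL^2(0,\tau;\rH^2)\), which yields the full \(\mathbb E^h_1\) norm. All constants depend only on \(\tau\) and \(\Omega\).

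For the coupled block \((\xi,V,T)\), I would follow the dry non-isothermal strategy of \cite{TZ:26}. This block is the same as in \cite{TZ:26}, since \(\mathcal A\), \(\mathcal H_T\), \(\alpha\), \(\beta\) and \(\mathcal I_z\) involve only \(T\) and \(V\), so the proof there can essentially be invoked.

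If I prove it myself, the first step is to show that \(\mathcal H_T\) is an isomorphism on \(\rL^2(0,1)\), and on \(\rH^k\) in \(z\). Write \(\mathcal H_T h=2h-\beta\int_0^1 h\). Integrating gives \(\int_0^1\mathcal H_T h=(2-\int_0^1\beta)\int_0^1 h\). A direct computation shows \(\int_0^1\beta = 1\), so \(\mathcal H_T^{-1}f=\tfrac12\bigl(f+\beta\int_0^1 f\bigr)\) explicitly. Applying this inverse rewrites the temperature equation as
\[
\dt T-\tfrac12\Delta T-\tfrac12\beta\int_0^1\Delta T\,\rd\eta+(\text{zero-order terms in }\divH V)=\mathcal H_T^{-1}g_3 .
\]
Since \(\int_0^1\dz^2T=0\) under the Neumann conditions, the nonlocal part only involves \(\Delta_\rH\bar T\).

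The second step treats \((V,T)\) as a parabolic system with the lower-order perturbations \(\mathcal A T\), \(\nablaH\xi\) and the \(\divH V\)-integrals. Here \(\alpha\,\rL_\rH V\) is strongly elliptic horizontally, with \(\alpha\) smooth and bounded above and below. Note that \(\rL_\rH\) as written contains \(\mu\Delta v\), which includes \(\dz^2\), so the diffusion is full three-dimensional. These perturbations are of first order in \((V,T)\), so they are lower order and can be absorbed by perturbation of maximal regularity, or via a Gronwall argument on finite \(\tau\).

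The third step, and the real obstacle, is the hyperbolic coupling with \(\xi\). The equation \(\dt\xi=-\bar\rho^\ast_\rd\mathcal I_1(\divH V)+g_1\) has no smoothing, while \(\nablaH\xi\) enters the \(V\) equation at first order. I would close this at the two regularity levels by energy estimates for the symmetrized system. Multiplying the \(\xi\)-equation by \(\tfrac{T^\ast}{(\bar\rho_\rd^\ast)^2}\xi\) and the \(V\)-equation by \(\hat B^\ast V\), integrated over \(\Omega\), makes the cross terms \(\int\xi\,\mathcal I_1(\divH V)\) and \(\int\hat B^\ast V\cdot\nablaH\xi\) cancel after horizontal integration by parts. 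I would then apply horizontal derivatives \(\nablaH^k\) (\(k\le 2\)) and one time derivative, using that these commute with the \(z\)-dependent coefficients.

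This yields \(\xi\in\rH^1(0,\tau;\rH^2)\) from \(\dt\xi=-\bar\rho^\ast_\rd\mathcal I_1(\divH V)+g_1\) with \(V\in\rL^2(\rH^3)\), and \(\dt^2\xi\in\rL^2(\rL^2)\) from \(\dt V\in\rL^2(\rH^1)\). Conversely, \(\nablaH\xi\in\rH^1(0,\tau;\rH^1)\) is an admissible forcing for the \(V\) heat equation at the \(\mathbb E_0\) level. The resulting coupled inequality closes with Gronwall on \([0,\tau]\), which is why \(C\) depends on \(\tau\).

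Existence then follows by Galerkin approximation or by the method of continuity in the coupling parameter, combined with these a priori bounds. Uniqueness follows from linearity and the estimate with zero data.
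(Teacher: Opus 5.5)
\textbf{Gap in your second step.} After you apply $\mathcal H_T^{-1}$, the temperature operator is
$-\tfrac12\Delta T-\tfrac12\beta\,\Delta_{\rH}\bar T$, where $\Delta_{\rH}:=\divH\nablaH$. You correctly noticed that the nonlocal part only involves $\Delta_{\rH}\bar T$. But this term is of second order, the same order as the principal part. It is therefore not among the ``lower-order perturbations'' you list, it is not relatively bounded with bound zero, and Gronwall does not absorb it. A naive energy estimate does not absorb it either. Testing $\dt T-\tfrac12\Delta T-\tfrac12\beta\Delta_{\rH}\bar T=\dots$ with $T$ gives the dissipation $\tfrac12\|\nabla T\|_{\rL^2}^2$ against the cross term $\tfrac12\int_{\T^2}\nablaH\bar T\cdot\nablaH\bigl(\int_0^1\beta T\,\rd z\bigr)$. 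The natural bound for that cross term is $\tfrac12\|\beta\|_{\rL^2(0,1)}\|\nablaH T\|_{\rL^2}^2$, which is not smaller, because $\|\beta\|_{\rL^2(0,1)}\ge\int_0^1\beta=1$.

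So this step needs an extra argument. The structure you found supplies it. Integrate the un-inverted temperature equation over $z\in(0,1)$ and use $\int_0^1\mathcal H_T[h]\,\rd z=\bar h$ and $\int_0^1\dz^2 T\,\rd z=0$. This gives
\[
\dt\bar T-\Delta_{\rH}\bar T=\bar g_3+(\text{terms of first order in }V)\quad\text{on }\T^2 ,
\]
a two-dimensional heat equation. Maximal regularity for it at both levels yields $\bar T\in\rL^2(0,\tau;\rH^3(\T^2))$ and $\dt\bar T\in\rL^2(0,\tau;\rH^2(\T^2))$. Hence $\tfrac12\beta\Delta_{\rH}\bar T\in\mathbb E^h_0(0,\tau)$ is an admissible forcing for the Neumann problem $\dt T-\tfrac12\Delta T=\dots$. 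All remaining couplings among $(\xi,V,\bar T,T)$ are then genuinely lower order. The paper handles the same issue abstractly: it views $\mathcal H_T^{-1}(-\Delta+\omega)$ as a product of non-commuting operators and obtains its $\mathcal R$-sectoriality on $\rH^1(\Omega)$ from a commutator theorem. Your averaging makes this step concrete.

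\textbf{Different route for the $(\xi,V)$ coupling.} Apart from this, your route differs from the paper's mainly here. Your moisture block and your time-differentiated upper level coincide with the paper's Step~3. The paper instead sets up the operator matrix on $\rX_0=\rH^2(\T^2)\times\rH^1(\Omega;\R^2)\times\rH^1(\Omega)^4$, with domain $\rH^2(\T^2)\times\rH^3_{\rN}(\Omega;\R^2)\times\rH^3_{\rN}(\Omega)^4$. Because $\xi$ is already measured in $\rH^2(\T^2)$ in the ground space:
\begin{itemize}
\item $\nablaH\xi$ is a bounded perturbation into $\rH^1(\Omega)$;
\item $\bar\rho^\ast_\rd\mathcal I_1(\divH V)$ is an upper-triangular entry, bounded on the velocity domain;
\item $\mathcal A T$ and $\mathcal C V$ have relative bound zero.
\end{itemize}
Maximal $\rL^2$-regularity then gives existence and the lower-level estimate at once, so the hyperbolic coupling is not really an obstacle.

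Your symmetrizer is correct. Since $\alpha\hat B^\ast=1/\bar\rho^\ast_\rd$ is constant, the viscous term is coercive, and the weight $T^\ast/(\bar\rho^\ast_\rd)^2$ makes the cross terms cancel. However, your closing step only uses the same fact the paper uses: $\dt\xi$ is controlled by $\mathcal I_1(\divH V)$, and $\nablaH\xi\in\rH^1(0,\tau;\rH^1)$ is admissible forcing. So a Gronwall or short-time absorption closes without the energy identity. Your approach is more elementary and self-contained but needs a separate existence argument (Galerkin or continuity). The paper gets existence directly from maximal regularity, at the price of relying on abstract $\mathcal R$-sectoriality results.
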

\begin{proof}
We divide the proof into three steps.
First we isolate the nonlocal time-derivative operator appearing
in the temperature equation. More precisely, we show that the operator
\(\mathcal H_T\) is boundedly invertible on the relevant Sobolev scale and that,
after applying \(\mathcal H_T^{-1}\), the resulting temperature operator is an
\(\mathcal R\)-sectorial perturbation of the Neumann Laplacian. This is the only
point where the special nonlocal structure of the temperature equation enters.
In the second step we introduce the full operator matrix associated with
\eqref{eq: full moist CPE Lagrange linear} and prove maximal \(\rL^2\)-regularity.
Lastly, we derive the additional regularity stated in the lemma, 
assuming that the right-hand sides possess the corresponding time regularity
and that the first-order compatibility conditions are satisfied.
\begin{step}{\emph{The nonlocal temperature operator $\mathcal{H}_T$}}\label{step1}\mbox{}\\
We start with the nonlocal operator occurring in the temperature equation.
Define
\[
    (\mathcal P f)(\cdot,z)
    :=
    \beta(z)\int_0^1 f(\cdot,\eta)\,\rd\eta,
    \qquad
    f\in \rH^1(\Omega),
\]
where
\[
    \beta(z)
    =
    \frac{\exp((z-1)/T^\ast)}
    {T^\ast(1-\exp(-1/T^\ast))}.
\]
Then \(\beta\in \rC^\infty([0,1])\) and
\[
    \int_0^1\beta(z)\,\rd z=1.
\]
Thus \(\mathcal P\) is a projection. Indeed,
\[
\begin{aligned}
    (\mathcal P^2 f)(\cdot,z)
    =
    \beta(z)
    \int_0^1
    (\mathcal P f)(\cdot,\eta)\,\rd\eta    =
    \beta(z)
    \int_0^1
    \beta(\eta)
    \left(
        \int_0^1 f(\cdot,\zeta)\,\rd\zeta
    \right)
    \rd\eta                                  =
    \beta(z)
    \int_0^1 f(\cdot,\zeta)\,\rd\zeta
    =
    (\mathcal P f)(\cdot,z).
\end{aligned}
\]
Moreover, \(\mathcal P\in\mathcal L(\rH^1(\Omega))\). This follows from the
smoothness of \(\beta\) and the boundedness of vertical integration on
\(\rH^1(\Omega)\). 
With this notation, the nonlocal time operator is
\[
    \mathcal H_T
    =
    2\,\mathrm{Id}-\mathcal P.
\]
Since \(\mathcal P^2=\mathcal P\), we have
\[
    (2\,\mathrm{Id}-\mathcal P)^{-1}
    =
    \frac12(\mathrm{Id}+\mathcal P).
\]
Hence
\begin{equation}\label{eq: inverse HT moist}
    \mathcal H_T^{-1}
    =
    \frac12(\mathrm{Id}+\mathcal P).
\end{equation}
In particular, \(\mathcal H_T^{-1}\) is bounded on \(\rH^1(\Omega)\).
Furthermore, the spectrum of \(\mathcal H_T^{-1}\) is contained in
\[
    \left\{\frac12,1\right\}.
\]
Indeed, \(\mathcal H_T^{-1}\) acts as the identity on the range of
\(\mathcal P\) and as multiplication by \(1/2\) on the kernel of \(\mathcal P\).
Consequently, for every bounded holomorphic function \(m\) defined on a domain
containing \(\{1/2,1\}\), the holomorphic functional calculus gives
\[
    m(\mathcal H_T^{-1})
    =
    m(1)\mathcal P
    +
    m(1/2)(\mathrm{Id}-\mathcal P).
\]
Thus \(\mathcal H_T^{-1}\) admits a bounded \(\Hinfty\)-calculus on
\(\rH^1(\Omega)\) with angle \(0\). In particular,
\(\mathcal H_T^{-1}\) is \(\mathcal R\)-sectorial on \(\rH^1(\Omega)\) with
\[
    \Phi^{\mathcal R}_{\mathcal H_T^{-1}}=0.
\]
We now apply \(\mathcal H_T^{-1}\) to the temperature equation in
\eqref{eq: full moist CPE Lagrange linear}. After adding and subtracting a
positive shift \(\omega>0\), we obtain
\begin{equation}\label{eq: Temp with HT moist}
\begin{aligned}
    \partial_t T
    +
    \mathcal H_T^{-1}(-\Delta+\omega)T
    -
    \omega\mathcal H_T^{-1}T
    =
    \mathcal H_T^{-1}
    \big (
        g_3
        +
       \bar{\rho}^\ast_\rd  \, \alpha\,\mathcal I_z(\divH V)
        -
        T^\ast e^{z/T^\ast}\mathcal I_1(\divH V)
    \big ).
\end{aligned}
\end{equation}
Our goal is to show that, after a further shift if necessary,
\[
    \mathcal H_T^{-1}(-\Delta+\omega)
\]
is \(\mathcal R\)-sectorial on \(\rH^1(\Omega)\) with angle strictly smaller
than \(\pi/2\). We view it as a product of the two, in general non-commuting,
operators
\[
    A:=\mathcal H_T^{-1},
    \qquad
    B:=-\Delta+\omega.
\]
Here
\[
    B\colon \rH^1(\Omega)\to \rH^1(\Omega),
    \qquad
    \rD(B)=\rH^3_{\rN}(\Omega),
\]
is the Neumann Laplacian shifted by \(\omega\). It is well known that, for
\(\omega>0\) sufficiently large, \(B\) is \(\mathcal R\)-sectorial on
\(\rH^1(\Omega)\) with angle strictly smaller than \(\pi/2\). This follows from
the standard Neumann realization of the Laplacian on the layer, together with
localization and the periodic extension argument in the horizontal variables, as explained on p.~1082 in \cite{HK:16}.
Define
\[
    S:=AB=\mathcal H_T^{-1}(-\Delta+\omega),
    \qquad
    \rD(S):=\rH^3_{\rN}(\Omega).
\]
To verify \(\mathcal R\)-sectoriality of \(S\), we consider the commutator expression
\[
    Z_{A,B}(\lambda,\mu)
    :=
    \big[
        A(\mu+B)^{-1}
        -
        (\mu+B)^{-1}A
    \big](\lambda+A)^{-1}.
\]
We need an estimate of the form
\begin{equation}\label{eq: commutator moist temp}
      \| Z_{A,B}(\lambda,\mu) \|
      \le
      \frac{C}
      {(1+|\lambda|)^{1-a}|\mu|^{1+b}},
      \qquad
      (\lambda,\mu)\in
      \Sigma_\pi\times\Sigma_{\pi-\Phi_B^{\mathcal R}},
\end{equation}
with \(a,b\ge0\) and \(a+b<1\). Since \(A=\mathcal H_T^{-1}\) is bounded and
boundedly invertible on \(\rH^1(\Omega)\), and since
\(\sigma(A)\subset\{1/2,1\}\), there exists \(C_A>0\) such that
\[
    \|(\lambda+A)^{-1}\|
    \le
    \frac{C_A}{1+|\lambda|}
    \qquad
    \text{for all } \lambda\in\Sigma_\pi.
\]
Moreover, the \(\mathcal R\)-sectoriality of \(B\) yields
\[
    \|(\mu+B)^{-1}\|
    \le
    \frac{C_B}{|\mu|}
    \qquad
    \text{for all }
    \mu\in\Sigma_{\pi-\Phi_B^{\mathcal R}}.
\]
Writing \(R_\mu=(\mu+B)^{-1}\), we obtain the crude commutator estimate
\[
    \|AR_\mu-R_\mu A\|
    \le
    2\|A\|\,\|R_\mu\|
    \le
    \frac{C}{|\mu|}.
\]
Together with the bound for \((\lambda+A)^{-1}\), this gives
\[
    \|Z_{A,B}(\lambda,\mu)\|
    \le
    \frac{C}{(1+|\lambda|)|\mu|}.
\]
Thus \eqref{eq: commutator moist temp} holds with \(a=b=0\). Therefore
\cite[Theorem~1.1]{Weber:98} applies and yields sectoriality of \(S\), up to a
shift. Since \(\rH^1(\Omega)\) is a Hilbert space, sectoriality and
\(\mathcal R\)-sectoriality coincide; see, for instance,
\cite[Chapter~4]{PS:16}.
Finally, the lower-order term
\(
    -\omega\mathcal H_T^{-1}T
\)
in \eqref{eq: Temp with HT moist} is bounded on \(\rH^1(\Omega)\). Hence it is
handled by the standard perturbation theorem for \(\mathcal R\)-sectorial
operators. Consequently, there exists \(\omega_1\ge0\) such that the shifted
temperature operator
\[
    \mathcal H_T^{-1}(-\Delta+\omega)
    -
    \omega\mathcal H_T^{-1}
    +
    \omega_1
\]
is \(\mathcal R\)-sectorial on \(\rH^1(\Omega)\) with angle strictly smaller
than \(\pi/2\). This proves the maximal-regularity property for the principal
temperature block.
\end{step}
\begin{step}{\emph{The full operator matrix}}\label{step2}\mbox{}\\
We now consider the full abstract Cauchy problem associated with
\eqref{eq: full moist CPE Lagrange linear}. In view of \autoref{step1}, we apply
\(\mathcal H_T^{-1}\) to the temperature equation and write the system in the
form
\[
    \partial_t \mathbf u+\mathbb A\mathbf u=\mathbf g \ \text{ with } \   \mathbf u
    :=
    (\xi,V,T,Q_{\rv},Q_{\rc},Q_{\rr})^\top.
\]
Define the ground space and the domain by
\[
    \rX_0
    :=
    \rH^2(\T^2)
    \times
    \rH^1(\Omega;\R^2)
    \times
    \rH^1(\Omega)^4 \ \text{ and } \  \rX_1
    :=
    \rH^2(\T^2)
    \times
    \rH^3_{\rN}(\Omega;\R^2)
    \times
    \rH^3_{\rN}(\Omega)^4.
\]
Moreover, after applying \(\mathcal H_T^{-1}\), the right-hand side is understood
as
\[
    \mathbf g
    :=
    \bigl(
        g_1,
        g_2,
        \mathcal H_T^{-1}g_3,
        g_4,
        g_5,
        g_6
    \bigr)^\top.
\]
Since \(\mathcal H_T^{-1}\in\mathcal L(\rH^1(\Omega))\), this modification does
not change the forcing space.
We define
\[
    \mathbb A\colon \rX_1\subset\rX_0\to\rX_0
\]
by the operator matrix
\[
\mathbb A
=
\begin{pmatrix}
0
&
\bar\rho_{\rd}^\ast\,\mathcal I_1(\divH)
&
0
&
0
&
0
&
0
\\ 
\frac{T^\ast}{\bar\rho_{\rd}^\ast}\nablaH
&
-\alpha\, \rL_\rH
&
\mathcal A_T
&
0
&
0
&
0
\\ 
0
&
\mathcal C
&
\mathcal H_T^{-1}(-\Delta+\omega)
    -
    \omega\mathcal H_T^{-1}
&
0
&
0
&
0
\\ 
0
&
0
&
0
&
-\Delta
&
0
&
0
\\ 
0
&
0
&
0
&
0
&
-\Delta
&
0
\\ 
0
&
0
&
0
&
0
&
0
&
-\Delta
\end{pmatrix}.
\]
Here,
the operators
\(\mathcal A_T,\mathcal A_{\rv},\mathcal A_{\rc},\mathcal A_{\rr}\) are defined
through
\[
\begin{aligned}
    \mathcal A_T T
    +
    \mathcal A_{\rv}Q_{\rv}
    +
    \mathcal A_{\rc}Q_{\rc}
    +
    \mathcal A_{\rr}Q_{\rr}:=
    \mathcal{A}(T,Q_\rv,Q_\rc,Q_\rr)
\end{aligned}
\]
and the coupling from the velocity equation into the temperature equation is
\[
    \mathcal C V
    :=
    \mathcal H_T^{-1}
    \big(
        -  \bar{\rho}^\ast_\rd  \, \alpha\,\mathcal I_z(\divH V)
        +
        T^\ast e^{z/T^\ast}\mathcal I_1(\divH V)
    \big).
\]
Now split the full operator matrix $\mathbb A$ into two parts
\(
    \mathbb A=\mathbb A_0+\mathbb B,
\)
where the principal part is
\[
\mathbb A_0
=
\begin{pmatrix}
0
&
\bar\rho_{\rd}^\ast\,\mathcal I_1(\divH)
&
0
&
0
&
0
&
0
\\ 
0
&
-\mu\alpha\Delta-(\mu+\lambda)\alpha\nablaH\divH
&
0
&
0
&
0
&
0
\\ 
0
&
0
&
\mathcal H_T^{-1}(-\Delta+\omega)
    -
    \omega\mathcal H_T^{-1}
&
0
&
0
&
0
\\ 
0
&
0
&
0
&
-\Delta
&
0
&
0
\\ 
0
&
0
&
0
&
0
&
-\Delta
&
0
\\
0
&
0
&
0
&
0
&
0
&
-\Delta
\end{pmatrix},
\]
and the perturbation is
\[
\mathbb B
=
\begin{pmatrix}
0
&
0
&
0
&
0
&
0
&
0
\\ 
\frac{T^\ast}{\bar\rho_{\rd}^\ast}\nablaH
&
0
&
\mathcal A_T
&
0
&
0
&
0
\\ 
0
&
\mathcal C
&
0
&
0
&
0
&
0
\\ 
0
&
0
&
0
&
0
&
0
&
0
\\ 
0
&
0
&
0
&
0
&
0
&
0
\\ 
0
&
0
&
0
&
0
&
0
&
0
\end{pmatrix}.
\]
First, we discuss the principal part. The operator
\[
  -\alpha\, \rL_\rH=  -\mu\alpha\Delta-(\mu+\lambda)\alpha\nablaH\divH,
    \quad
    \rD(-\alpha\, \rL_\rH)=\rH^3_{\rN}(\Omega;\R^2),
\]
is normally elliptic on \(\rH^1(\Omega;\R^2)\) and since $\alpha(z)$
is smooth and bounded away from zero, see also the discussion in \cite[Section 3]{HIRZ:25} and \cite{TZ:26}. Hence, there exists a shift \(\omega_V\ge0\) such that
\(
    \alpha\, \rL_\rH+\omega_V
\)
is \(\mathcal R\)-sectorial on \(\rH^1(\Omega;\R^2)\) with angle strictly less
than \(\pi/2\).
By \autoref{step1}, there exists \(\omega_T\ge0\) such that
\(
    \mathcal H_T^{-1}(-\Delta+\omega)
    -
    \omega\mathcal H_T^{-1}+\omega_T
\)
is \(\mathcal R\)-sectorial on \(\rH^1(\Omega)\) with angle strictly less than
\(\pi/2\). Moreover, the shifted Neumann Laplacian $-\Delta + \omega$ with domain $ \rD(-\Delta + \omega)=\rH^3_{\rN}(\Omega)$
is \(\mathcal R\)-sectorial on \(\rH^1(\Omega)\)
with angle strictly less than \(\pi/2\), see \cite{DHP:03}.
Concerning the upper triangular density coupling
\[
    K V
    :=
    \bar\rho_{\rd}^\ast\mathcal I_1(\divH V).
\]
For \(V\in\rH^3_{\rN}(\Omega;\R^2)\), Jensen's inequality and the smoothness of
\(\hat B^\ast\) give
\[
\begin{aligned}
    \|KV\|_{\rH^2(\T^2)}
    &\le
    C
    \left\|
        \int_0^1
        \hat B^\ast(\eta)\divH V(\cdot,\eta)\,\rd\eta
    \right\|_{\rH^2(\T^2)}
    \\
    &\le
    C\|V\|_{\rH^3(\Omega)}.
\end{aligned}
\]
Hence \(K\) is bounded from \(\rH^3_{\rN}(\Omega;\R^2)\) into
\(\rH^2(\T^2)\). Therefore the first two diagonal blocks of
\(\mathbb A_0\) form an upper triangular operator matrix. Since the diagonal
entries are \(\mathcal R\)-sectorial after a shift and since the off-diagonal
entry \(K\) is bounded with respect to the graph norm of the velocity operator,
the explicit resolvent formula for triangular operator matrices yields that,
after increasing the shift if necessary,
\[
    \mathbb A_0+\omega_0
\]
is \(\mathcal R\)-sectorial on \(\rX_0\) with angle strictly smaller than
\(\pi/2\).
We now estimate the perturbation \(\mathbb B\). Let
\[
    \mathbf u
    =
    (\xi,V,T,Q_{\rv},Q_{\rc},Q_{\rr})^\top
    \in \rX_1.
\]
First, since \(\xi\) is independent of \(z\),
\[
    \left\|
        \frac{T^\ast}{\bar\rho_{\rd}^\ast}\nablaH\xi
    \right\|_{\rH^1(\Omega)}
    \le
    C\|\xi\|_{\rH^2(\T^2)}.
\]
Next, by the definition of
\(
    \mathcal AT
\)
and the boundedness of \(D\hat B^\ast\), we have
\[
\begin{aligned}
    \left\|
        \mathcal A T
    \right\|_{\rH^1(\Omega)}
\le
    C
        \|T\|_{\rH^2(\Omega)}
.
\end{aligned}
\]
Finally, using the boundedness of \(\mathcal H_T^{-1}\) on
\(\rH^1(\Omega)\), the smoothness of \(\hat B^\ast\), and Jensen's inequality,
we infer
\[
\begin{aligned}
    \|\mathcal C V\|_{\rH^1(\Omega)}
    \le
    C
    \left\|
        \mathcal I_z(\divH V)
    \right\|_{\rH^1(\Omega)}
    +
    C
    \left\|
        \mathcal I_1(\divH V)
    \right\|_{\rH^1(\Omega)}
    \le
    C\|V\|_{\rH^2(\Omega)}.
\end{aligned}
\]
Consequently, interpolation yields
\[
\begin{aligned}
    \|\mathbb B\mathbf u\|_{\rX_0}
    &\le
    C\big(
        \|\xi\|_{\rH^2(\T^2)}
        +\|V\|_{\rH^2(\Omega)}
        +\|T\|_{\rH^2(\Omega)}
    \big) \\ &\le
    \varepsilon
    \big(
        \|V\|_{\rH^3(\Omega)}
        +
        \|T\|_{\rH^3(\Omega)}
     \big )
+
    C(\varepsilon)
\big (
        \|\xi\|_{\rH^2(\T^2)}
        +
        \|V\|_{\rH^1(\Omega)}
        +
        \|T\|_{\rH^1(\Omega)}
    \big ) \\ & \le
    \varepsilon
    \|\mathbb A_0\mathbf u\|_{\rX_0}
    +
    C(\varepsilon)\|\mathbf u\|_{\rX_0}.
\end{aligned}
\]
Thus \(\mathbb B\) is relatively \(\mathbb A_0\)-bounded with relative bound
zero. By \cite[Proposition~4.3]{DHP:03}, there
exists \(\omega_\ast\ge0\) such that
\(
    \mathbb A+\omega_\ast
\)
is \(\mathcal R\)-sectorial on \(\rX_0\) with
\(
    \Phi^{\mathcal R}_{\mathbb A+\omega_\ast}<\pi/2.
\)
Hence \(\mathbb A\) has maximal \(\rL^2\)-regularity on \(\rX_0\). 
\end{step}
\begin{step}{\emph{Higher regularity}}\mbox{}\\
It remains to derive the additional time regularity stated in the lemma. From
the maximal-regularity result obtained in the previous step we already know that
\[
    \mathbf u
    =
    (\xi,V,T,Q_{\rv},Q_{\rc},Q_{\rr})
    \in
    \rL^2(0,\tau;\rX_1)
    \cap
    \rH^1(0,\tau;\rX_0).
\]
Thus only the improved regularity
\[
    \xi
    \in
    \rH^2(0,\tau;\rL^2(\T^2))
\]
and
\[
    V,T,Q_j
    \in
    \rH^1(0,\tau;\rH^2(\Omega))
    \cap
    \rH^2(0,\tau;\rL^2(\Omega))
\]
has still to be shown. We differentiate the relevant equations with respect to
time and set
\[
    \zeta:=\partial_t\xi,
    \quad
    W:=\partial_tV,
    \quad
    S:=\partial_tT \ \text{ and }\ 
    R_j:=\partial_tQ_j
    \quad\text{for } j\in\{\rv,\rc,\rr\}.
\]
Since all coefficients in the linearized system are independent of time, the
time derivatives satisfy
\begin{equation}\label{eq: differentiated linear system moist}
    \left\{
    \begin{aligned}
        \partial_t\zeta
        +\bar\rho_{\rd}^\ast\mathcal I_1(\divH W)
        &=
        \partial_t g_1,
        \\
        \partial_t W
        -\alpha\rL_{\rH}W
        +\mathcal A S
        &=
        \partial_t g_2
        -
        \frac{T^\ast}{\bar\rho_{\rd}^\ast}\nablaH \zeta,
        \\
        \mathcal H_T[\partial_t S]
        -\Delta S
        -\bar\rho_{\rd}^\ast\alpha\,\mathcal I_z(\divH W)
        +T^\ast e^{z/T^\ast}\mathcal I_1(\divH W)
        &=
        \partial_t g_3,
        \\
        \partial_t R_{\rv}-\Delta R_{\rv}
        &=
        \partial_t g_4,
        \\
        \partial_t R_{\rc}-\Delta R_{\rc}
        &=
        \partial_t g_5,
        \\
        \partial_t R_{\rr}-\Delta R_{\rr}
        &=
        \partial_t g_6.
    \end{aligned}
    \right.
\end{equation}
The boundary conditions for \(W,S\), and \(R_j\) are again homogeneous Neumann
conditions, since the original unknowns satisfy homogeneous Neumann boundary
conditions for all times.
We first use the regularity already obtained in \autoref{step2}. Since
\[
    \zeta=\partial_t\xi
    \in
    \rL^2(0,\tau;\rH^2(\T^2)),
\]
we have
\[
    \nablaH\zeta
    \in
    \rL^2(0,\tau;\rH^1(\T^2))
    \hookrightarrow
    \rL^2((0,\tau)\times\Omega;\R^2).
\]
Consequently,
\[
    \partial_t g_2
    -
    \frac{T^\ast}{\bar\rho_{\rd}^\ast}\nablaH \zeta
    \in
    \rL^2((0,\tau)\times\Omega;\R^2).
\]
Moreover,
\[
    \partial_tg_3,\partial_tg_j
    \in
   \rL^2((0,\tau) \times \Omega)
    \quad
    \text{for } j\in\{\rv,\rc,\rr\}.
\]
By the compatibility assumption, the initial data
\[
   (W(0),S(0),R_{\rv}(0),R_{\rc}(0),R_{\rr}(0))
\]
computed from \eqref{eq: full moist CPE Lagrange linear} satisfy
\[
   (W(0),S(0),R_{\rv}(0),R_{\rc}(0),R_{\rr}(0))
   \in
   \rH^1(\Omega;\R^2)
   \times
   \rH^1(\Omega)^4 .
\]
We may therefore apply the same lower-order maximal-regularity argument as in
\autoref{step2}, now only to the parabolic subsystem for
\[
    (W,S,R_{\rv},R_{\rc},R_{\rr}),
\]
with \(\zeta\) treated as a given function. 
It remains to improve the time regularity of \(\xi\). From the first equation
in \eqref{eq: differentiated linear system moist} we have
\[
    \partial_t\zeta
    =
    \partial_t g_1
    -
    \bar\rho_{\rd}^\ast\mathcal I_1(\divH W).
\]
Since
\[
    \partial_tg_1\in\rL^2(0,\tau;\rL^2(\T^2)) \ \text{ and }\    W\in\rL^2(0,\tau;\rH^2(\Omega;\R^2)),
\]
we obtain
\[
    \partial_t\zeta
    \in
    \rL^2(0,\tau;\rL^2(\T^2)).
\]
Since \(\zeta=\partial_t\xi\), this gives
\[
    \xi
    \in
    \rH^2(0,\tau;\rL^2(\T^2)).
\]
Combining this with the regularity from \autoref{step2} yields the claimed
regularity.
\end{step}
\end{proof}
\section{Nonlinear estimates}\label{subsec: nonlinear estimates}
\noindent
We now collect the nonlinear estimates needed for the fixed point argument.
In contrast to the dry case treated in \cite{TZ:26}, the right-hand sides in the
temperature and moisture equations contain contributions which are only of first
order in the perturbation variables. More precisely, the temperature equation
contains a linear contribution involving \(\partial_t Q_j\), while the
phase-transition terms contain the order-one switch
\(
    (q_{\rv}^\rL-q_{\rvs}^\rL)^+ .
\)
This creates a genuine difficulty, since these terms cannot be estimated
quadratically in the solution norm in a direct way.
Without loss of generality, we assume in the following that the saturation
mixing ratio vanishes, that is
\[
    q_{\rvs}=0 .
\]
This can be achieved by choosing the reference temperature \(T^\ast\) in the
cutoff regime of the saturation mixing ratio; see also the discussion in
\cite{HKLT:17}. 
The switch term is then incorporated into the left-hand side of the vapor
equation, leading to an auxiliary equation of the form
\[
    \partial_t q_j-\Delta q_j+q_j^+=h .
\]
The corresponding a priori estimate is the key ingredient for recovering
quadratic-order bounds for the remaining order-one terms.
Before dealing with the switch term, we first prove estimates for the genuinely
quadratic nonlinear terms. To this end, we remove the order-one contributions
from the velocity, temperature and moisture equations.
We introduce the linear moisture-time contribution in the temperature equation
by
\[
    \mathcal Q_T[\partial_t Q]
    :=
    \frac{T^\ast}{\hat B^\ast}
    D\hat B^\ast
    \bigl[
        0,\partial_t(q_{\rv}^\rL,q_{\rc}^\rL,q_{\rr}^\rL)
    \bigr]
    +
    \frac{1}{\hat B^\ast}
    \int_0^z
    D\hat B^\ast
    \bigl[
        0,\partial_t(q_{\rv}^\rL,q_{\rc}^\rL,q_{\rr}^\rL)
    \bigr](\cdot,\eta)\,\rd\eta .
\]
In the cutoff regime \(q_{\rvs}=0\), the phase-transition term reduces to
\((q_{\rv}^\rL)^+\). We therefore define 
\begin{equation*}
    \widetilde{G}_v = G_v+  T^\ast \nablaH q_{\rv}^\rL
        +\frac{T^\ast}{\hat B^\ast}
        D\hat B^\ast
        \bigl[
            \nablaH (0, q_{\rv}^\rL, q_{\rc}^\rL, q_{\rr}^\rL)
        \bigr]
\end{equation*}
\[
    \widetilde G_T
    :=
    G_T-\mathcal Q_T[\partial_t Q]-(q_{\rv}^\rL)^+,
\]
and
\[
    \widetilde G_{\rv}
    :=
    G_{\rv}+(q_{\rv}^\rL)^+,
    \quad
    \widetilde G_{\rc}
    :=
    G_{\rc}-(q_{\rv}^\rL)^+ \ \text{ and }\
    \widetilde G_{\rr}
    :=
    G_{\rr}.
\]
Here \(G_{\rr}\) is unchanged. Indeed, the only potentially non-quadratic
positive-part contribution in \(G_{\rr}\) is \((q_{\rc}^\rL-1)^+\), which
vanishes identically on sufficiently small balls in the solution space.
The following lemma precisely estimates these remainders.
\begin{lem}\label{lem: est nonlinear moist}
    Let \(\tau>0\), \(\bar\rho_{\rd}^\ast>0\), and \(T^\ast>0\). There exists
    \(\eps_0=\eps_0(\tau,\bar\rho_{\rd}^\ast,T^\ast,\Omega)>0\) such that for
    every \(\eps\in(0,\eps_0)\) the following holds. Assume that
    \[
        (
            \bar\rho_{\rd}^\rL,
            v^\rL,
            T^\rL,
            q_{\rv}^\rL,
            q_{\rc}^\rL,
            q_{\rr}^\rL
        )
        \in \mathbb B_\eps(0,\tau), 
    \]
    where 
    \begin{equation*}
          \mathbb B_\eps(0,\tau):=   \big\{
        (
            \bar\rho_{\rd}^\rL,
            v^\rL,
            T^\rL,
            q_{\rv}^\rL,
            q_{\rc}^\rL,
            q_{\rr}^\rL
        )\in\mathbb E_1(0,\tau)
        \colon
        \|(
            \bar\rho_{\rd}^\rL,
            v^\rL,
            T^\rL,
            q_{\rv}^\rL,
            q_{\rc}^\rL,
            q_{\rr}^\rL
        )\|_{\mathbb E_1(0,\tau)}\le \eps
        \big\}
    \end{equation*}
    and suppose that $V_r$ is smooth with $\| V_r\|_{\E_1^v(0,\tau)} \leq \eps$
    Then there exists a constant
    \(C=C(\tau,\bar\rho_{\rd}^\ast,T^\ast,\Omega)>0\) such that
    \[
        \big\|
        \big(
            G_\rho,
           \widetilde G_v,
            \widetilde G_T,
            \widetilde G_{\rv},
            \widetilde G_{\rc},
            \widetilde G_{\rr}
        \big)
        (
            \bar\rho_{\rd}^\rL,
            v^\rL,
            T^\rL,
            q_{\rv}^\rL,
            q_{\rc}^\rL,
            q_{\rr}^\rL
        )
        \big \|_{\mathbb E_0(0,\tau)}
        \le
        C\eps^2 .
    \]
\end{lem}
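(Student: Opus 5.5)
The plan is to estimate every term in $(G_\rho,\widetilde G_v,\widetilde G_T,\widetilde G_\rv,\widetilde G_\rc,\widetilde G_\rr)$ separately in the relevant component of $\mathbb E_0(0,\tau)$. After subtracting the order-one contributions, each remaining term is a product of at least two factors that are small in $\mathbb E_1(0,\tau)$; the only exceptions vanish on $\mathbb B_\eps(0,\tau)$. The basic tools are the following.

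\begin{itemize}
\item[(i)] \autoref{lem:ests of trafo moist}, which gives $\|\mathrm Z-\mathrm I_2\|,\|\mathrm J-1\|\le C\eps$, the bounds on $\del_{y_k}\mathrm Z$, and $\|\dt\mathrm Z\|_{\rL^2(0,\tau;\rH^2)}\le C\eps$.
\item[(ii)] The Lipschitz bounds $\|\delta\hat B\|_{\E_1^h}\le C\eps$ and $\|\delta(D\hat B)\|_{\mathrm{op}}\le C\eps$.
\item[(iii)] The embeddings $\mathbb E_1^h\hookrightarrow \rL^\infty(0,\tau;\rH^2)\cap \rC([0,\tau];\rH^2)$, $\del_t(\mathbb E_1^h)\hookrightarrow \rL^\infty(0,\tau;\rH^1)\cap\rL^2(0,\tau;\rH^2)$, and $\rH^2\hookrightarrow\rL^\infty$.
\item[(iv)] Product estimates in $\rH^1(\Omega)$. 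In three dimensions these are $\|fg\|_{\rH^1}\le C\|f\|_{\rH^2}\|g\|_{\rH^1}$ and $\|fg\|_{\rL^2}\le C\|f\|_{\rH^1}\|g\|_{\rH^1}$, the latter via $\rH^1\hookrightarrow\rL^4$.
\end{itemize}

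The norm $\mathbb E_0^h=\rL^2(0,\tau;\rH^1)\cap\rH^1(0,\tau;\rL^2)$ is handled in two parts. For the $\rL^2\rH^1$ part of a product $fg$, put the higher-regularity factor in $\rL^\infty_t\rH^2_x$ and the other in $\rL^2_t\rH^1_x$. For the $\rH^1_t\rL^2_x$ part, write $\dt(fg)=\dt f\,g+f\,\dt g$ and estimate each piece by $\|\dt f\|_{\rL^2\rH^1}\|g\|_{\rL^\infty\rH^1}$ (up to constants). For $G_\rho$ one works on $\T^2$, where $\rH^2(\T^2)$ is an algebra. The factors $\nablaH b^\rL$ and $\mathrm Z^\top-\mathrm I_2$ lie in $\rL^2\rH^2\cap\rH^1\rL^2$ and $\rL^\infty\rH^2$ respectively, with size $C\eps$, by the estimate on $b$ from \autoref{sec: Lagrange moist}. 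The vertical integrals $\int_0^1$ are bounded from $\rH^2(\Omega)$ to $\rH^2(\T^2)$ after one horizontal derivative is lost; the $\nablaH T^\rL\cdot v^\rL$ terms are fine because $v,T\in\rL^2\rH^3$.

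The steps are as follows.

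\textbf{Step 1: vertical velocity.} Estimate $w^\rL$ via \eqref{eq:wL_rhobar_hatB_moist}. Here $J_1^{\mathrm m}$ is linear and of size $C\eps$ in $\rL^2\rH^2\cap\rH^1\rH^1$, since it involves $\dt T^\rL$, $\dt q_j^\rL$ and $\divH v^\rL$. The remainder $J_2^{\mathrm m}$ is quadratic, of size $C\eps^2$. Dividing by $(\bar\rho^\rL_\rd+\bar\rho^\ast_\rd)\hat B^\rL$, which is bounded below, gives $\|w^\rL\|\le C\eps$.

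\textbf{Step 2: transport and metric terms.} Bound $w^\rL\del_z v^\rL$, $\mathrm Z\tilde v^\rL\cdot\nablaH(\cdot)$ and $(\cL_1-\Delta)$, $(\cL_2-\nablaH\divH)$ by \eqref{eq:lapla}. These have the form $(\mathrm Z-\mathrm I)\cdot\nabla^2 f$ plus $\del\mathrm Z\cdot\nabla f$, so they are $C\eps^2$ in $\rL^2\rH^1$. For $\rH^1\rL^2$ one uses $\dt\mathrm Z\in\rL^2\rH^2$ and $\nabla^2\dt f\in\rL^2\rL^2$.

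\textbf{Step 3: coefficient terms in $\widetilde G_v$.} The brackets multiplying $\dt v^\rL$, $\nablaH\bar\rho^\rL_\rd$, $\nablaH T^\rL$, and so on all start at first order. The terms $T^\ast\nablaH q_\rv^\rL$ and $\tfrac{T^\ast}{\hat B^\ast}D\hat B^\ast[\nablaH(0,q)]$ have been removed in $\widetilde G_v$. The sedimentation term $q_\rr^\rL V_\rr^\rL\del_z v^\rL$ is cubic, using the assumption on $V_\rr$.

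\textbf{Step 4: $\mathcal N$ and the rain divergence term.} In \eqref{eq:def-Nw} every numerator of a $J_1^{\mathrm m}$ coefficient is first order. The apparent linear term $\bar\rho_\rd^\ast\hat B^\ast\del_z T^\rL$ still multiplies $J_1^{\mathrm m}$, so the product is quadratic. The $J_2^{\mathrm m}$ terms are already quadratic. Thus $\|\mathcal N\|_{\mathbb E_0^h}\le C\eps^2$, where for $\del_z J_1^{\mathrm m}$ one uses $\dt T^\rL\in\rL^2\rH^2$. In $G_\rr$ the term $\hat B^{\rL,-1}\del_z(\hat B^\rL q^\rL_\rr V^\rL_\rr)$ is quadratic.

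\textbf{Step 5: switch terms.} With $q_\rvs=0$, the terms $(T^\ast+T^\rL)\tfrac{1+q_\rv^\rL}{1+Q_\rm^\rL}(-q_\rv^\rL)^+q_\rr^\rL$, $q_\rv^\rL q_\rc^\rL$ and $q_\rc^\rL q_\rr^\rL$ are quadratic. Using $|\nabla f^+|\le|\nabla f|$ and $|\dt f^+|\le|\dt f|$, a positive part behaves in $\rH^1$ and in $\rH^1_t\rL^2_x$ like the function itself, so these products fit the $\mathbb E_0^h$ scheme above. The term $(q_\rc^\rL-1)^+$ vanishes identically once $C_{\mathrm{emb}}\eps<1$, by \eqref{eq: nondegeneracy T q}.

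The expected main obstacle is the $\rH^1(0,\tau;\rL^2)$ estimates of products in which both factors carry top-order derivatives. Typical examples are $\dt(\mathrm Z\tilde v^\rL\cdot\nablaH v^\rL)$, $\dt(Q_\rm^\rL\dt T^\rL)$ and $\del_z J_2^{\mathrm m}$, which contains $\delta(D\hat B)[\dt T^\rL]$. In the first of these, the piece $\mathrm Z\tilde v^\rL\cdot\nablaH\dt v^\rL$ is handled because $\dt v^\rL\in\rL^2\rH^2$. In $\dt(Q_\rm^\rL\dt T^\rL)$, the piece $Q_\rm^\rL\dt^2T^\rL$ needs $\dt^2T^\rL\in\rL^2\rL^2$ together with $Q_\rm^\rL\in\rL^\infty\rL^\infty$, and the piece $\dt Q_\rm^\rL\dt T^\rL$ needs the $\rL^4$–$\rL^4$ product with $\dt Q_\rm^\rL,\dt T^\rL\in\rL^\infty\rH^1$. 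Each such term has to be distributed carefully across $\rL^\infty_t$ and $\rL^2_t$. The third bullet of the embedding list, $\dt(\mathbb E_1^h)\hookrightarrow\rL^\infty(0,\tau;\rH^1)$ (valid because $\dt f\in\rH^1\rL^2\cap\rL^2\rH^2$), is exactly what makes this distribution possible.
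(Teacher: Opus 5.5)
\textbf{There is a genuine gap in Step 1, and it breaks the $\rH^1(0,\tau;\rL^2)$-estimates of every vertical-advection term.}

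\emph{What is false.} You claim that $J_1^{\mathrm m}$ (and implicitly $J_2^{\mathrm m}$) is of size $C\eps$ in $\rL^2\rH^2\cap\rH^1\rH^1$. This does not hold.
\begin{itemize}
\item $\dt J_1^{\mathrm m}$ contains $\bar\rho_{\rd}^\ast\int_0^z D\hat B^\ast[\dt^2 T^\rL,\dt^2 q_{\rv}^\rL,\dots](\cdot,\eta)\,\rd\eta$, whose leading part is $-\frac{1}{T^\ast}\int_0^z \hat B^\ast\,\dt^2 T^\rL\,\rd\eta$.
\item $\mathbb E_1^h$ only gives $\dt^2 T^\rL,\dt^2 q_j^\rL\in\rL^2(0,\tau;\rL^2)$. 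The vertical integral gains $z$-regularity but no horizontal regularity.
\item Likewise $J_2^{\mathrm m}\notin\rL^2\rH^2$, because of the term $\tilde v^\rL\cdot\mathrm Z^\top\nablaH\bar\rho_{\rd}^\rL$ with $\bar\rho_{\rd}^\rL$ only $\rH^2$ in space.
\end{itemize}
What is actually true is that $(\bar\rho_{\rd}\hat Bw)^\rL$ and $\dz(\bar\rho_{\rd}\hat Bw)^\rL$ lie in $\rL^2\rH^1\cap\rH^1\rL^2$. So $\dt w^\rL$ is merely $\rL^2$ in the horizontal variables, with one $z$-derivative in $\rL^2$.

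\emph{Why your scheme fails.} Your product rule $\|\dt f\|_{\rL^2\rH^1}\|g\|_{\rL^\infty\rH^1}$ cannot handle the piece $\dt w^\rL\,\dz v^\rL$ of the time derivative of the vertical transport term in $\widetilde G_v$. The same applies to $\dt w^\rL\,\dz T^\rL$ in $\widetilde G_T$ and $\dt w^\rL\,\dz q_j^\rL$ in $\widetilde G_j$. In three dimensions an $\rL^2$ function times an $\rH^1$ function need not be in $\rL^2$, and $\dz v^\rL\in\rL^\infty\rH^1$ is not bounded. This term, not the ones you list as the main obstacle, is the real difficulty.

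\emph{The missing idea is the anisotropic estimate the paper uses.}
\begin{itemize}
\item Since $(\bar\rho_{\rd}\hat Bw)^\rL$ is a vertical integral starting at $z=0$, it vanishes there. Hence $\|\dt(\bar\rho_{\rd}\hat Bw)^\rL\|_{\rL^2_\tau\rL^2_\rH\rL^\infty_z}\le C\|\dz\dt(\bar\rho_{\rd}\hat Bw)^\rL\|_{\rL^2_\tau\rL^2(\Omega)}\le C\eps$.
\item This must be paired with $\dz v^\rL\in\rL^\infty_\tau\rL^\infty_\rH\rL^2_z$, which requires $v^\rL\in\rL^\infty_\tau\rH^{2+\delta}$ for some $\delta>0$.
\item Your embedding list (iii), with only $\rL^\infty\rH^2$, does not give this. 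The paper obtains it from $\rH^2_\tau\rL^2\cap\rL^2_\tau\rH^3\hookrightarrow\rL^\infty_\tau\rH^{9/4}$.
\item The other piece, $w^\rL\,\dz\dt v^\rL$, is fine using $(\bar\rho_{\rd}\hat Bw)^\rL\in\rL^\infty_\tau\rL^3$, which follows from $\rL^2\rH^1\cap\rH^1\rL^2$.
\item The same pairing treats $\nabla(\bar\rho_{\rd}\hat Bw)^\rL\,\dz v^\rL$ in the $\rL^2\rH^1$-part. It also treats the analogous $T^\rL$ and $q_j^\rL$ terms after dividing by $(\bar\rho_{\rd}^\rL+\bar\rho_{\rd}^\ast)\hat B^\rL$.
\end{itemize}

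With this repair, the rest of your plan matches the paper's argument: term-by-term product estimates using the transform bounds, the Lipschitz bounds for $\delta\hat B$ and $\delta(D\hat B)$, the quadratic structure of $\mathcal N$, positive parts via $\nabla f^+=\mathbf 1_{\{f>0\}}\nabla f$, and $(q_{\rc}^\rL-1)^+\equiv0$ on the small ball.
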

\begin{proof}
We begin by collecting a few auxiliary estimates that will be used repeatedly
below. Let
\[
    U^\rL
    :=
    (T^\rL,q_{\rv}^\rL,q_{\rc}^\rL,q_{\rr}^\rL).
\]
By the discussion in \autoref{sec Lagrange model}, the smoothness of the functional
\(\hat B\) around the reference state implies
\[
    \| \delta\hat B \|_{\mathbb E^h_1(0,\tau)}
    \leq C\eps .
\]
Moreover, the corresponding perturbation of the Fréchet derivative satisfies
\[
    \| \delta(D\hat B) \|_{\mathrm{op}}
    \leq C\eps .
\]
Here \(\|\cdot\|_{\mathrm{op}}\) denotes the operator norm in the Sobolev spaces
used below. Throughout the proof, we will repeatedly use the estimates on the transform that have been established in \autoref{lem:ests of trafo moist}.
We next estimate the averaged horizontal velocity \(b^\rL\).
Using the boundedness of vertical integration and the algebra property of
\(\rH^2(\T^2)\), we obtain
\[
    \| b^\rL \|_{\rH^1_\tau\rH^2(\T^2)}
    +
    \| b^\rL \|_{\rL^2_\tau\rH^3(\T^2)} + \| b^\rL \|_{\rH^2_\tau\rL^2(\T^2)}
    \leq C\eps .
\]
We now turn to the vertical velocity contribution. Recall that
\[
    \bigl((\bar\rho_{\rd}^\rL+\bar\rho_{\rd}^\ast)
    \hat B^\rL w^\rL\bigr)
    =
    J_1^{\mathrm m}+J_2^{\mathrm m}.
\]
We first estimate \(J_1^{\mathrm m}\). By the explicit formula for \(J_1^{\mathrm m}\), the boundedness of vertical
integration, and the smoothness of \(\hat B^\ast\), we obtain
\[
\begin{aligned}
    \|J_1^{\mathrm m}\|_{\rL^2_\tau\rH^2(\Omega)}
    +
    \|\del_zJ_1^{\mathrm m}\|_{\rL^2_\tau\rH^2(\Omega)}
    \leq
    C\big(
        \|\del_t T^\rL\|_{\rL^2_\tau\rH^2(\Omega)}
        +
        \sum_{j\in\{\rv,\rc,\rr\}}
        \|\del_t q_j^\rL\|_{\rL^2_\tau\rH^2(\Omega)}
        +
        \|v^\rL\|_{\rL^2_\tau\rH^3(\Omega)}
    \big)
    \leq C\eps .
\end{aligned}
\]
Moreover, differentiating \(J_1^{\mathrm m}\) with respect to time gives
\[
\begin{aligned}
    \|\del_tJ_1^{\mathrm m}\|_{\rL^2_\tau\rL^2(\Omega)}
    +
    \|\del_t\del_zJ_1^{\mathrm m}\|_{\rL^2_\tau\rL^2(\Omega)}
    \leq
    C\big(
        \|\del_{tt} T^\rL\|_{\rL^2_\tau\rL^2(\Omega)}
        +
        \sum_{j\in\{\rv,\rc,\rr\}}
        \|\del_{tt} q_j^\rL\|_{\rL^2_\tau\rL^2(\Omega)}
        +
        \|\del_t v^\rL\|_{\rL^2_\tau\rH^2(\Omega)}
    \big)
    \leq C\eps .
\end{aligned}
\]
Consequently,
\[
    \|J_1^{\mathrm m}\|_{\rL^2_\tau\rH^2(\Omega)}
    +
    \|J_1^{\mathrm m}\|_{\rH^1_\tau\rL^2(\Omega)}
    +
    \|\del_zJ_1^{\mathrm m}\|_{\rL^2_\tau\rH^2(\Omega)}
    +
    \|\del_zJ_1^{\mathrm m}\|_{\rH^1_\tau\rL^2(\Omega)}
    \leq C\eps .
\]
Next, to obtain an estimate of the remainder $J_2^\rm$, we calculate it explicitly, yielding
\begin{equation}\label{eq:J2-moist}
    \begin{aligned}
        J_2^{\mathrm m}
        &:=
        -\int_0^z
        \Big[
            (\hat B^\ast+\delta\hat B)
            \big(
                \widetilde v^\rL\cdot \mathrm Z^\top\nablaH\bar\rho_{\rd}^\rL
                +
                (\bar\rho_{\rd}^\rL+\bar\rho_{\rd}^\ast)
                \nablaH\widetilde v^\rL:(\mathrm Z^\top-\mathrm I_2)
            \big)
            \\
            &\quad
            +
            \hat B^\ast\bar\rho_{\rd}^\rL
            \divH\widetilde v^\rL
            +
            \delta\hat B\,\bar\rho_{\rd}^\rL
            \divH\widetilde v^\rL
            +
            \delta\hat B\,\bar\rho_{\rd}^\ast
            \divH\widetilde v^\rL
            \\
            &\quad
            -
            \hat B^\ast\bar\rho_{\rd}^\ast
            \int_0^1
            \Big[
                \bigl(D\hat B^\ast+\delta(D\hat B)\bigr)
                [\nablaH U^\rL]\cdot v^\rL
                +
                \delta\hat B\,\divH v^\rL
            \Big](\cdot,\zeta)\,\rd\zeta
            \\
            &\quad
            +
            \bar\rho_{\rd}^\rL
            \bigl(D\hat B^\ast+\delta(D\hat B)\bigr)
            \Big[
                \dt U^\rL
                +
                \mathrm Z\widetilde v^\rL\cdot\nablaH U^\rL
            \Big]
            +
            \bar\rho_{\rd}^\ast
            \delta(D\hat B)
            [\dt U^\rL]
            \\
            &\quad
            +
            \bar\rho_{\rd}^\ast
            \bigl(D\hat B^\ast+\delta(D\hat B)\bigr)
            \Big[
                \mathrm Z\widetilde v^\rL\cdot\nablaH U^\rL
            \Big]
        \Big](\cdot,\eta)\,\rd\eta .
    \end{aligned}
\end{equation}
We now prove the corresponding \(\rH^1_\tau\rL^2\)-estimate.
We discuss the terms in \(\dt J_2^{\mathrm m}\) which are representative for
the estimate. Differentiating the first contribution gives
\[
\begin{aligned}
    \dt
    \Big[
        (\hat B^\ast+\delta\hat B)
        \widetilde v^\rL\cdot \mathrm Z^\top\nablaH\bar\rho_{\rd}^\rL
    \Big]
    &=
    \dt\delta\hat B\,
    \widetilde v^\rL\cdot \mathrm Z^\top\nablaH\bar\rho_{\rd}^\rL
    +
    (\hat B^\ast+\delta\hat B)
    \dt\widetilde v^\rL\cdot \mathrm Z^\top\nablaH\bar\rho_{\rd}^\rL
    \\
    &\quad
    +
    (\hat B^\ast+\delta\hat B)
    \widetilde v^\rL\cdot \dt\mathrm Z^\top\nablaH\bar\rho_{\rd}^\rL
    +
    (\hat B^\ast+\delta\hat B)
    \widetilde v^\rL\cdot \mathrm Z^\top\nablaH\dt\bar\rho_{\rd}^\rL .
\end{aligned}
\]
Hence
\[
\begin{aligned}
    \big\|
    \dt
    \Big[
        (\hat B^\ast+\delta\hat B)
        \widetilde v^\rL\cdot \mathrm Z^\top\nablaH\bar\rho_{\rd}^\rL
    \Big]
    \big\|_{\rL^2_\tau\rL^2(\Omega)}
    &\leq
    C \big (
    \|\dt\delta\hat B\|_{\rL^2_\tau\rH^2}
    \|\widetilde v^\rL\|_{\rL^\infty_\tau\rH^2}
    \|\bar\rho_{\rd}^\rL\|_{\rL^\infty_\tau\rH^2}
    +
    \|\dt\widetilde v^\rL\|_{\rL^2_\tau\rH^2}
    \|\bar\rho_{\rd}^\rL\|_{\rL^\infty_\tau\rH^2}
    \\
    &\quad
    +
    \|\dt\mathrm Z\|_{\rL^2_\tau\rH^2}
    \|\widetilde v^\rL\|_{\rL^\infty_\tau\rH^2}
    \|\bar\rho_{\rd}^\rL\|_{\rL^\infty_\tau\rH^2}
    +
    \|\widetilde v^\rL\|_{\rL^\infty_\tau\rH^2}
    \|\dt\bar\rho_{\rd}^\rL\|_{\rL^2_\tau\rH^2} \big )
    \\
    &\leq C\eps^2 .
\end{aligned}
\]
Similarly
\[
\begin{aligned}
  &\quad  \big\|
    \dt
    \Big[
        (\hat B^\ast+\delta\hat B)
        (\bar\rho_{\rd}^\rL+\bar\rho_{\rd}^\ast)
        \nablaH\widetilde v^\rL:(\mathrm Z^\top-\mathrm I_2)
    \Big]
    \big\|_{\rL^2_\tau\rL^2(\Omega)}
   \\ &\leq
    C \big (
    \|\dt\delta\hat B\|_{\rL^2_\tau\rH^2}
    \|\nablaH\widetilde v^\rL\|_{\rL^\infty_\tau\rH^1}
    \|\mathrm Z^\top-\mathrm I_2\|_{\rL^\infty_\tau\rH^2}
    +
    \|\dt\bar\rho_{\rd}^\rL\|_{\rL^2_\tau\rH^2}
    \|\nablaH\widetilde v^\rL\|_{\rL^\infty_\tau\rH^1}
    \|\mathrm Z^\top-\mathrm I_2\|_{\rL^\infty_\tau\rH^2}
    \\
    &\quad
    +
    \|\dt\widetilde v^\rL\|_{\rL^2_\tau\rH^2}
    \|\mathrm Z^\top-\mathrm I_2\|_{\rL^\infty_\tau\rH^2}
    +
    \|\nablaH\widetilde v^\rL\|_{\rL^\infty_\tau\rH^1}
    \|\dt\mathrm Z\|_{\rL^2_\tau\rH^2} \big )
    \\
    &\leq C\eps^2 .
\end{aligned}
\]
All other contributions contain either \(\dt\delta\hat B\),
\(\dt\bar\rho_{\rd}^\rL\), or \(\dt\mathrm Z\), and are estimated in the same
way.
Next, the divergence terms give
\[
\begin{aligned}
    \dt
    \bigl [
        \hat B^\ast\bar\rho_{\rd}^\rL\divH\widetilde v^\rL
    \bigr ]
    &=
    \hat B^\ast\dt\bar\rho_{\rd}^\rL\divH\widetilde v^\rL
    +
    \hat B^\ast\bar\rho_{\rd}^\rL\divH\dt\widetilde v^\rL ,
\end{aligned}
\]
and therefore
\[
\begin{aligned}
    \big \|
        \dt
        \bigl [
            \hat B^\ast\bar\rho_{\rd}^\rL\divH\widetilde v^\rL
        \bigr ]
    \big\|_{\rL^2_\tau\rL^2(\Omega)}
    \leq
    C \big (
    \|\dt\bar\rho_{\rd}^\rL\|_{\rL^2_\tau\rH^2}
    \|\widetilde v^\rL\|_{\rL^\infty_\tau\rH^2}
    +
    \|\bar\rho_{\rd}^\rL\|_{\rL^\infty_\tau\rH^2}
    \|\dt\widetilde v^\rL\|_{\rL^2_\tau\rH^2}\big ) \leq C\eps^2 .
\end{aligned}
\]
The two terms containing \(\delta\hat B\divH\widetilde v^\rL\) are estimated
identically.
We now consider the averaged contribution
\[
    \int_0^1
    \Big[
        \bigl(D\hat B^\ast+\delta(D\hat B)\bigr)
        [\nablaH U^\rL]\cdot v^\rL
        +
        \delta\hat B\,\divH v^\rL
    \Big](\cdot,\zeta)\,\rd\zeta .
\]
For the frozen part we obtain
\[
\begin{aligned}
    \dt
    \big [
        D\hat B^\ast[\nablaH U^\rL]\cdot v^\rL
    \big ]
    =
    D\hat B^\ast[\nablaH\dt U^\rL]\cdot v^\rL
    +
    D\hat B^\ast[\nablaH U^\rL]\cdot\dt v^\rL ,
\end{aligned}
\]
and hence
\[
\begin{aligned}
    \big \|
        \dt
        \big [
            D\hat B^\ast[\nablaH U^\rL]\cdot v^\rL
        \big ]
    \big \|_{\rL^2_\tau\rL^2(\Omega)}
    \leq
    \big (
    \|\dt U^\rL\|_{\rL^2_\tau\rH^2}
    \|v^\rL\|_{\rL^\infty_\tau\rH^2}
    +
    \|U^\rL\|_{\rL^\infty_\tau\rH^2}
    \|\dt v^\rL\|_{\rL^2_\tau\rH^2} \big )
    \leq C\eps^2 .
\end{aligned}
\]
The corresponding term with \(\delta(D\hat B)\) is controlled by the operator
estimate for \(\delta(D\hat B)\), namely
\[
\begin{aligned}
    &\quad\big \|
        \delta(D\hat B)[\nablaH U^\rL]\cdot v^\rL
    \big\|_{\rH^1_\tau\rL^2(\Omega)}
   \\& \leq
    C \big (
    \|\delta(D\hat B)[\nablaH U^\rL]\|_{\rH^1_\tau\rL^2(\Omega)}
    \|v^\rL\|_{\rL^\infty_\tau\rH^2(\Omega)}
    +
    \|\delta(D\hat B)[\nablaH U^\rL]\|_{\rL^\infty_\tau\rH^1(\Omega)}
    \|\dt v^\rL\|_{\rL^2_\tau\rH^1(\Omega)} \big )
\leq C\eps^2 .
\end{aligned}
\]
Also,
\[
\begin{aligned}
    \big \|
        \dt(\delta\hat B\,\divH v^\rL)
    \big \|_{\rL^2_\tau\rL^2(\Omega)}
    \leq
    C \big (
    \|\dt\delta\hat B\|_{\rL^2_\tau\rH^2}
    \|v^\rL\|_{\rL^\infty_\tau\rH^2}
    +
    \|\delta\hat B\|_{\rL^\infty_\tau\rH^2}
    \|\dt v^\rL\|_{\rL^2_\tau\rH^2}
    \leq C\eps^2 \big ) .
\end{aligned}
\]
It remains to discuss the terms involving \(\dt U^\rL\) and the advective
quantity
\(
    \mathrm Z\widetilde v^\rL\cdot\nablaH U^\rL .
\)
For the first one we use the product rule:
\[
\begin{aligned}
    \dt
    \big[
        \bar\rho_{\rd}^\rL D\hat B^\ast[\dt U^\rL]
    \big]
    &=
    \dt\bar\rho_{\rd}^\rL D\hat B^\ast[\dt U^\rL]
    +
    \bar\rho_{\rd}^\rL D\hat B^\ast[\dt^2 U^\rL].
\end{aligned}
\]
Thus
\[
\begin{aligned}
    \big \|
        \dt
        \big[
            \bar\rho_{\rd}^\rL D\hat B^\ast[\dt U^\rL]
        \big]
    \big\|_{\rL^2_\tau\rL^2(\Omega)}
    \leq
    C \big (
    \|\dt\bar\rho_{\rd}^\rL\|_{\rL^2_\tau\rH^2}
    \|\dt U^\rL\|_{\rL^\infty_\tau\rH^1}
    +
    \|\bar\rho_{\rd}^\rL\|_{\rL^\infty_\tau\rH^2}
    \|\dt^2 U^\rL\|_{\rL^2_\tau\rL^2} \big )
\leq C\eps^2 .
\end{aligned}
\]
The term
\(
    \bar\rho_{\rd}^\ast\delta(D\hat B)[\dt U^\rL]
\)
is estimated directly by the operator estimate for \(\delta(D\hat B)\), that is,
\[
    \big \|
        \delta(D\hat B)[\dt U^\rL]
    \big \|_{\rH^1_\tau\rL^2(\Omega)}
    \leq C\eps
    \|\dt U^\rL\|_{\rH^1_\tau\rL^2(\Omega)}
    \leq C\eps^2 .
\]
Finally, set
\(
    A^\rL
    :=
    \mathrm Z\widetilde v^\rL\cdot\nablaH U^\rL .
\)
Then
\[
    \dt A^\rL
    =
    \dt\mathrm Z\,\widetilde v^\rL\cdot\nablaH U^\rL
    +
    \mathrm Z\dt\widetilde v^\rL\cdot\nablaH U^\rL
    +
    \mathrm Z\widetilde v^\rL\cdot\nablaH\dt U^\rL .
\]
Consequently,
\[
\begin{aligned}
   &\quad \|\dt A^\rL\|_{\rL^2_\tau\rL^2(\Omega)}
   \\& \leq
    C \big 
    \|\dt\mathrm Z\|_{\rL^2_\tau\rH^2}
    \|\widetilde v^\rL\|_{\rL^\infty_\tau\rH^2}
    \|U^\rL\|_{\rL^\infty_\tau\rH^2}
    +
    \|\dt\widetilde v^\rL\|_{\rL^2_\tau\rH^2}
    \|U^\rL\|_{\rL^\infty_\tau\rH^2}
    +
    \|\widetilde v^\rL\|_{\rL^\infty_\tau\rH^2}
    \|\dt U^\rL\|_{\rL^2_\tau\rH^2} \big )
  \leq C\eps^2 .
\end{aligned}
\]
It follows that
\[
    \big \|
        \dt
        \big [
            (\bar\rho_{\rd}^\rL+\bar\rho_{\rd}^\ast)
            \bigl(D\hat B^\ast+\delta(D\hat B)\bigr)[A^\rL]
        \big ]
    \big\|_{\rL^2_\tau\rL^2(\Omega)}
    \leq C\eps^2 .
\]
Combining the preceding estimates and using once more the boundedness of
vertical integration on \(\rL^2(\Omega)\), we conclude that
\[
    \|\dt J_2^{\mathrm m}\|_{\rL^2_\tau\rL^2(\Omega)}
    \leq C\eps^2 .
\]
We now prove the corresponding \(\rL^2_\tau\rH^1\)-estimate. We discuss the
terms in \(J_2^{\mathrm m}\) which are representative for the estimate. For
the first contribution, the spatial product rule gives
\[
\begin{aligned}
    \nabla
    \Big[
        (\hat B^\ast+\delta\hat B)
        \widetilde v^\rL\cdot \mathrm Z^\top\nablaH\bar\rho_{\rd}^\rL
    \Big]
    &=
    \nabla\delta\hat B\,
    \widetilde v^\rL\cdot \mathrm Z^\top\nablaH\bar\rho_{\rd}^\rL
    +
    (\hat B^\ast+\delta\hat B)
    \nabla\widetilde v^\rL\cdot \mathrm Z^\top\nablaH\bar\rho_{\rd}^\rL
    \\
    &\quad
    +
    (\hat B^\ast+\delta\hat B)
    \widetilde v^\rL\cdot \nabla\mathrm Z^\top\nablaH\bar\rho_{\rd}^\rL
    +
    (\hat B^\ast+\delta\hat B)
    \widetilde v^\rL\cdot \mathrm Z^\top\nabla\nablaH\bar\rho_{\rd}^\rL .
\end{aligned}
\]
Hence
\[
\begin{aligned}
    \big\|
    \nabla
    \Big[
        (\hat B^\ast+\delta\hat B)
        \widetilde v^\rL\cdot \mathrm Z^\top\nablaH\bar\rho_{\rd}^\rL
    \Big]
    \big\|_{\rL^2_\tau\rL^2(\Omega)}
    &\leq
    C \big (
    \|\delta\hat B\|_{\rL^\infty_\tau\rH^2}
    \|\widetilde v^\rL\|_{\rL^\infty_\tau\rH^2}
    \|\bar\rho_{\rd}^\rL\|_{\rL^2_\tau\rH^3}
    +
    \|\widetilde v^\rL\|_{\rL^2_\tau\rH^2}
    \|\bar\rho_{\rd}^\rL\|_{\rL^\infty_\tau\rH^2}
    \\
    &\quad
    +
    \|\mathrm Z\|_{\rL^\infty_\tau\rH^2}
    \|\widetilde v^\rL\|_{\rL^\infty_\tau\rH^2}
    \|\bar\rho_{\rd}^\rL\|_{\rL^2_\tau\rH^2}
    +
    \|\widetilde v^\rL\|_{\rL^\infty_\tau\rH^2}
    \|\bar\rho_{\rd}^\rL\|_{\rL^2_\tau\rH^2} \big )
    \\
    &\leq C\eps^2 .
\end{aligned}
\]
Together with the corresponding \(\rL^2_\tau\rL^2\)-estimate, this yields
\[
    \big\|
        (\hat B^\ast+\delta\hat B)
        \widetilde v^\rL\cdot \mathrm Z^\top\nablaH\bar\rho_{\rd}^\rL
    \big\|_{\rL^2_\tau\rH^1(\Omega)}
    \leq C\eps^2 .
\]
Similarly, for the second transport contribution we obtain
\[
\begin{aligned}
    &\quad
    \big\|
    \nabla
    \Big[
        (\hat B^\ast+\delta\hat B)
        (\bar\rho_{\rd}^\rL+\bar\rho_{\rd}^\ast)
        \nablaH\widetilde v^\rL:(\mathrm Z^\top-\mathrm I_2)
    \Big]
    \big\|_{\rL^2_\tau\rL^2(\Omega)}
    \\
    &\leq
    C \big (
    \|\delta\hat B\|_{\rL^\infty_\tau\rH^2}
    \|\nablaH\widetilde v^\rL\|_{\rL^2_\tau\rH^2}
    \|\mathrm Z^\top-\mathrm I_2\|_{\rL^\infty_\tau\rH^2}
    +
    \|\bar\rho_{\rd}^\rL\|_{\rL^\infty_\tau\rH^2}
    \|\nablaH\widetilde v^\rL\|_{\rL^2_\tau\rH^2}
    \|\mathrm Z^\top-\mathrm I_2\|_{\rL^\infty_\tau\rH^2}
    \\
    &\quad
+
    \|\nablaH\widetilde v^\rL\|_{\rL^2_\tau\rH^2}
    \|\mathrm Z^\top-\mathrm I_2\|_{\rL^\infty_\tau\rH^2}
    +
    \|\nablaH\widetilde v^\rL\|_{\rL^\infty_\tau\rH^1}
    \|\mathrm Z^\top-\mathrm I_2\|_{\rL^2_\tau\rH^3} \big )
    \\
    &\leq C\eps^2 .
\end{aligned}
\]
Thus
\[
    \big\|
        (\hat B^\ast+\delta\hat B)
        (\bar\rho_{\rd}^\rL+\bar\rho_{\rd}^\ast)
        \nablaH\widetilde v^\rL:(\mathrm Z^\top-\mathrm I_2)
    \big\|_{\rL^2_\tau\rH^1(\Omega)}
    \leq C\eps^2 .
\]
Next, the divergence terms are estimated in the same way. For instance,
\[
\begin{aligned}
    \nabla
    \bigl[
        \hat B^\ast\bar\rho_{\rd}^\rL\divH\widetilde v^\rL
    \bigr]
    =
    \hat B^\ast\nabla\bar\rho_{\rd}^\rL\divH\widetilde v^\rL
    +
    \hat B^\ast\bar\rho_{\rd}^\rL\nabla\divH\widetilde v^\rL ,
\end{aligned}
\]
and therefore
\[
\begin{aligned}
    \big \|
        \nabla
        \bigl[
            \hat B^\ast\bar\rho_{\rd}^\rL\divH\widetilde v^\rL
        \bigr]
    \big\|_{\rL^2_\tau\rL^2(\Omega)}
    \leq
    C \big (
    \|\bar\rho_{\rd}^\rL\|_{\rL^\infty_\tau\rH^2}
    \|\widetilde v^\rL\|_{\rL^2_\tau\rH^3}
    +
    \|\bar\rho_{\rd}^\rL\|_{\rL^\infty_\tau\rH^2}
    \|\widetilde v^\rL\|_{\rL^\infty_\tau\rH^2}
    \leq C\eps^2 \big ).
\end{aligned}
\]
The two terms containing \(\delta\hat B\divH\widetilde v^\rL\) are controlled
identically, since
\[
\begin{aligned}
    \big\|
        \delta\hat B\,\bar\rho_{\rd}^\rL\divH\widetilde v^\rL
    \big\|_{\rL^2_\tau\rH^1(\Omega)}
    +
    \big\|
        \delta\hat B\,\bar\rho_{\rd}^\ast\divH\widetilde v^\rL
    \big\|_{\rL^2_\tau\rH^1(\Omega)}
    \leq C\eps^2 .
\end{aligned}
\]
We now consider the averaged contribution
\[
    \int_0^1
    \Big[
        \bigl(D\hat B^\ast+\delta(D\hat B)\bigr)
        [\nablaH U^\rL]\cdot v^\rL
        +
        \delta\hat B\,\divH v^\rL
    \Big](\cdot,\zeta)\,\rd\zeta .
\]
For the frozen part, the spatial product rule gives
\[
\begin{aligned}
    \nabla
    \big [
        D\hat B^\ast[\nablaH U^\rL]\cdot v^\rL
    \big ]
    =
    D\hat B^\ast[\nabla\nablaH U^\rL]\cdot v^\rL
    +
    D\hat B^\ast[\nablaH U^\rL]\cdot\nabla v^\rL ,
\end{aligned}
\]
and hence
\[
\begin{aligned}
    \big \|
        \nabla
        \big [
            D\hat B^\ast[\nablaH U^\rL]\cdot v^\rL
        \big ]
    \big \|_{\rL^2_\tau\rL^2(\Omega)}
    \leq
    C \big (
    \|U^\rL\|_{\rL^2_\tau\rH^3}
    \|v^\rL\|_{\rL^\infty_\tau\rH^2}
   +
    \|U^\rL\|_{\rL^\infty_\tau\rH^2}
    \|v^\rL\|_{\rL^2_\tau\rH^3}
    \leq C\eps^2 \big ).
\end{aligned}
\]
The corresponding term with \(\delta(D\hat B)\) is controlled by the operator
estimate for \(\delta(D\hat B)\), namely
\[
\begin{aligned}
    &\quad
    \big \|
        \delta(D\hat B)[\nablaH U^\rL]\cdot v^\rL
    \big\|_{\rL^2_\tau\rH^1(\Omega)}
    \\
    &\leq
    C \big (
    \|\delta(D\hat B)[\nablaH U^\rL]\|_{\rL^2_\tau\rH^1(\Omega)}
    \|v^\rL\|_{\rL^\infty_\tau\rH^2(\Omega)}
    +
    \|\delta(D\hat B)[\nablaH U^\rL]\|_{\rL^\infty_\tau\rH^1(\Omega)}
    \|v^\rL\|_{\rL^2_\tau\rH^2(\Omega)} \big )
    \\
    &\leq C\eps^2 .
\end{aligned}
\]
Moreover,
\[
\begin{aligned}
    \big \|
        \delta\hat B\,\divH v^\rL
    \big \|_{\rL^2_\tau\rH^1(\Omega)}
    &\leq
    C \big (
    \|\delta\hat B\|_{\rL^\infty_\tau\rH^2}
    \|v^\rL\|_{\rL^2_\tau\rH^3}
    +
    \|\delta\hat B\|_{\rL^2_\tau\rH^3}
    \|v^\rL\|_{\rL^\infty_\tau\rH^2} \big )
    \\
    &\leq C\eps^2 .
\end{aligned}
\]
It remains to discuss the terms involving \(\dt U^\rL\) and the advective
quantity
For the first one, we use the spatial product estimate
\[
\begin{aligned}
    \big \|
        \bar\rho_{\rd}^\rL D\hat B^\ast[\dt U^\rL]
    \big \|_{\rL^2_\tau\rH^1(\Omega)}
    \leq
    C
    \|\bar\rho_{\rd}^\rL\|_{\rL^\infty_\tau\rH^2(\Omega)}
    \|\dt U^\rL\|_{\rL^2_\tau\rH^1(\Omega)}
    \leq C\eps^2 .
\end{aligned}
\]
Similarly,
\[
\begin{aligned}
    \big \|
        \bar\rho_{\rd}^\rL\delta(D\hat B)[\dt U^\rL]
    \big \|_{\rL^2_\tau\rH^1(\Omega)}
    +
    \big \|
        \bar\rho_{\rd}^\ast\delta(D\hat B)[\dt U^\rL]
    \big \|_{\rL^2_\tau\rH^1(\Omega)}
    \leq C\eps^2 .
\end{aligned}
\]
Finally, wit $A^\rL$ as above, we have
\[
\begin{aligned}
    \nabla A^\rL
    =
    \nabla\mathrm Z\,\widetilde v^\rL\cdot\nablaH U^\rL
    +
    \mathrm Z\nabla\widetilde v^\rL\cdot\nablaH U^\rL
    +
    \mathrm Z\widetilde v^\rL\cdot\nabla\nablaH U^\rL .
\end{aligned}
\]
Consequently,
\[
\begin{aligned}
   \|A^\rL\|_{\rL^2_\tau\rH^1(\Omega)}
   \leq
    C \big (
    \|\mathrm Z\|_{\rL^\infty_\tau\rH^2}
    \|\widetilde v^\rL\|_{\rL^\infty_\tau\rH^2}
    \|U^\rL\|_{\rL^2_\tau\rH^3}
    +
    \|\mathrm Z\|_{\rL^\infty_\tau\rH^2}
    \|\widetilde v^\rL\|_{\rL^2_\tau\rH^3}
    \|U^\rL\|_{\rL^\infty_\tau\rH^2} \big )
\leq C\eps^2 .
\end{aligned}
\]
It follows that
\[
\begin{aligned}
    \big \|
        (\bar\rho_{\rd}^\rL+\bar\rho_{\rd}^\ast)
        \bigl(D\hat B^\ast+\delta(D\hat B)\bigr)[A^\rL]
    \big \|_{\rL^2_\tau\rH^1(\Omega)}
    \leq C\eps^2 .
\end{aligned}
\]
Combining the preceding estimates and using the boundedness of vertical
integration on \(\rH^1(\Omega)\), we conclude that
\[
    \|J_2^{\mathrm m}\|_{\rL^2_\tau\rH^1(\Omega)}
    \leq C\eps^2 .
\]
Consequently,
\[
    \|J_2^{\mathrm m}\|_{\rL^2_\tau\rH^1(\Omega)}
    +
    \|J_2^{\mathrm m}\|_{\rH^1_\tau\rL^2(\Omega)}
    +
    \|\del_zJ_2^{\mathrm m}\|_{\rL^2_\tau\rH^1(\Omega)}
    +
    \|\del_zJ_2^{\mathrm m}\|_{\rH^1_\tau\rL^2(\Omega)}
    \leq C\eps^2 .
\]
Combining the estimates of $J_1^\rm$ and $J_2^\rm$, we obtain
\[
    \big \|
        \bigl((\bar\rho_{\rd}^\rL+\bar\rho_{\rd}^\ast)
        \hat B^\rL w^\rL\bigr)
    \big \|_{\rL^2_\tau\rH^1(\Omega)\cap\rH^1_\tau\rL^2(\Omega) \cap \rL^\infty_\tau \rL^3(\Omega)}
    +
    \big \|
        \del_z
        \bigl((\bar\rho_{\rd}^\rL+\bar\rho_{\rd}^\ast)
        \hat B^\rL w^\rL\bigr)
    \big \|_{\rL^2_\tau\rH^1(\Omega)\cap\rH^1_\tau\rL^2(\Omega) \cap \rL^\infty_\tau \rL^3(\Omega)}
    \leq C\eps ,
\]
using the embedding
\begin{equation*}
    \rL^2_\tau\rH^1(\Omega)\cap\rH^1_\tau\rL^2(\Omega) \hookrightarrow \rL^\infty_\tau \rL^3(\Omega).
\end{equation*}
We now estimate \(G_\rho\). 
Using the boundedness
of vertical integration and the algebra property of \(\rH^2\), we obtain
\[
\begin{aligned}
    &\quad
    \big \|
    \int_0^1
    \Big[
        \delta\hat B\,\divH v^\rL
        +
        \bigl(D\hat B^\ast+\delta(D\hat B)\bigr)
        [\nablaH U^\rL]\cdot v^\rL
    \Big](\cdot,\eta)\,\rd\eta
    \big \|_{\rL^2_\tau\rH^2(\T^2)}
    \\
    &\leq
    C
    \|\delta\hat B\|_{\rL^\infty_\tau\rH^2(\Omega)}
    \|\divH v^\rL\|_{\rL^2_\tau\rH^2(\Omega)}
    \\
    &\quad
    +
    C
    \Big(
        \|D\hat B^\ast[\nablaH U^\rL]\|_{\rL^2_\tau\rH^2(\Omega)}
        +
        \|\delta(D\hat B)\|_{\mathrm{op}}
        \|U^\rL\|_{\rL^2_\tau\rH^3(\Omega)}
    \Big)
    \|v^\rL\|_{\rL^\infty_\tau\rH^2(\Omega)}
    \\
    &\leq C\eps^2 .
\end{aligned}
\]
For the remaining part, we get
\[
\begin{aligned}
    &\quad
    \big \|
        \bar\rho_{\rd}^\rL \divH b^\rL
        +
        (\bar\rho_{\rd}^\rL+\bar\rho_{\rd}^\ast)
        \nablaH b^\rL:(\mathrm Z^\top-\mathrm I_2)
    \big \|_{\rL^2_\tau\rH^2(\T^2)}
    \\
    &\leq
    C
    \|\bar\rho_{\rd}^\rL\|_{\rL^\infty_\tau\rH^2(\T^2)}
    \|\divH b^\rL\|_{\rL^2_\tau\rH^2(\T^2)}
    +
    C
    \|\bar\rho_{\rd}^\rL+\bar\rho_{\rd}^\ast\|_{\rL^\infty_\tau\rH^2(\T^2)}
    \|\nablaH b^\rL\|_{\rL^2_\tau\rH^2(\T^2)}
    \|\mathrm Z^\top-\mathrm I_2\|_{\rL^\infty_\tau\rH^2(\T^2)}
    \\
    &\leq C\eps^2 .
\end{aligned}
\]
We next estimate the \(\rH^1_\tau\rL^2\)-part. By the product rule, the
boundedness of vertical integration and the estimates for
\(\delta\hat B\) and \(\delta(D\hat B)\), we obtain
\[
\begin{aligned}
    &\quad
    \big \|
    \dt
    \int_0^1
    \Big[
        \delta\hat B\,\divH v^\rL
        +
        \bigl(D\hat B^\ast+\delta(D\hat B)\bigr)
        [\nablaH U^\rL]\cdot v^\rL
    \Big](\cdot,\eta)\,\rd\eta
    \big \|_{\rL^2_\tau\rL^2(\T^2)}
    \\
    &\leq
    C
    \|\dt\delta\hat B\|_{\rL^2_\tau\rH^2(\Omega)}
    \|v^\rL\|_{\rL^\infty_\tau\rH^2(\Omega)}
    +
    C
    \|\delta\hat B\|_{\rL^\infty_\tau\rH^2(\Omega)}
    \|\dt v^\rL\|_{\rL^2_\tau\rH^2(\Omega)}
    \\
    &\quad
    +
    C
    \|\dt U^\rL\|_{\rL^2_\tau\rH^2(\Omega)}
    \|v^\rL\|_{\rL^\infty_\tau\rH^2(\Omega)}
    +
    C
    \|U^\rL\|_{\rL^\infty_\tau\rH^2(\Omega)}
    \|\dt v^\rL\|_{\rL^2_\tau\rH^2(\Omega)}
    \\
    &\quad
    +
    C
    \big \|\delta(D\hat B)[\nablaH U^\rL]\big \|_{\rH^1_\tau\rL^2(\Omega)}
    \|v^\rL\|_{\rL^\infty_\tau\rH^2(\Omega)}
    +
    C
    \big \|\delta(D\hat B)[\nablaH U^\rL]\big \|_{\rL^\infty_\tau\rH^1(\Omega)}
    \|\dt v^\rL\|_{\rL^2_\tau\rH^1(\Omega)}
    \\
    &\leq C\eps^2 .
\end{aligned}
\]
For the remaining part, again using the product, we get
\[
\begin{aligned}
    &\quad
    \big \|
        \dt
        \Big[
            \bar\rho_{\rd}^\rL \divH b^\rL
            +
            (\bar\rho_{\rd}^\rL+\bar\rho_{\rd}^\ast)
            \nablaH b^\rL:(\mathrm Z^\top-\mathrm I_2)
        \Big]
    \big \|_{\rL^2_\tau\rL^2(\T^2)}
    \\
    &\leq
    C
    \|\dt\bar\rho_{\rd}^\rL\|_{\rL^2_\tau\rH^2(\T^2)}
    \|b^\rL\|_{\rL^\infty_\tau\rH^2(\T^2)}
    +
    C
    \|\bar\rho_{\rd}^\rL\|_{\rL^\infty_\tau\rH^2(\T^2)}
    \|\dt b^\rL\|_{\rL^2_\tau\rH^2(\T^2)}
    \\
    &\quad
    +
    C
    \|\dt\bar\rho_{\rd}^\rL\|_{\rL^2_\tau\rH^2(\T^2)}
    \|\nablaH b^\rL\|_{\rL^\infty_\tau\rH^1(\T^2)}
    \|\mathrm Z^\top-\mathrm I_2\|_{\rL^\infty_\tau\rH^2(\T^2)}
    \\
    &\quad
    +
    C
    \|\bar\rho_{\rd}^\rL+\bar\rho_{\rd}^\ast\|_{\rL^\infty_\tau\rH^2(\T^2)}
    \|\dt b^\rL\|_{\rL^2_\tau\rH^2(\T^2)}
    \|\mathrm Z^\top-\mathrm I_2\|_{\rL^\infty_\tau\rH^2(\T^2)}
    \\
    &\quad
    +
    C
    \|\bar\rho_{\rd}^\rL+\bar\rho_{\rd}^\ast\|_{\rL^\infty_\tau\rH^2(\T^2)}
    \|\nablaH b^\rL\|_{\rL^\infty_\tau\rH^1(\T^2)}
    \|\dt\mathrm Z\|_{\rL^2_\tau\rH^2(\T^2)}
    \\
    &\leq C\eps^2 .
\end{aligned}
\]
Combining this with the previous \(\rL^2_\tau\rH^2\)-estimate yields
\[
    \|G_\rho\|_{\rL^2_\tau\rH^2(\T^2)\cap\rH^1_\tau\rL^2(\T^2)}
    \leq C\eps^2 .
\]
Next, we estimate the nonlinear term $G_v$. We start with the contribution containing \(\dt v^\rL\). We only treat one
representative term, namely
\[
    \frac{\bar\rho_{\rd}^\rL}{\bar\rho_{\rd}^\ast}\dt v^\rL .
\]
Using the product rule,
\[
    \nabla\bigl(\bar\rho_{\rd}^\rL\dt v^\rL\bigr)
    =
    \nablaH\bar\rho_{\rd}^\rL\,\dt v^\rL
    +
    \bar\rho_{\rd}^\rL\nabla\dt v^\rL .
\]
Therefore,
\[
\begin{aligned}
    \big \|
        \nabla
        \bigg(
            \frac{\bar\rho_{\rd}^\rL}{\bar\rho_{\rd}^\ast}
            \dt v^\rL
        \bigg)
    \big \|_{\rL^2_\tau\rL^2(\Omega)}
    &\leq
    C
    \|\nablaH\bar\rho_{\rd}^\rL\|_{\rL^\infty_\tau\rL^6(\T^2)}
    \|\dt v^\rL\|_{\rL^2_\tau\rL^3(\Omega)}
    +
    C
    \|\bar\rho_{\rd}^\rL\|_{\rL^\infty_\tau\rL^\infty(\T^2)}
    \|\nabla\dt v^\rL\|_{\rL^2_\tau\rL^2(\Omega)}
    \\
    &\leq
    C
    \|\bar\rho_{\rd}^\rL\|_{\rL^\infty_\tau\rH^2(\T^2)}
    \|\dt v^\rL\|_{\rL^2_\tau\rH^1(\Omega)}
    \\
    &\leq C\eps^2 .
\end{aligned}
\]
Together with the lower-order estimate
\[
\begin{aligned}
    \big \|
        \frac{\bar\rho_{\rd}^\rL}{\bar\rho_{\rd}^\ast}
        \dt v^\rL
    \big \|_{\rL^2_\tau\rL^2(\Omega)}
    \leq
    C
    \|\bar\rho_{\rd}^\rL\|_{\rL^\infty_\tau\rL^\infty(\T^2)}
    \|\dt v^\rL\|_{\rL^2_\tau\rL^2(\Omega)}
    \leq C\eps^2 ,
\end{aligned}
\]
we obtain
\[
    \big \|
        \frac{\bar\rho_{\rd}^\rL}{\bar\rho_{\rd}^\ast}
        \dt v^\rL
    \big \|_{\rL^2_\tau\rH^1(\Omega)}
    \leq C\eps^2 .
\]
For the horizontal transport term, using the product rule, we have
\[
\begin{aligned}
    &\quad
    \big \|
        \nabla
        \big[
            \frac{
                (\bar\rho_{\rd}^\rL+\bar\rho_{\rd}^\ast)\hat B^\rL(1+Q_{\rm}^\rL)
            }{
                \bar\rho_{\rd}^\ast\hat B^\ast
            }
            \mathrm Z\tilde v^\rL\cdot\nablaH v^\rL
        \big]
    \big \|_{\rL^2_\tau\rL^2(\Omega)}
    \\
    &\leq
    C
    \big \|
        \nabla
        \big[
            \frac{
                (\bar\rho_{\rd}^\rL+\bar\rho_{\rd}^\ast)\hat B^\rL(1+Q_{\rm}^\rL)
            }{
                \bar\rho_{\rd}^\ast\hat B^\ast
            }
        \big]
    \big \|_{\rL^\infty_\tau\rH^1(\Omega)}
    \|\mathrm Z\|_{\rL^\infty_\tau\rH^2(\T^2)}
    \|\tilde v^\rL\|_{\rL^\infty_\tau\rH^2(\Omega)}
    \|\nablaH v^\rL\|_{\rL^2_\tau\rH^1(\Omega)}
    \\
    &\quad
    +
    C
    \big \|
        \frac{
            (\bar\rho_{\rd}^\rL+\bar\rho_{\rd}^\ast)\hat B^\rL(1+Q_{\rm}^\rL)
        }{
            \bar\rho_{\rd}^\ast\hat B^\ast
        }
    \big \|_{\rL^\infty_\tau\rH^2(\Omega)}
    \|\nabla\mathrm Z\|_{\rL^\infty_\tau\rH^1(\T^2)}
    \|\tilde v^\rL\|_{\rL^\infty_\tau\rH^2(\Omega)}
    \|\nablaH v^\rL\|_{\rL^2_\tau\rH^1(\Omega)}
    \\
    &\quad
    +
    C
    \big \|
        \frac{
            (\bar\rho_{\rd}^\rL+\bar\rho_{\rd}^\ast)\hat B^\rL(1+Q_{\rm}^\rL)
        }{
            \bar\rho_{\rd}^\ast\hat B^\ast
        }
    \big \|_{\rL^\infty_\tau\rH^2(\Omega)}
    \|\mathrm Z\|_{\rL^\infty_\tau\rH^2(\T^2)}
    \|\nabla\tilde v^\rL\|_{\rL^\infty_\tau\rH^1(\Omega)}
    \|\nablaH v^\rL\|_{\rL^2_\tau\rH^1(\Omega)}
    \\
    &\quad
    +
    C
    \big \|
        \frac{
            (\bar\rho_{\rd}^\rL+\bar\rho_{\rd}^\ast)\hat B^\rL(1+Q_{\rm}^\rL)
        }{
            \bar\rho_{\rd}^\ast\hat B^\ast
        }
    \big \|_{\rL^\infty_\tau\rH^2(\Omega)}
    \|\mathrm Z\|_{\rL^\infty_\tau\rH^2(\T^2)}
    \|\tilde v^\rL\|_{\rL^\infty_\tau\rH^2(\Omega)}
    \|\nabla\nablaH v^\rL\|_{\rL^2_\tau\rL^2(\Omega)}
    \\
    &\leq C\eps^2 .
\end{aligned}
\]
Together with the lower-order estimate
\[
\begin{aligned}
    &\quad
    \big \|
        \frac{
            (\bar\rho_{\rd}^\rL+\bar\rho_{\rd}^\ast)\hat B^\rL(1+Q_{\rm}^\rL)
        }{
            \bar\rho_{\rd}^\ast\hat B^\ast
        }
        \mathrm Z\tilde v^\rL\cdot\nablaH v^\rL
    \big \|_{\rL^2_\tau\rL^2(\Omega)}
    \\
    &\leq
    C
    \big \|
        \frac{
            (\bar\rho_{\rd}^\rL+\bar\rho_{\rd}^\ast)\hat B^\rL(1+Q_{\rm}^\rL)
        }{
            \bar\rho_{\rd}^\ast\hat B^\ast
        }
    \big \|_{\rL^\infty_\tau\rH^2(\Omega)}
    \|\mathrm Z\|_{\rL^\infty_\tau\rH^2(\T^2)}
    \|\tilde v^\rL\|_{\rL^\infty_\tau\rH^2(\Omega)}
    \|\nablaH v^\rL\|_{\rL^2_\tau\rL^2(\Omega)}
    \leq C\eps^2 ,
\end{aligned}
\]
we obtain
\[
\begin{aligned}
    &\quad
    \big \|
        \frac{
            (\bar\rho_{\rd}^\rL+\bar\rho_{\rd}^\ast)\hat B^\rL(1+Q_{\rm}^\rL)
        }{
            \bar\rho_{\rd}^\ast\hat B^\ast
        }
        \mathrm Z\tilde v^\rL\cdot\nablaH v^\rL
    \big \|_{\rL^2_\tau\rH^1(\Omega)}
    \leq C\eps^2 .
\end{aligned}
\]
For the vertical transport term, we have estimating anisotropically
\[
\begin{aligned}
    \big \|
        \nabla
        \big[
            \frac{
                (\bar\rho_{\rd}\hat B w)^\rL(1+Q_{\rm}^\rL)
            }{
                \bar\rho_{\rd}^\ast\hat B^\ast
            }
            \del_z v^\rL
        \big]
    \big \|_{\rL^2_\tau\rL^2(\Omega)}
    &\leq
    C \big (
    \big \|
        \nabla
        \big[
            \frac{
                1+Q_{\rm}^\rL
            }{
                \bar\rho_{\rd}^\ast\hat B^\ast
            }
        \big]
    \big \|_{\rL^\infty_\tau\rH^1(\Omega)}
    \|(\bar\rho_{\rd}\hat B w)^\rL\|_{\rL^2_\tau\rH^1(\Omega)}
    \|\del_z v^\rL\|_{\rL^\infty_\tau\rH^1(\Omega)}
    \\
    &\quad
    +
    \big \|
        \frac{
            1+Q_{\rm}^\rL
        }{
            \bar\rho_{\rd}^\ast\hat B^\ast
        }
    \big \|_{\rL^\infty_\tau\rH^2(\Omega)}
    \|\nabla(\bar\rho_{\rd}\hat B w)^\rL\|_{\rL^2_\tau\rL^2_\rH \rL^\infty_z}
    \|\del_z v^\rL\|_{\rL^\infty_\tau \rL^\infty_\rH \rL^2_z}
    \\
    &\quad
    +
    \big \|
        \frac{
            1+Q_{\rm}^\rL
        }{
            \bar\rho_{\rd}^\ast\hat B^\ast
        }
    \big \|_{\rL^\infty_\tau\rH^2(\Omega)}
    \|(\bar\rho_{\rd}\hat B w)^\rL\|_{\rL^2_\tau\rH^1(\Omega)}
    \|\nabla\del_z v^\rL\|_{\rL^\infty_\tau\rL^2(\Omega)} \big )
    \\
    &\leq C\eps^2,
\end{aligned}
\]
where we used the embedding
\begin{equation*}
    \rH^2_\tau \rL^2 \cap \rL^2_\tau \rH^3 \hookrightarrow \rL^\infty_\tau \rH^{\frac{9}{4}} \hookrightarrow \rL^\infty_\tau \rL^\infty_\rH \rH^1_z
\end{equation*}
as well as 
\begin{equation*}
     \|\nabla(\bar\rho_{\rd}\hat B w)^\rL\|_{\rL^2_\tau\rL^2_\rH \rL^\infty_z} \leq C \| \dz (\bar\rho_{\rd}\hat B w)^\rL\|_{\rL^2_\tau \rH^1(\Omega)} \leq C\eps.
\end{equation*}
The lower-order term is estimated in the same way. Indeed,
\[
\begin{aligned}
    \big \|
        \frac{
            (\bar\rho_{\rd}\hat B w)^\rL(1+Q_{\rm}^\rL)
        }{
            \bar\rho_{\rd}^\ast\hat B^\ast
        }
        \del_z v^\rL
    \big \|_{\rL^2_\tau\rL^2(\Omega)}
    \leq
    C
    \big \|
        \frac{
            1+Q_{\rm}^\rL
        }{
            \bar\rho_{\rd}^\ast\hat B^\ast
        }
    \big \|_{\rL^\infty_\tau\rH^2(\Omega)}
    \|(\bar\rho_{\rd}\hat B w)^\rL\|_{\rL^2_\tau\rH^1(\Omega)}
    \|\del_z v^\rL\|_{\rL^\infty_\tau\rH^1(\Omega)}
\leq C\eps^2 .
\end{aligned}
\]
Thus,
\[
\begin{aligned}
    &\quad
    \big \|
        \frac{
            (\bar\rho_{\rd}\hat B w)^\rL(1+Q_{\rm}^\rL)
        }{
            \bar\rho_{\rd}^\ast\hat B^\ast
        }
        \del_z v^\rL
    \big \|_{\rL^2_\tau\rH^1(\Omega)}
    \leq C\eps^2 .
\end{aligned}
\]
We now estimate the transformed viscous terms. 
Using the product rule, we obtain
\[
\begin{aligned}
    \nabla
    \bigl((\cL_1-\Delta)v^\rL\bigr)_i
    &=
    \sum_{j,k,l}
    \nabla
    \frac{\partial^2 v_i^\rL}{\partial y_k\partial y_l}
    \bigl(\mathrm Z_{k,j}-\delta_{k,j}\bigr)\mathrm Z_{l,j}
    +
    \sum_{j,k,l}
    \frac{\partial^2 v_i^\rL}{\partial y_k\partial y_l}
    \nabla\mathrm Z_{k,j}\,\mathrm Z_{l,j}
    +
    \sum_{j,k,l}
    \frac{\partial^2 v_i^\rL}{\partial y_k\partial y_l}
    \bigl(\mathrm Z_{k,j}-\delta_{k,j}\bigr)
    \nabla\mathrm Z_{l,j}
    \\
    &\quad
    +
    \sum_{k,l}
    \nabla
    \frac{\partial^2 v_i^\rL}{\partial y_k\partial y_l}
    \bigl(\mathrm Z_{l,k}-\delta_{l,k}\bigr)
    +
    \sum_{k,l}
    \frac{\partial^2 v_i^\rL}{\partial y_k\partial y_l}
    \nabla\mathrm Z_{l,k}
    +
    \sum_{j,k,l}
    \nabla\mathrm Z_{l,j}
    \frac{\partial v_i^\rL}{\partial y_k}
    \frac{\partial \mathrm Z_{k,j}}{\partial y_l}
  \\
    &\quad   +
    \sum_{j,k,l}
    \mathrm Z_{l,j}
    \nabla
    \frac{\partial v_i^\rL}{\partial y_k}
    \frac{\partial \mathrm Z_{k,j}}{\partial y_l}
    +
    \sum_{j,k,l}
    \mathrm Z_{l,j}
    \frac{\partial v_i^\rL}{\partial y_k}
    \nabla
    \frac{\partial \mathrm Z_{k,j}}{\partial y_l}.
\end{aligned}
\]
Therefore,
\[
\begin{aligned}
    &\quad
    \big \|
        \nabla
        \bigl((\cL_1-\Delta)v^\rL\bigr)_i
    \big \|_{\rL^2_\tau\rL^2(\Omega)}
    \\
    &\leq C \big(
    \|v^\rL\|_{\rL^2_\tau\rH^3(\Omega)}
    \|\mathrm Z-\mathrm I_2\|_{\rL^\infty_\tau\rH^2(\T^2)}
    \|\mathrm Z\|_{\rL^\infty_\tau\rH^2(\T^2)}
    +
    \|v^\rL\|_{\rL^2_\tau\rH^3(\Omega)}
    \|\nablaH\mathrm Z\|_{\rL^\infty_\tau\rH^1(\T^2)}
    \|\mathrm Z\|_{\rL^\infty_\tau\rH^2(\T^2)}
    \\
    &\quad
    +
    \|v^\rL\|_{\rL^2_\tau\rH^3(\Omega)}
    \|\mathrm Z-\mathrm I_2\|_{\rL^\infty_\tau\rH^2(\T^2)}
    \|\nablaH\mathrm Z\|_{\rL^\infty_\tau\rH^1(\T^2)}
    +
    \|v^\rL\|_{\rL^2_\tau\rH^3(\Omega)}
    \|\mathrm Z-\mathrm I_2\|_{\rL^\infty_\tau\rH^2(\T^2)}
    \\
    &\quad
    +
    \|v^\rL\|_{\rL^2_\tau\rH^3(\Omega)}
    \|\nablaH\mathrm Z\|_{\rL^\infty_\tau\rH^1(\T^2)}
    +
    \|\mathrm Z\|_{\rL^\infty_\tau\rH^2(\T^2)}
    \|v^\rL\|_{\rL^2_\tau\rH^3(\Omega)}
    \|\nablaH\mathrm Z\|_{\rL^\infty_\tau\rH^1(\T^2)} \big )
    \\
    &\leq C\eps^2 .
\end{aligned}
\]
The lower-order \(\rL^2_\tau\rL^2\)-part is estimated in the same way. Hence
\[
\begin{aligned}
    \big \|
        \frac{\mu}{\bar\rho_{\rd}^\ast\hat B^\ast}
        (\cL_1-\Delta)v^\rL
    \big \|_{\rL^2_\tau\rH^1(\Omega)}
    \leq C\eps^2 .
\end{aligned}
\]
We only write the estimate for the \(\cL_1-\Delta\) term. The term
\(\cL_2-\nablaH\divH\) has the same structure and is estimated in the same
way. The transformed viscous operators occurring in the temperature and
moisture equations are treated identically, and we omit them here.
We now consider the pressure contribution. We only estimate one representative
term from the nonlinear pressure remainder, namely
\[
    \frac{1}{\bar\rho_{\rd}^\ast\hat B^\ast}
    \nablaH
    \big(
        \bar\rho_{\rd}^\rL \hat B^\ast T^\rL
    \big).
\]
The other terms in the pressure remainder are estimated in the same way.
Using the product rule and since \(\hat B^\ast\) is a fixed smooth multiplier, we have 
\[
\begin{aligned}
    &\quad
    \big \|
        \nabla
        \big[
            \frac{1}{\bar\rho_{\rd}^\ast\hat B^\ast}
            \nablaH
            \big(
                \bar\rho_{\rd}^\rL \hat B^\ast T^\rL
            \big)
        \big]
    \big \|_{\rL^2_\tau\rL^2(\Omega)}
    \\
    &\leq
    C\big ( 
    \|\nabla\nablaH\bar\rho_{\rd}^\rL\|_{\rL^\infty_\tau\rL^2(\T^2)}
    \|T^\rL\|_{\rL^2_\tau\rL^\infty(\Omega)}
    +
    \|\nablaH\bar\rho_{\rd}^\rL\|_{\rL^\infty_\tau\rL^6(\T^2)}
    \|T^\rL\|_{\rL^2_\tau\rL^3(\Omega)}
    +
    \|\nablaH\bar\rho_{\rd}^\rL\|_{\rL^\infty_\tau\rL^6(\T^2)}
    \|\nabla T^\rL\|_{\rL^2_\tau\rL^3(\Omega)}
    \\
    &\quad
    +
    \|\nablaH\bar\rho_{\rd}^\rL\|_{\rL^\infty_\tau\rL^6(\T^2)}
    \|\nablaH T^\rL\|_{\rL^2_\tau\rL^3(\Omega)}
    +
    \|\bar\rho_{\rd}^\rL\|_{\rL^\infty_\tau\rL^\infty(\T^2)}
    \|T^\rL\|_{\rL^2_\tau\rH^2(\Omega)}
    +
    \|\bar\rho_{\rd}^\rL\|_{\rL^\infty_\tau\rH^2(\T^2)}
    \|T^\rL\|_{\rL^2_\tau\rH^2(\Omega)} \big )
    \\
    &\leq C\eps^2 .
\end{aligned}
\]
The lower-order \(\rL^2_\tau\rL^2\)-part is estimated in the same way. Therefore,
\[
\begin{aligned}
    \big \|
        \frac{1}{\bar\rho_{\rd}^\ast\hat B^\ast}
        \nablaH
        \big(
            \bar\rho_{\rd}^\rL \hat B^\ast T^\rL
        \big)
    \big \|_{\rL^2_\tau\rH^1(\Omega)}
    \leq C\eps^2 .
\end{aligned}
\]
All other quadratic pressure terms, as well as the geometric pressure term
containing \(\mathrm Z^\top-\mathrm I_2\), are treated by the same product-rule
estimate.
We now estimate the \(\rH^1_\tau\rL^2\)-part. We again start with the
contribution containing \(\dt v^\rL\). As before, we only treat one
representative term, namely
\[
    \frac{\bar\rho_{\rd}^\rL}{\bar\rho_{\rd}^\ast}\dt v^\rL .
\]
Using the product rule,
\[
    \dt
    \bigg(
        \frac{\bar\rho_{\rd}^\rL}{\bar\rho_{\rd}^\ast}
        \dt v^\rL
    \bigg)
    =
    \frac{\dt\bar\rho_{\rd}^\rL}{\bar\rho_{\rd}^\ast}\dt v^\rL
    +
    \frac{\bar\rho_{\rd}^\rL}{\bar\rho_{\rd}^\ast}\dt^2 v^\rL .
\]
Therefore,
\[
\begin{aligned}
    \big \|
        \dt
        \big[
            \frac{\bar\rho_{\rd}^\rL}{\bar\rho_{\rd}^\ast}
            \dt v^\rL
        \big]
    \big \|_{\rL^2_\tau\rL^2(\Omega)}
    &\leq
    C \big (
    \|\dt\bar\rho_{\rd}^\rL\|_{\rL^2_\tau\rL^6(\T^2)}
    \|\dt v^\rL\|_{\rL^\infty_\tau\rL^3(\Omega)}
    +
    \|\bar\rho_{\rd}^\rL\|_{\rL^\infty_\tau\rL^\infty(\T^2)}
    \|\dt^2 v^\rL\|_{\rL^2_\tau\rL^2(\Omega)} \big )
    \\
    &\leq
    C \big (
    \|\dt\bar\rho_{\rd}^\rL\|_{\rL^2_\tau\rH^1(\T^2)}
    \|\dt v^\rL\|_{\rL^\infty_\tau\rH^1(\Omega)}
    +
    \|\bar\rho_{\rd}^\rL\|_{\rL^\infty_\tau\rH^2(\T^2)}
    \|\dt^2 v^\rL\|_{\rL^2_\tau\rL^2(\Omega)} \big )
    \\
    &\leq C\eps^2 .
\end{aligned}
\]
All remaining terms in the coefficient of \(\dt v^\rL\) are treated in the
same way. 
For the horizontal transport term, using the product rule, we have

\[
\begin{aligned}
    &\quad
    \big \|
        \dt
        \bigg[
            \frac{
                (\bar\rho_{\rd}^\rL+\bar\rho_{\rd}^\ast)\hat B^\rL(1+Q_{\rm}^\rL)
            }{
                \bar\rho_{\rd}^\ast\hat B^\ast
            }
            \mathrm Z\tilde v^\rL\cdot\nablaH v^\rL
        \bigg]
    \big \|_{\rL^2_\tau\rL^2(\Omega)}
    \\
    &\leq
    C \big (
    \big \|
        \dt
        \bigg[
            \frac{
                (\bar\rho_{\rd}^\rL+\bar\rho_{\rd}^\ast)\hat B^\rL(1+Q_{\rm}^\rL)
            }{
                \bar\rho_{\rd}^\ast\hat B^\ast
            }
        \bigg]
    \big \|_{\rL^2_\tau\rH^1(\Omega)}
    \|\mathrm Z\|_{\rL^\infty_\tau\rH^2(\T^2)}
    \|\tilde v^\rL\|_{\rL^\infty_\tau\rH^2(\Omega)}
    \|\nablaH v^\rL\|_{\rL^\infty_\tau\rH^1(\Omega)}
    \\
    &\quad
    +
    \big \|
        \frac{
            (\bar\rho_{\rd}^\rL+\bar\rho_{\rd}^\ast)\hat B^\rL(1+Q_{\rm}^\rL)
        }{
            \bar\rho_{\rd}^\ast\hat B^\ast
        }
    \big \|_{\rL^\infty_\tau\rH^2(\Omega)}
    \|\dt\mathrm Z\|_{\rL^2_\tau\rH^1(\T^2)}
    \|\tilde v^\rL\|_{\rL^\infty_\tau\rH^2(\Omega)}
    \|\nablaH v^\rL\|_{\rL^\infty_\tau\rH^1(\Omega)}
    \\
    &\quad
    +
    \big \|
        \frac{
            (\bar\rho_{\rd}^\rL+\bar\rho_{\rd}^\ast)\hat B^\rL(1+Q_{\rm}^\rL)
        }{
            \bar\rho_{\rd}^\ast\hat B^\ast
        }
    \big \|_{\rL^\infty_\tau\rH^2(\Omega)}
    \|\mathrm Z\|_{\rL^\infty_\tau\rH^2(\T^2)}
    \|\dt\tilde v^\rL\|_{\rL^2_\tau\rH^1(\Omega)}
    \|\nablaH v^\rL\|_{\rL^\infty_\tau\rH^1(\Omega)}
    \\
    &\quad
    +
    \big \|
        \frac{
            (\bar\rho_{\rd}^\rL+\bar\rho_{\rd}^\ast)\hat B^\rL(1+Q_{\rm}^\rL)
        }{
            \bar\rho_{\rd}^\ast\hat B^\ast
        }
    \big \|_{\rL^\infty_\tau\rH^2(\Omega)}
    \|\mathrm Z\|_{\rL^\infty_\tau\rH^2(\T^2)}
    \|\tilde v^\rL\|_{\rL^\infty_\tau\rH^2(\Omega)}
    \|\nablaH\dt v^\rL\|_{\rL^2_\tau\rH^1(\Omega)} \big )
    \\
    &\leq C\eps^2 .
\end{aligned}
\]
Together with the previous \(\rL^2_\tau\rL^2\)-estimate, we obtain
\[
\begin{aligned}
    &\quad
    \big \|
        \frac{
            (\bar\rho_{\rd}^\rL+\bar\rho_{\rd}^\ast)\hat B^\rL(1+Q_{\rm}^\rL)
        }{
            \bar\rho_{\rd}^\ast\hat B^\ast
        }
        \mathrm Z\tilde v^\rL\cdot\nablaH v^\rL
    \big \|_{\rH^1_\tau\rL^2(\Omega)}
    \leq C\eps^2 .
\end{aligned}
\]
For the vertical transport term, we have
\[
\begin{aligned}
    \dt
    \bigg[
        \frac{
            (\bar\rho_{\rd}\hat B w)^\rL(1+Q_{\rm}^\rL)
        }{
            \bar\rho_{\rd}^\ast\hat B^\ast
        }
        \del_z v^\rL
    \bigg]
&=
    \dt
    \bigg[
        \frac{
            1+Q_{\rm}^\rL
        }{
            \bar\rho_{\rd}^\ast\hat B^\ast
        }
    \bigg]
    (\bar\rho_{\rd}\hat B w)^\rL
    \del_z v^\rL
    +
    \frac{
        1+Q_{\rm}^\rL
    }{
        \bar\rho_{\rd}^\ast\hat B^\ast
    }
    \dt(\bar\rho_{\rd}\hat B w)^\rL
    \del_z v^\rL
\\ &\quad    +
    \frac{
        1+Q_{\rm}^\rL
    }{
        \bar\rho_{\rd}^\ast\hat B^\ast
    }
    (\bar\rho_{\rd}\hat B w)^\rL
    \del_z\dt v^\rL .
\end{aligned}
\]
Therefore, estimating anisotropically, we obtain 
\[
\begin{aligned}
    \big \|
        \dt
        \bigg[
            \frac{
                (\bar\rho_{\rd}\hat B w)^\rL(1+Q_{\rm}^\rL)
            }{
                \bar\rho_{\rd}^\ast\hat B^\ast
            }
            \del_z v^\rL
        \bigg]
    \big \|_{\rL^2_\tau\rL^2(\Omega)}
    &\leq
    C \big (
    \big \|
        \dt
        \bigg[
            \frac{
                1+Q_{\rm}^\rL
            }{
                \bar\rho_{\rd}^\ast\hat B^\ast
            }
        \bigg]
    \big \|_{\rL^2_\tau\rH^2(\Omega)}
    \|(\bar\rho_{\rd}\hat B w)^\rL\|_{\rL^\infty_\tau\rL^3(\Omega)}
    \|\del_z v^\rL\|_{\rL^\infty_\tau\rH^1(\Omega)}
    \\
    &\quad
    +
    \big \|
        \frac{
            1+Q_{\rm}^\rL
        }{
            \bar\rho_{\rd}^\ast\hat B^\ast
        }
    \big \|_{\rL^\infty_\tau\rH^2(\Omega)}
    \|\dt(\bar\rho_{\rd}\hat B w)^\rL\|_{\rL^2_\tau \rL^2_\rH \rL^\infty_z}
    \|\del_z v^\rL\|_{\rL^\infty_\tau \rL^\infty_\rH \rL^2_z}
    \\
    &\quad
    +
    \big \|
        \frac{
            1+Q_{\rm}^\rL
        }{
            \bar\rho_{\rd}^\ast\hat B^\ast
        }
    \big \|_{\rL^\infty_\tau\rH^2(\Omega)}
    \|(\bar\rho_{\rd}\hat B w)^\rL\|_{\rL^\infty_\tau\rL^3(\Omega)}
    \|\del_z\dt v^\rL\|_{\rL^2_\tau\rH^1(\Omega)} \big )
    \\
    &\leq C\eps^2,
\end{aligned}
\]
where we used the embedding
\begin{equation*}
    \rH^2_\tau \rL^2 (\Omega)\cap \rL^2_\tau \rH^3 \hookrightarrow \rL^\infty_\tau \rH^{\frac{9}{4}}(\Omega) \hookrightarrow \rL^\infty_\tau \rL^\infty_\rH \rH^1_z
\end{equation*}
as well as 

\begin{equation*}
     \|\dt(\bar\rho_{\rd}\hat B w)^\rL\|_{\rL^2_\tau\rL^2_\rH \rL^\infty_z} \leq C \| \dz (\bar\rho_{\rd}\hat B w)^\rL\|_{\rH^1_\tau \rL^2(\Omega)} \leq C\eps.
\end{equation*}
Together with the previous \(\rL^2_\tau\rL^2\)-estimate, this gives
\[
\begin{aligned}
    &\quad
    \big \|
        \frac{
            (\bar\rho_{\rd}\hat B w)^\rL(1+Q_{\rm}^\rL)
        }{
            \bar\rho_{\rd}^\ast\hat B^\ast
        }
        \del_z v^\rL
    \big \|_{\rH^1_\tau\rL^2(\Omega)}
    \leq C\eps^2 .
\end{aligned}
\]
Consequently,
\[
\begin{aligned}
    &\quad
    \big \|
        \frac{
            (\bar\rho_{\rd}^\rL+\bar\rho_{\rd}^\ast)\hat B^\rL(1+Q_{\rm}^\rL)
        }{
            \bar\rho_{\rd}^\ast\hat B^\ast
        }
        \mathrm Z\tilde v^\rL\cdot\nablaH v^\rL
        +
        \frac{
            (\bar\rho_{\rd}\hat B w)^\rL(1+Q_{\rm}^\rL)
        }{
            \bar\rho_{\rd}^\ast\hat B^\ast
        }
        \del_z v^\rL
    \big \|_{\rH^1_\tau\rL^2(\Omega)}
    \leq C\eps^2 .
\end{aligned}
\]
We next estimate the transformed viscous terms. As before, we only write the
estimate for \(\cL_1-\Delta\). The term \(\cL_2-\nablaH\divH\) has the same
structure and is estimated in the same way. The transformed viscous operators
in the temperature and moisture equations are treated identically.

Using the product rule, we obtain
\[
\begin{aligned}
    &\quad
    \big \|
        \dt
        \bigl((\cL_1-\Delta)v^\rL\bigr)_i
    \big \|_{\rL^2_\tau\rL^2(\Omega)}
    \\
    &\leq
    C\big (
    \|\dt v^\rL\|_{\rL^2_\tau\rH^2(\Omega)}
    \|\mathrm Z-\mathrm I_2\|_{\rL^\infty_\tau\rH^2(\T^2)}
    \|\mathrm Z\|_{\rL^\infty_\tau\rH^2(\T^2)}
    +
    \|v^\rL\|_{\rL^\infty_\tau\rH^2(\Omega)}
    \|\dt\mathrm Z\|_{\rL^2_\tau\rH^2(\T^2)}
    \|\mathrm Z\|_{\rL^\infty_\tau\rH^2(\T^2)}
    \\
    &\quad
    +
    \|v^\rL\|_{\rL^\infty_\tau\rH^2(\Omega)}
    \|\mathrm Z-\mathrm I_2\|_{\rL^\infty_\tau\rH^2(\T^2)}
    \|\dt\mathrm Z\|_{\rL^2_\tau\rH^2(\T^2)}
    +
    \|\dt v^\rL\|_{\rL^2_\tau\rH^2(\Omega)}
    \|\mathrm Z-\mathrm I_2\|_{\rL^\infty_\tau\rH^2(\T^2)}
    \\
    &\quad +
    \|v^\rL\|_{\rL^\infty_\tau\rH^2(\Omega)}
    \|\dt\mathrm Z\|_{\rL^2_\tau\rH^2(\T^2)}
    +
    \|\dt\mathrm Z\|_{\rL^2_\tau\rH^2(\T^2)}
    \|v^\rL\|_{\rL^\infty_\tau\rH^2(\Omega)}
    \|\nablaH\mathrm Z\|_{\rL^\infty_\tau\rH^1(\T^2)}
    \\
    &\quad
    +
    \|\mathrm Z\|_{\rL^\infty_\tau\rH^2(\T^2)}
    \|\dt v^\rL\|_{\rL^2_\tau\rH^2(\Omega)}
    \|\nablaH\mathrm Z\|_{\rL^\infty_\tau\rH^1(\T^2)}
    +
    \|\mathrm Z\|_{\rL^\infty_\tau\rH^2(\T^2)}
    \|v^\rL\|_{\rL^\infty_\tau\rH^2(\Omega)}
    \|\nablaH\dt\mathrm Z\|_{\rL^2_\tau\rH^1(\T^2)}
    \big )
    \\
    &\leq C\eps^2 .
\end{aligned}
\]
Together with the previous \(\rL^2_\tau\rL^2\)-estimate, we get
\[
\begin{aligned}
    \big \|
        \frac{\mu}{\bar\rho_{\rd}^\ast\hat B^\ast}
        (\cL_1-\Delta)v^\rL
    \big \|_{\rH^1_\tau\rL^2(\Omega)}
    \leq C\eps^2 .
\end{aligned}
\]
As explained above, the same argument gives
\[
\begin{aligned}
    &\quad
    \big \|
        \frac{\mu}{\bar\rho_{\rd}^\ast\hat B^\ast}
        (\cL_1-\Delta)v^\rL
    \big \|_{\rH^1_\tau\rL^2(\Omega)}
    +
    \big \|
        \frac{\mu+\lambda}{\bar\rho_{\rd}^\ast\hat B^\ast}
        (\cL_2-\nablaH\divH)v^\rL
    \big \|_{\rH^1_\tau\rL^2(\Omega)}
    \leq C\eps^2 .
\end{aligned}
\]
We finally consider the pressure contribution. Again, we estimate only the
representative term
\[
    \frac{1}{\bar\rho_{\rd}^\ast\hat B^\ast}
    \nablaH
    \big(
        \bar\rho_{\rd}^\rL \hat B^\ast T^\rL
    \big).
\]
Using the product rule and the fact that \(\hat B^\ast\) is a fixed smooth
multiplier, we have
\[
\begin{aligned}
    \dt
    \big[
        \frac{1}{\bar\rho_{\rd}^\ast\hat B^\ast}
        \nablaH
        \big(
            \bar\rho_{\rd}^\rL \hat B^\ast T^\rL
        \big)
    \big]
    =
    \frac{1}{\bar\rho_{\rd}^\ast\hat B^\ast}
    \nablaH
    \big(
        \dt\bar\rho_{\rd}^\rL \hat B^\ast T^\rL
    \big)
    +
    \frac{1}{\bar\rho_{\rd}^\ast\hat B^\ast}
    \nablaH
    \big(
        \bar\rho_{\rd}^\rL \hat B^\ast \dt T^\rL
    \big).
\end{aligned}
\]
Therefore,
\[
\begin{aligned}
    &\quad
    \big \|
        \dt
        \big[
            \frac{1}{\bar\rho_{\rd}^\ast\hat B^\ast}
            \nablaH
            \big(
                \bar\rho_{\rd}^\rL \hat B^\ast T^\rL
            \big)
        \big]
    \big \|_{\rL^2_\tau\rL^2(\Omega)}
    \\
    &\leq
    C\big (
    \|\nablaH\dt\bar\rho_{\rd}^\rL\|_{\rL^2_\tau\rL^2(\T^2)}
    \|T^\rL\|_{\rL^\infty_\tau\rL^\infty(\Omega)}
    +
    \|\dt\bar\rho_{\rd}^\rL\|_{\rL^2_\tau\rL^6(\T^2)}
    \|T^\rL\|_{\rL^\infty_\tau\rL^3(\Omega)}
    \\
    &\quad
    +
    \|\dt\bar\rho_{\rd}^\rL\|_{\rL^2_\tau\rL^6(\T^2)}
    \|\nablaH T^\rL\|_{\rL^\infty_\tau\rL^3(\Omega)}
    +
    \|\nablaH\bar\rho_{\rd}^\rL\|_{\rL^\infty_\tau\rL^6(\T^2)}
    \|\dt T^\rL\|_{\rL^2_\tau\rL^3(\Omega)}
    \\
    &\quad
    +
    \|\bar\rho_{\rd}^\rL\|_{\rL^\infty_\tau\rL^\infty(\T^2)}
    \|\dt T^\rL\|_{\rL^2_\tau\rH^1(\Omega)}
    \big )
    \\
    &\leq
    C
    \big (
        \|\dt\bar\rho_{\rd}^\rL\|_{\rL^2_\tau\rH^1(\T^2)}
        \|T^\rL\|_{\rL^\infty_\tau\rH^2(\Omega)}
        +
        \|\bar\rho_{\rd}^\rL\|_{\rL^\infty_\tau\rH^2(\T^2)}
        \|\dt T^\rL\|_{\rL^2_\tau\rH^1(\Omega)}
    \big )
    \\
    &\leq C\eps^2 .
\end{aligned}
\]
Together with the previous \(\rL^2_\tau\rL^2\)-estimate, this yields
\[
\begin{aligned}
    \big \|
        \frac{1}{\bar\rho_{\rd}^\ast\hat B^\ast}
        \nablaH
        \big(
            \bar\rho_{\rd}^\rL \hat B^\ast T^\rL
        \big)
    \big \|_{\rH^1_\tau\rL^2(\Omega)}
    \leq C\eps^2 .
\end{aligned}
\]
All other quadratic pressure terms, as well as the geometric pressure term
containing \(\mathrm Z^\top-\mathrm I_2\), are treated by the same product-rule
estimate.
Combining the estimates above gives
\[
    \|G_v\|_{\E^v_0(0,\tau)}
    \leq C\eps^2 .
\]
We finally estimate one representative term containing the positive part. This
is the only additional point which does not occur in the purely smooth product
estimates above. We use that the map
\(
    f \mapsto f^+:=\max\{f,0\}
\)
is Lipschitz on \(\rL^2(\Omega)\) and stable on \(\rH^1(\Omega)\). Moreover,
\[
    \nabla f^+
    =
    \mathbf 1_{\{f>0\}}\nabla f
    \quad\text{a.e. in }\Omega .
\]
Thus, we obtain
\[
    \|(q^\rL_{\rv})^+\|_{\rH^1_\tau\rH^1(\Omega) \cap \rL^\infty_\tau\rL^\infty(\Omega)}
    \leq C\eps .
\]
We now estimate
\[
    (T^\ast+T^\rL)
    \frac{1+q_{\rv}^\rL}{1+Q_{\rm}^\rL}
    (-q_{\rv}^\rL)^+
    q_{\rr}^\rL .
\]
Note that
\[
    \|\frac{1+q_{\rv}^\rL}{1+Q_{\rm}^\rL}\|_{\rL^\infty_\tau\rH^2(\Omega)}
    +
    \|\dt \big [\frac{1+q_{\rv}^\rL}{1+Q_{\rm}^\rL} \big ]\|_{\rL^2_\tau\rH^1(\Omega)}
    \leq C .
\]
Setting $R^\rL:=\frac{1+q_{\rv}^\rL}{1+Q_{\rm}^\rL}$ and
using the product rule, we get
\[
\begin{aligned}
    \nabla
    \big[
        (T^\ast+T^\rL)
        R^\rL
        (q^\rL_{\rv})^+
        q_{\rr}^\rL
    \big]&=
    \nabla(T^\ast+T^\rL)
    R^\rL
    (q^\rL_{\rv})^+
    q_{\rr}^\rL
    +
    (T^\ast+T^\rL)
    \nabla R^\rL
    (q^\rL_{\rv})^+
    q_{\rr}^\rL
    \\
    &\quad
    +
    (T^\ast+T^\rL)
    R^\rL
    \nabla(q^\rL_{\rv})^+
    q_{\rr}^\rL
    +
    (T^\ast+T^\rL)
    R^\rL
    (q^\rL_{\rv})^+
    \nabla q_{\rr}^\rL .
\end{aligned}
\]
Therefore,
\[
\begin{aligned}
    \big \|
        \nabla
        \big[
            (T^\ast+T^\rL)
            R^\rL
            (q^\rL_{\rv})^+
            q_{\rr}^\rL
        \big]
    \big \|_{\rL^2_\tau\rL^2(\Omega)}
    &\leq
    C \big (
    \|\nabla(T^\ast+T^\rL)\|_{\rL^\infty_\tau\rH^1(\Omega)}
    \|R^\rL\|_{\rL^\infty_\tau\rH^2(\Omega)}
    \|(q^\rL_{\rv})^+\|_{\rL^2_\tau\rH^1(\Omega)}
    \|q_{\rr}^\rL\|_{\rL^\infty_\tau\rH^2(\Omega)}
    \\
    &\quad
    +
    \|T^\ast+T^\rL\|_{\rL^\infty_\tau\rH^2(\Omega)}
    \|\nabla R^\rL\|_{\rL^\infty_\tau\rH^1(\Omega)}
    \|(q^\rL_{\rv})^+\|_{\rL^2_\tau\rH^1(\Omega)}
    \|q_{\rr}^\rL\|_{\rL^\infty_\tau\rH^2(\Omega)}
    \\
    &\quad
    +
    \|T^\ast+T^\rL\|_{\rL^\infty_\tau\rH^2(\Omega)}
    \|R^\rL\|_{\rL^\infty_\tau\rH^2(\Omega)}
    \|\nabla(q^\rL_{\rv})^+\|_{\rL^2_\tau\rL^2(\Omega)}
    \|q_{\rr}^\rL\|_{\rL^\infty_\tau\rH^2(\Omega)}
    \\
    &\quad
    +
    \|T^\ast+T^\rL\|_{\rL^\infty_\tau\rH^2(\Omega)}
    \|R^\rL\|_{\rL^\infty_\tau\rH^2(\Omega)}
    \|(q^\rL_{\rv})^+\|_{\rL^2_\tau\rH^1(\Omega)}
    \|\nabla q_{\rr}^\rL\|_{\rL^\infty_\tau\rH^1(\Omega)} \big )
    \\
    &\leq C\eps^2 .
\end{aligned}
\]
The lower-order term is estimated in the same way. Hence
\[
\begin{aligned}
    \big \|
        (T^\ast+T^\rL)
        \frac{1+q_{\rv}^\rL}{1+Q_{\rm}^\rL}
        (-q_{\rv}^\rL)^+
        q_{\rr}^\rL
    \big \|_{\rL^2_\tau\rH^1(\Omega)}
    \leq C\eps^2 .
\end{aligned}
\]
The \(\rH^1_\tau\rL^2\)-part is estimated analogously.
Consequently,
\[
\begin{aligned}
    &\quad
    \big \|
        (T^\ast+T^\rL)
        \frac{1+q_{\rv}^\rL}{1+Q_{\rm}^\rL}
        (-q_{\rv}^\rL)^+
        q_{\rr}^\rL
    \big \|_{\rL^2_\tau\rH^1(\Omega)\cap\rH^1_\tau\rL^2(\Omega)}
    \leq C\eps^2 .
\end{aligned}
\]

All remaining terms containing positive parts are estimated in the same way.
Together with the estimates above for the time derivative contribution, the
transport terms, the transformed viscous terms, and the pressure terms, this
yields the claimed estimates.
\end{proof}
\noindent
Finally, we derive estimates on the switch terms, which were neglected in
\autoref{lem: est nonlinear moist}. In the cutoff regime considered above, the
saturation mixing ratio vanishes, and the relevant switch term is simply
\(Q^+\). We therefore consider the auxiliary problem in $\Omega_\tau$
\begin{equation}
    \label{eq: auxillary for q cutoff}
    \dt Q-\Delta Q+Q^+=f,
    \qquad
    Q(0)=Q_0 ,
\end{equation}
supplemented with homogeneous Neumann boundary conditions. We assume that
\(Q\in\mathbb E_1^h(0,\tau)\) is a local solution. Similar estimates in the $\rLp$-framework are established in \cite{BHMZ:26a}.

\begin{prop}
    \label{prop: est on q cutoff}
    Let \(\tau>0\), and assume that
    \(
        f\in\mathbb E_0^h(0,\tau).
    \)
    Moreover, assume that the initial datum and the compatibility datum
    \[
        Q_1
        :=
        \Delta Q_0
        +
        f(0)
        -
        Q_0^+
    \]
    satisfy
    \begin{equation}
        \label{eq: assu q cutoff}
        \|f\|_{\mathbb E_0^h(0,\tau)}
        +
        \|Q_0\|_{\rH^2(\Omega)}
        +
        \|Q_1\|_{\rH^1(\Omega)}
        \leq
        C_0\eps^2 .
    \end{equation}
    Then there exists a constant \(C>0\), depending on \(C_0\) and \(\tau\),
    such that
    \[
        \|Q\|_{\mathbb E_1^h(0,\tau)}
        \leq C\eps^2 .
    \]
\end{prop}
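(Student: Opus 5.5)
Since \(Q^+\) is only Lipschitz, we plan to treat it as a forcing term. We move \(Q^+\) to the right-hand side and apply the linear theory, \autoref{lem: linear max reg}, restricted to a single Neumann heat equation:
\[
\|Q\|_{\mathbb E_1^h(0,\tau)}\le C\bigl(\|f-Q^+\|_{\mathbb E_0^h(0,\tau)}+\|Q_0\|_{\rH^2(\Omega)}+\|Q_1\|_{\rH^1(\Omega)}\bigr).
\]
The compatibility datum of that linear problem is exactly \(Q_1=\Delta Q_0+f(0)-Q_0^+\). Hence everything reduces to bounding \(\|Q^+\|_{\mathbb E_0^h(0,\tau)}\) by \(C\eps^2\). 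The term \(Q^+\) is of first order in \(Q\), so it cannot be absorbed through smallness. Instead we will use the damping sign of \(Q^+\) in energy estimates at the lower regularity level \(\mathbb E_0^h\).

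First we carry out the \(\rL^\infty_\tau\rH^1\cap\rL^2_\tau\rH^2\) energy estimates. We test the equation with \(Q\) and \(-\Delta Q\). This uses \(\int Q^+Q=\|Q^+\|_2^2\ge0\) and \(\int \nabla Q^+\cdot\nabla Q=\|\nabla Q^+\|_2^2\ge0\), with the Neumann condition giving \(\int Q^+(-\Delta Q)=\int\mathbf 1_{\{Q>0\}}|\nabla Q|^2\). The result is
\[
\sup_t\|Q\|_{\rH^1}^2+\|Q\|_{\rL^2_\tau\rH^2}^2\le C\bigl(\|Q_0\|_{\rH^1}^2+\|f\|_{\rL^2_\tau\rL^2}^2\bigr)\le C\eps^4 .
\]
Since \(f\mapsto f^+\) is stable on \(\rH^1\), this already gives \(\|Q^+\|_{\rL^2_\tau\rH^1}\le \|Q\|_{\rL^2_\tau\rH^1}\le C\eps^2\).

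Next we control the time derivative. Because \(Q\in\mathbb E_1^h\), we have \(R:=\dt Q\in\rL^2_\tau\rH^2\cap\rH^1_\tau\rL^2\), and formally \(R\) solves
\[
\dt R-\Delta R+\mathbf 1_{\{Q>0\}}R=\dt f,\qquad R(0)=Q_1 .
\]
To justify this we use that \(\dt(Q^+)=\mathbf 1_{\{Q>0\}}\dt Q\) a.e. for \(Q\in\rH^1_\tau\rL^2\). Testing with \(R\), the zero-order term is nonnegative, so
\[
\sup_t\|R\|_{\rL^2}^2+\|R\|_{\rL^2_\tau\rH^1}^2\le C\bigl(\|Q_1\|_{\rL^2}^2+\|\dt f\|_{\rL^2_\tau\rL^2}^2\bigr)\le C\eps^4 .
\]
If necessary, one can avoid differentiating in time by testing difference quotients in time instead. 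Then \(\|\dt Q^+\|_{\rL^2_\tau\rL^2}\le\|\dt Q\|_{\rL^2_\tau\rL^2}\le C\eps^2\), so \(\|Q^+\|_{\mathbb E_0^h(0,\tau)}\le C\eps^2\). Inserting this into the maximal regularity estimate gives \(\|Q\|_{\mathbb E_1^h(0,\tau)}\le C\eps^2\).

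The main obstacle is the rigorous justification of the differentiated equation, or equivalently of the time-derivative bound on \(Q^+\), because the coefficient \(\mathbf 1_{\{Q>0\}}\) is discontinuous. We would handle it with difference quotients \(\delta_hQ=(Q(\cdot+h)-Q)/h\), which solve a linear equation with a coefficient \(c_h\in[0,1]\) by monotonicity and Lipschitz continuity of \(x\mapsto x^+\). The \(\rL^2\) energy estimate for \(\delta_hQ\) is then uniform in \(h\), and we pass to the limit as \(h\to0\).
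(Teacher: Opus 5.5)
Your proof is correct. It has the same architecture as the paper's proof:
\begin{itemize}
\item energy estimates that exploit the sign of \(Q^+\);
\item the bound \(\|Q^+\|_{\mathbb E_0^h(0,\tau)}\le C\eps^2\), obtained from \(\nabla Q^+=\mathbf 1_{\{Q>0\}}\nabla Q\) and \(\dt Q^+=\mathbf 1_{\{Q>0\}}\dt Q\);
\item \autoref{lem: linear max reg} applied to \(\dt Q-\Delta Q=f-Q^+\).
\end{itemize}

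The difference is how you control \(\dt Q\). The paper tests the undifferentiated equation with \(\dt Q\) and \(-\Delta Q\). It uses that \(Q^+\dt Q=\tfrac12\dt (Q^+)^2\) is an exact time derivative, so only \(\|f\|_{\rL^2_\tau\rL^2}\) and \(\|Q_0\|_{\rH^1}\) enter. You instead differentiate in time and use \(\mathbf 1_{\{Q>0\}}\ge 0\). This needs \(\|\dt f\|_{\rL^2_\tau\rL^2}\) and \(\|Q_1\|_{\rL^2}\), and it gives the stronger information \(\dt Q\in\rL^\infty_\tau\rL^2\cap\rL^2_\tau\rH^1\). That extra information is not needed, because the final maximal-regularity step recovers the full \(\mathbb E_1^h\) norm anyway.

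In fact you can drop your differentiated step entirely. After testing with \(Q\) and \(-\Delta Q\), the equation itself gives \(\|\dt Q\|_{\rL^2_\tau\rL^2}\le\|\Delta Q\|_{\rL^2_\tau\rL^2}+\|Q^+\|_{\rL^2_\tau\rL^2}+\|f\|_{\rL^2_\tau\rL^2}\le C\eps^2\).

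The obstacle you flag is also not a real one here, since \(Q\in\mathbb E_1^h(0,\tau)\) is assumed from the start. Then \(Q\) lies in \(\rH^1\) of the space-time cylinder, and the Lipschitz chain rule gives \(\dt Q^+=\mathbf 1_{\{Q>0\}}\dt Q\) a.e. Every term of the equation lies in \(\rH^1(0,\tau;\rL^2(\Omega))\), so the differentiated equation holds in \(\rL^2_\tau\rL^2\) without difference quotients. Difference quotients would only be needed if that a priori regularity were not available.
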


\begin{proof}
Multiplying  \eqref{eq: auxillary for q cutoff}  by \(Q\) and integrating in \(\Omega\), we obtain
\[
    \frac{1}{2}\dt\|Q\|_2^2
    +
    \|\nabla Q\|_2^2
    +
    \int_\Omega Q^+Q\,\rd x
    =
    \int_\Omega fQ\,\rd x .
\]
Since
\[
    Q^+Q=(Q^+)^2\geq 0,
\]
we have
\[
    \frac{1}{2}\dt\|Q\|_2^2
    +
    \|\nabla Q\|_2^2
    \leq
    \int_\Omega fQ\,\rd x .
\]
By Hölder's and Young's inequalities,
\[
    \frac{1}{2}\dt\|Q\|_2^2
    +
    \|\nabla Q\|_2^2
    \leq
    C\|f\|_2^2
    +
    C\|Q\|_2^2 .
\]
An integration in time, Gronwall's inequality, and the assumption on the data
yield
\[
    \|Q\|_{\rL^\infty_\tau\rL^2(\Omega)}
    +
    \|\nabla Q\|_{\rL^2_\tau\rL^2(\Omega)}
    \leq
    C\eps^2 .
\]
Next, multiply \eqref{eq: auxillary for q cutoff} by \(\dt Q\). Then
\[
    \|\dt Q\|_2^2
    +
    \frac{1}{2}\dt\|\nabla Q\|_2^2
    +
    \int_\Omega Q^+\dt Q\,\rd x
    =
    \int_\Omega f\dt Q\,\rd x .
\]
Since
\[
    \int_\Omega Q^+\dt Q\,\rd x
    =
    \frac{1}{2}\dt\|Q^+\|_2^2,
\]
we obtain
\[
    \|\dt Q\|_2^2
    +
    \frac{1}{2}\dt\|\nabla Q\|_2^2
    +
    \frac{1}{2}\dt\|Q^+\|_2^2
    =
    \int_\Omega f\dt Q\,\rd x .
\]
Similarly, multiplying \eqref{eq: auxillary for q cutoff} by \(-\Delta Q\)
and integrating by parts gives
\[
    \frac{1}{2}\dt\|\nabla Q\|_2^2
    +
    \|\Delta Q\|_2^2
    +
    \int_\Omega \nabla Q^+\cdot\nabla Q\,\rd x
    =
    -\int_\Omega f\Delta Q\,\rd x .
\]
Using
\[
    \nabla Q^+
    =
    \mathbf 1_{\{Q>0\}}\nabla Q
    \quad\text{a.e.},
\]
we have
\[
    \int_\Omega \nabla Q^+\cdot\nabla Q\,\rd x
    =
    \|\nabla Q^+\|_2^2
    \geq 0.
\]
Adding the two identities yields
\[
\begin{aligned}
    \|\dt Q\|_2^2
    +
    \|\Delta Q\|_2^2
    +
    \dt\|\nabla Q\|_2^2
    +
    \frac{1}{2}\dt\|Q^+\|_2^2
    +
    \|\nabla Q^+\|_2^2
    =
    \int_\Omega
    f
    \bigl(\dt Q-\Delta Q\bigr)
    \,\rd x .
\end{aligned}
\]
By Hölder's and Young's inequalities,
\[
\begin{aligned}
    \big |
    \int_\Omega
    f
    \bigl(\dt Q-\Delta Q\bigr)
    \,\rd x
    \big |
\leq
    \frac{1}{2}\|\dt Q\|_2^2
    +
    \frac{1}{2}\|\Delta Q\|_2^2
    +
    C\|f\|_2^2 .
\end{aligned}
\]
Hence, after integration in time,
\[
    \|Q\|_{\rL^2_\tau\rH^2(\Omega)}
    +
    \|Q\|_{\rH^1_\tau\rL^2(\Omega)}
    \leq
    C\eps^2 .
\]
It remains to estimate the switch term in \(\mathbb E_0^h(0,\tau)\). The
positive part map is Lipschitz and satisfies
\[
    \nabla Q^+
    =
    \mathbf 1_{\{Q>0\}}\nabla Q
    \quad\text{a.e.}
\]
Hence
\[
    \|Q^+\|_{\rL^2_\tau\rH^1(\Omega)}
    \leq
    \|Q\|_{\rL^2_\tau\rH^1(\Omega)}
    \leq
    C\eps^2 .
\]
Similarly,
\[
    \dt Q^+
    =
    \mathbf 1_{\{Q>0\}}\dt Q
    \quad\text{a.e.},
\]
and therefore
\[
    \|\dt Q^+\|_{\rL^2_\tau\rL^2(\Omega)}
    \leq
    \|\dt Q\|_{\rL^2_\tau\rL^2(\Omega)}
    \leq
    C\eps^2 .
\]
Combining these two estimates gives
\[
    \|Q^+\|_{\mathbb E_0^h(0,\tau)}
    =
    \|Q^+\|_{\rL^2_\tau\rH^1(\Omega)\cap\rH^1_\tau\rL^2(\Omega)}
    \leq
    C\eps^2 .
\]
Finally, rewriting \eqref{eq: auxillary for q cutoff} as
\[
    \dt Q-\Delta Q
    =
    f-Q^+,
\]
\autoref{lem: linear max reg} yields
\[
\begin{aligned}
    \|Q\|_{\mathbb E_1^h(0,\tau)}
    &\leq
    C
    \bigl(
        \|f-Q^+\|_{\mathbb E_0^h(0,\tau)}
        +
        \|Q_0\|_{\rH^2(\Omega)}
        +
        \|Q_1\|_{\rH^1(\Omega)}
    \bigr)
    \\
    &\leq
    C
    \bigl(
        \|f\|_{\mathbb E_0^h(0,\tau)}
        +
        \|Q^+\|_{\mathbb E_0^h(0,\tau)}
        +
        \|Q_0\|_{\rH^2(\Omega)}
        +
        \|Q_1\|_{\rH^1(\Omega)}
    \bigr)
    \\
    &\leq C\eps^2 .
\end{aligned}
\]
This proves the claim.
\end{proof}
\noindent
We observe that $s\mapsto s^+ $ is Lipschitz continuous but not differentiable at $s=0$. Thus, the term $Q^+$ requires special care in the contraction argument.
 The following corollary provides the required stability estimate.

\begin{cor}
    \label{cor: difference est q cutoff}
    Let \(\tau>0\), and let \(Q^{(i)}\in\mathbb E_1^h(0,\tau)\),
    \(i=1,2\), be local solutions of
    \[
         \dt Q^{(i)}-\Delta Q^{(i)}+\bigl(Q^{(i)}\bigr)^+
        =
         f_i,
         \qquad
         Q^{(i)}(0)=Q_0,
     \]
     supplemented with homogeneous Neumann boundary conditions. Set
     \[
         \bar Q:=Q^{(1)}-Q^{(2)},
        \qquad
         \bar f:=f_1-f_2, .
    \]
    Then there exists a constant \(C>0\), depending on \(\tau\), such that
    \[
         \|\bar Q\|_{\rH^1_\tau\rL^2(\Omega)}
         +
         \|\bar Q\|_{\rL^2_\tau\rH^2(\Omega)}
         +
         \|\bar Q\|_{\rL^\infty_\tau\rH^1(\Omega)}
         \leq
         C
             \|\bar f\|_{\rL^2_\tau\rL^2(\Omega)}
           .
    \]
 \end{cor}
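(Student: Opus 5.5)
The plan is to subtract the two equations and treat the switch term as a bounded, monotone zeroth-order coefficient. Set
\[
a:=\frac{(Q^{(1)})^+-(Q^{(2)})^+}{Q^{(1)}-Q^{(2)}}\ \text{ on }\{\bar Q\neq0\},\qquad a:=0 \text{ otherwise}.
\]
Since $s\mapsto s^+$ is monotone and $1$-Lipschitz, $0\le a\le 1$ a.e. The difference then satisfies
\[
\dt\bar Q-\Delta\bar Q+a\bar Q=\bar f,\qquad \bar Q(0)=0,
\]
with homogeneous Neumann conditions. No derivative of $a$ will ever be needed, and this is the point: the estimate must avoid differentiating the switch term. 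That is also why the right-hand side only contains $\|\bar f\|_{\rL^2_\tau\rL^2}$ and the left-hand side only reaches $\rL^2_\tau\rH^2\cap\rH^1_\tau\rL^2$, not the full $\mathbb E_1^h$-norm.

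First, test with $\bar Q$. Because $a\bar Q^2\ge0$, this gives
\[
\tfrac12\dt\|\bar Q\|_2^2+\|\nabla\bar Q\|_2^2\le \tfrac12\|\bar f\|_2^2+\tfrac12\|\bar Q\|_2^2 .
\]
Gronwall's inequality with $\bar Q(0)=0$ then yields
\[
\|\bar Q\|_{\rL^\infty_\tau\rL^2}+\|\nabla\bar Q\|_{\rL^2_\tau\rL^2}\le C(\tau)\|\bar f\|_{\rL^2_\tau\rL^2}.
\]

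Second, test with $\dt\bar Q-\Delta\bar Q$. This is admissible for $Q^{(i)}\in\mathbb E_1^h$, and the Neumann condition kills the boundary terms in the cross term $\int\dt\bar Q\,(-\Delta\bar Q)=\tfrac12\dt\|\nabla\bar Q\|_2^2$. We obtain
\[
\|\dt\bar Q\|_2^2+\dt\|\nabla\bar Q\|_2^2+\|\Delta\bar Q\|_2^2=\int_\Omega(\bar f-a\bar Q)(\dt\bar Q-\Delta\bar Q)\,\rd x .
\]
The right-hand side is handled by Young's inequality together with $|a\bar Q|\le|\bar Q|$. Absorbing half of $\|\dt\bar Q\|_2^2+\|\Delta\bar Q\|_2^2$, we are left with $C(\|\bar f\|_2^2+\|\bar Q\|_2^2)$. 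Unlike in Proposition \ref{prop: est on q cutoff}, we deliberately do not use the sign structure of the $-\Delta$ test against $Q^+$, since it is not available for differences. The lower-order term $\|\bar Q\|_2^2$ is already controlled by the first step.

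Integrating in time then gives bounds for $\|\dt\bar Q\|_{\rL^2_\tau\rL^2}$, $\|\nabla\bar Q\|_{\rL^\infty_\tau\rL^2}$ and $\|\Delta\bar Q\|_{\rL^2_\tau\rL^2}$ by $C\|\bar f\|_{\rL^2_\tau\rL^2}$. Elliptic regularity for the Neumann Laplacian on $\Omega=\T^2\times(0,1)$ gives $\|\bar Q\|_{\rH^2}\le C(\|\Delta\bar Q\|_2+\|\bar Q\|_2)$, which yields the $\rL^2_\tau\rH^2$-bound. Combining this with the $\rL^\infty_\tau\rL^2$ bound gives the $\rL^\infty_\tau\rH^1$-bound.

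The only delicate points are technical rather than conceptual. One must justify that $a$ is measurable and bounded, which is immediate from the formula. One must also justify the testing, which holds because $\bar Q\in\rH^1_\tau\rL^2\cap\rL^2_\tau\rH^2$ is regular enough for the Lions–Magenes identity $\int_0^t\langle\dt\bar Q,-\Delta\bar Q\rangle=\tfrac12\|\nabla\bar Q(t)\|_2^2$. I expect no real obstacle beyond keeping the estimate free of any derivative of the switch term.
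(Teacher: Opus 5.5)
Your proposal is correct and follows the paper's proof essentially step by step: subtract the equations, test with $\bar Q$ using monotonicity of the positive part and Gronwall, then test with $\dt\bar Q$ and $-\Delta\bar Q$ using only the $\rL^2$-Lipschitz bound $\|(Q^{(1)})^+-(Q^{(2)})^+\|_2\le\|\bar Q\|_2$. The remaining differences are cosmetic: you merge the last two tests into a single test with $\dt\bar Q-\Delta\bar Q$, and you make explicit the Neumann elliptic-regularity step from $\|\Delta\bar Q\|_2$ to the $\rL^2_\tau\rH^2$-norm, which the paper leaves implicit.
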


 \begin{proof}
 Subtracting the two equations gives
 \[
     \dt \bar Q-\Delta \bar Q
     +
     \bigl(Q^{(1)}\bigr)^+
     -
     \bigl(Q^{(2)}\bigr)^+
     =
    \bar f,
     \qquad
     \bar Q(0)=0 .
 \]
 Testing this equation with \(\bar Q\), we obtain
 \[
     \frac12\dt\|\bar Q\|_2^2
     +
    \|\nabla\bar Q\|_2^2
     +
     \int_\Omega
         \Big(
             \bigl(Q^{(1)}\bigr)^+
             -
             \bigl(Q^{(2)}\bigr)^+
         \Big)
         \bar Q
     \,\rd x
     =
     \int_\Omega \bar f \,\bar Q\,\rd x .
 \]
 Since the positive part is monotone, we have
 \[
     \Big(
        \bigl(Q^{(1)}\bigr)^+
         -
         \bigl(Q^{(2)}\bigr)^+
     \Big)
     \bigl(Q^{(1)}-Q^{(2)}\bigr)
     \geq 0 .
 \]
 Hence
 \[
     \frac12\dt\|\bar Q\|_2^2
     +
     \|\nabla\bar Q\|_2^2
     \leq
     \int_\Omega \bar f\bar Q\,\rd x .
 \]
 By Hölder's and Young's inequalities,
 \[
     \frac12\dt\|\bar Q\|_2^2
     +
     \|\nabla\bar Q\|_2^2
     \leq
     C\|\bar f\|_2^2
     +
    C\|\bar Q\|_2^2 .
 \]
 An integration in time and Gronwall's inequality yield
 \[
     \|\bar Q\|_{\rL^\infty_\tau\rL^2(\Omega)}
     +
     \|\nabla\bar Q\|_{\rL^2_\tau\rL^2(\Omega)}
    \leq
     C
         \|\bar f\|_{\rL^2_\tau\rL^2(\Omega)}
         .
 \]

 Next, we test the difference equation with \(\dt\bar Q\). This gives
 \[
 \begin{aligned}
    \|\dt\bar Q\|_2^2
    +
    \frac12\dt\|\nabla\bar Q\|_2^2
     &=
     \int_\Omega \bar f\dt\bar Q\,\rd x
     -
     \int_\Omega
        \Big(
             \bigl(Q^{(1)}\bigr)^+
             -
             \bigl(Q^{(2)}\bigr)^+
         \Big)
        \dt\bar Q
     \,\rd x .
 \end{aligned}
 \]
 Using the Lipschitz continuity of the positive part in \(\rL^2(\Omega)\), we
 have
 \[
     \big \|
         \bigl(Q^{(1)}\bigr)^+
         -
         \bigl(Q^{(2)}\bigr)^+
     \big \|_2
     \leq
     \|\bar Q\|_2 .
 \]
 Therefore, by Hölder's and Young's inequalities,
 \[
 \begin{aligned}
     \|\dt\bar Q\|_2^2
    +
     \dt\|\nabla\bar Q\|_2^2
    \leq
    C\|\bar f\|_2^2
     +
     C\|\bar Q\|_2^2 .
 \end{aligned}
 \]
 Integrating in time and using the previous estimate, we obtain
 \[
     \|\dt\bar Q\|_{\rL^2_\tau\rL^2(\Omega)}
     +
     \|\nabla\bar Q\|_{\rL^\infty_\tau\rL^2(\Omega)}
     \leq
     C
         \|\bar f\|_{\rL^2_\tau\rL^2(\Omega)}.
 \]

 Similarly, testing the difference equation with \(-\Delta\bar Q\) gives
 \[
 \begin{aligned}
     \frac12\dt\|\nabla\bar Q\|_2^2
     +
    \|\Delta\bar Q\|_2^2
    &=
     -\int_\Omega \bar f\Delta\bar Q\,\rd x
     +
    \int_\Omega
         \Big(
             \bigl(Q^{(1)}\bigr)^+
            -
            \bigl(Q^{(2)}\bigr)^+
         \Big)
         \Delta\bar Q
     \,\rd x .
 \end{aligned}
 \]
 Again, using
 \[
     \big \|
         \bigl(Q^{(1)}\bigr)^+
         -
         \bigl(Q^{(2)}\bigr)^+
    \big \|_2
     \leq
    \|\bar Q\|_2 ,
 \]
we infer
 \[
     \dt\|\nabla\bar Q\|_2^2
     +
     \|\Delta\bar Q\|_2^2
     \leq
     C\|\bar f\|_2^2
    +
   C\|\bar Q\|_2^2 .
 \]
 After integration in time, the lower-order estimate gives
 \[
     \|\Delta\bar Q\|_{\rL^2_\tau\rL^2(\Omega)}
     \leq
     C
         \|\bar f\|_{\rL^2_\tau\rL^2(\Omega)} 
   .
 \]
 Combining the preceding bounds, we conclude that
 \[
     \|\bar Q\|_{\rH^1_\tau\rL^2(\Omega)}
     +
     \|\bar Q\|_{\rL^2_\tau\rH^2(\Omega)}
     +
    \|\bar Q\|_{\rL^\infty_\tau\rH^1(\Omega)}
     \leq
     C
         \|\bar f\|_{\rL^2_\tau\rL^2(\Omega)}
  .
 \]
 \end{proof}

\section{Proof of the Main Theorem}
\label{sec: proof}
\noindent
Finally, in this section, we assemble all the ingredients and prove the main
theorem by means of the contraction mapping principle.

\begin{proof}[Proof of \autoref{thm: global WP}]
Let \(\mathbb B_\eps(0,\tau)\) be the solution ball introduced in \autoref{lem: est nonlinear moist}. Given
\[
    U_1
    =
    (\bar\rho_{\rd,1}^\rL,v_1^\rL,T_1^\rL,
    q_{\rv,1}^\rL,q_{\rc,1}^\rL,q_{\rr,1}^\rL)
    \in \mathbb B_\eps(0,\tau),
\]
we first solve the moisture equations. The
non-quadratic switch term is kept on the left-hand side of the vapor equation.
Thus the relevant equation has the form
\[
    \dt q_{\rv}^\rL-\Delta q_{\rv}^\rL+(q_{\rv}^\rL)^+
    =
    h_{\rv}(U_1).
\]
The remaining moisture equations are solved with the corresponding right-hand
sides \(h_{\rc}(U_1)\) and \(h_{\rr}(U_1)\). By the nonlinear estimates obtained
above,
\[
    \|h_j(U_1)\|_{\mathbb E_0^h(0,\tau)}
    \leq
    C\eps^2,
    \qquad
    j\in\{\rv,\rc,\rr\}.
\]
We assume that the initial and compatibility data satisfy
\[
    \sum_{j\in\{\rv,\rc,\rr\}}
    \big(
        \|q_{j,0}\|_{\rH^2(\Omega)}
        +
        \|\dt q_j(0)\|_{\rH^1(\Omega)}
    \big)
    \leq
    C_0\eps^2 .
\]
Hence, by the auxiliary estimate for the switch equation \eqref{eq: auxillary for q cutoff}, we obtain
\[
    \sum_{j\in\{\rv,\rc,\rr\}}
    \|q_j^\rL\|_{\mathbb E_1^h(0,\tau)}
    \leq
    C\eps^2 .
\]
In particular,
\[
    \sum_{j\in\{\rv,\rc,\rr\}}
    \|\dt q_j^\rL\|_{\mathbb E_0^h(0,\tau)}
    \leq
    C\eps^2 .
\]
We now insert the resulting moisture variables into the temperature equation.
The only point which was not covered by the genuinely quadratic nonlinear
estimate is the linear moisture-time contribution $\mathcal Q_T$ as well as the additional term appearing in $G_v$. Recall
\[
    \mathcal Q_T[\dt Q]
    :=
    \frac{T^\ast}{\hat B^\ast}
    D\hat B^\ast
    \bigl[
        0,\dt(q_{\rv}^\rL,q_{\rc}^\rL,q_{\rr}^\rL)
    \bigr]
    +
    \frac{1}{\hat B^\ast}
    \int_0^z
    D\hat B^\ast
    \bigl[
        0,\dt(q_{\rv}^\rL,q_{\rc}^\rL,q_{\rr}^\rL)
    \bigr](\cdot,\eta)\,\rd\eta .
\]
Using the boundedness of \(D\hat B^\ast\) and of vertical integration, we get
\[
\begin{aligned}
    \|\mathcal Q_T[\dt Q]\|_{\mathbb E_0^h(0,\tau)}
    \leq
    C
    \sum_{j\in\{\rv,\rc,\rr\}}
    \|\dt q_j^\rL\|_{\mathbb E_0^h(0,\tau)}
 \leq C\eps^2 .
\end{aligned}
\]
Moreover, since
\[
    \|(q_{\rv}^\rL)^+\|_{\mathbb E_0^h(0,\tau)}
    \leq
    \|q_{\rv}^\rL\|_{\mathbb E_0^h(0,\tau)}
    \leq
    C\eps^2 ,
\]
the full temperature remainder satisfies
\[
\begin{aligned}
    \|G_T\|_{\mathbb E_0^h(0,\tau)}
    \leq
    \|\widetilde G_T\|_{\mathbb E_0^h(0,\tau)}
    +
    \|\mathcal Q_T[\dt Q]\|_{\mathbb E_0^h(0,\tau)}
    +
    \|(q_{\rv}^\rL)^+\|_{\mathbb E_0^h(0,\tau)}
 \leq C\eps^2 .
\end{aligned}
\]
Analogously one shows that 
\begin{equation*}
    \| G_v \|_{\E_0^v(0,\tau)} \leq C\eps^2.
\end{equation*}
We now solve the linearized \((\bar\rho_{\rd},v,T)\)-system with these
right-hand sides. We define
\[
    \Psi(U_1)
    :=
    (\bar\rho_{\rd}^\rL,v^\rL,T^\rL,
    q_{\rv}^\rL,q_{\rc}^\rL,q_{\rr}^\rL),
\]
where \((q_{\rv}^\rL,q_{\rc}^\rL,q_{\rr}^\rL)\) are obtained from the moisture
step above, and where \((\bar\rho_{\rd}^\rL,v^\rL,T^\rL)\) denotes the unique
strong solution of the linearized \((\bar\rho_{\rd},v,T)\)-problem with
right-hand sides \((G_\rho,G_v,G_T)\).
We next verify that \(\Psi\) maps \(\mathbb B_\eps(0,\tau)\) into itself.
By the moisture estimate derived above, we have
\[
    \sum_{j\in\{\rv,\rc,\rr\}}
    \|q_j^\rL\|_{\mathbb E_1^h(0,\tau)}
    \leq
    C\eps^2 .
\]
Moreover, the nonlinear estimates \autoref{lem: est nonlinear moist}
and the linear maximal-regularity estimate \autoref{lem: linear max reg} yield,
\[
\begin{aligned}
 &\quad   \|\bar\rho_{\rd}^\rL\|_{\mathbb E_1^\rho(0,\tau)}
    +
    \|v^\rL\|_{\mathbb E_1^v(0,\tau)}
    +
    \|T^\rL\|_{\mathbb E_1^h(0,\tau)}
\\ &\leq
    C
    \big (
        \|G_\rho\|_{\mathbb E_0^\rho(0,\tau)}
        +
        \|G_v\|_{\E_0^v(0,\tau)}
        +
        \|G_T\|_{\mathbb E_0^h(0,\tau)}
        +
        \|\bar\rho_{\rd,0} - \bar \rho^\ast_\rd\|_{\rH^2(\T^2)}
        +
        \|v_0\|_{\rH^2(\Omega)}
        +
        \|T_0 - T^\ast\|_{\rH^2(\Omega)}
    \big ) \\ &\leq
    C\eps^2 .
\end{aligned}
\]
Combining this with the moisture estimate yields
\[
    \|\Psi(U_1)\|_{\mathbb E_1(0,\tau)}
    \leq
    C\eps^2 .
\]
Thus, choosing \(\eps>0\) sufficiently small, $\Psi$
is a self-map. In the help of \autoref{cor: difference est q cutoff},
the contraction property is proved in the same way in the less regular space 
\begin{equation*}
    \rL^2(0,\tau; \rH^1(\T^2)) \times \rL^2(0,\tau; \rH^2(\Omega;\R^2)) \cap \rH^1(0,\tau;\rL^2(\Omega;\R^2)) \times \big (\rL^2(0,\tau; \rH^2(\Omega)) \cap \rH^1(0,\tau;\rL^2(\Omega)) \big )^4.
\end{equation*}
Finally, since \(\eps>0\) is chosen sufficiently small,
\autoref{lem:ests of trafo moist} implies that the transform \(\rX\) is invertible
with inverse \(\mathrm Y\). Defining the pulled-back functions by
\[
    \bar\rho_{\rd}(t,\cdot)
    :=
    \bar\rho_{\rd}^\rL(t,\mathrm Y(t,\cdot))+ \bar{\rho}^\ast_\rd,
\]
\[
    v(t,\cdot,z)
    :=
    v^\rL(t,\mathrm Y(t,\cdot),z),
    \quad
    T(t,\cdot,z)
    :=
    T^\rL(t,\mathrm Y(t,\cdot),z)+T^\ast,
\]
and
\[
    q_j(t,\cdot,z)
    :=
    q_j^\rL(t,\mathrm Y(t,\cdot),z)
    \ \text{ for }
    j\in\{\rv,\rc,\rr\},
\]
and introducing the diagnostic quantities \(w\), \(p\), and \(\rho\) in terms of
\((\bar\rho_{\rd},v,T,q_{\rv},q_{\rc},q_{\rr})\) as described in
\autoref{sec:coupled model}, we readily verify that
\[
    (\rho,u=(v,w),T,q_{\rv},q_{\rc},q_{\rr})
\]
is the unique strong solution of the full moist compressible primitive
equations. This proves the theorem.
\end{proof}
\noindent
{\bf Acknowledgements}
Tarek Z\"ochling acknowledges the support by DFG project FOR~5528.
\medskip

\noindent
 {\bf Statements and Declarations}

\smallskip

\noindent
    {\bf Data Availability Statement} Data sharing is not applicable to this article as no datasets were generated or analyzed during the current study.

 \smallskip
\noindent
    {\bf Competing Interests} The authors have no Conflict of Interest to declare that is relevant to the content of the article.

\end{document}